\pgfplotsset{compat=1.18}
\newtheorem{theorem}{Theorem}[section] 
\newtheorem{lemma}[theorem]{Lemma} 
\newtheorem{proposition}{Proposition}[section]
\newtheorem{example}{Example}[section]    
\newtheorem{assumption}{Assumption}[section]
\newtheorem{remark}{Remark}[section]
\newtheorem{corollary}{Corollary}[section]
\newtheorem{definition}[theorem]{Definition} 
\title{\textbf{Torsion in Persistent Homology and Neural Networks}}
\author{
	\textbf{Maria Walch} \thanks{\texttt{walch@rptu.de, maria.walch@iav.de, maria.walch@igw.uni-heidelberg.de}} \\
	RPTU Kaiserslautern-Landau \\
	IAV GmbH \\
	Heidelberg University
}
\date{}
\begin{document}
	
	\maketitle
	
	\begin{abstract}
\textbf{We explore the role of torsion in hybrid deep learning models that incorporate topological data analysis, focusing on autoencoders. While most TDA tools use field coefficients, this conceals torsional features present in integer homology. We show that torsion can be lost during encoding, altered in the latent space, and in many cases, not reconstructed by standard decoders.
Using both synthetic and high-dimensional data, we evaluate torsion sensitivity to perturbations and assess its recoverability across several autoencoder architectures. Our findings reveal key limitations of field-based approaches and underline the need for architectures or loss terms that preserve torsional information for robust data representation.}
	\end{abstract}
	
\section{Introduction}

Topological data analysis (TDA) has become increasingly relevant across scientific disciplines, from cancer image analysis to theoretical particle physics \cite{Rabadán2020, thomas2022topological}. In parallel, several efficient and accessible TDA libraries have been developed, such as Ripser, GUDHI, and giotto-tda \cite{Bauer2021Ripser,giottotda,gudhi:urm}. With the growing dominance of neural networks in data-driven applications, TDA techniques have also found their way into machine learning pipelines, particularly in unsupervised settings using autoencoders.\\

\noindent Autoencoders have proven effective for nonlinear dimensionality reduction and unsupervised representation learning in a broad application field \cite{Finke2021Autoencoders, Badsha2020, fengbreast, hooliver}. Recent work has sought to enhance their topological sensitivity by incorporating TDA-inspired components — either through loss functions that reward topological similarity between input and output \cite{moor2021topologicalautoencoders, trofimov2023learningtopologypreservingdatarepresentations}, or through architectural modifications tailored to preserve topological structure \cite{carrière2020perslayneuralnetworklayer}.\\

\noindent A central topological feature that remains largely overlooked in this context is torsion. While torsion is often associated with abstract examples like the Möbius strip or Klein bottle, it also arises in applied contexts, such as tumor growth models \cite{amster2024topological}, Section 9.5 and the patch space of natural images \cite{Carlsson2008LocalBehavior}. Furthermore, recent work \cite{obayashi2023field, kahle2018cohenlenstraheuristicstorsionhomology} suggests that torsional features are likely to occur in high-dimensional data — the kind routinely encountered in deep learning applications.\\

\noindent However, torsion is invisible over field coefficients and can only be detected via integer (persistent) homology, as implied by the universal coefficient theorem (\cite{hatcher}, Theorem 3A.3 for homology). Since most hybrid approaches in deep learning rely on field coefficients for computational tractability, they inherently neglect torsional information. This omission is typically unacknowledged, and it is often assumed that the choice of coefficient ring has little or no impact on downstream tasks, explainability or interpretability.\\

\noindent In this work, we challenge that assumption. We show — both computationally and mathematically — that torsion is a fragile but significant topological feature whose presence exposes fundamental limitations of current TDA-enhanced autoencoders. Even architectures that explicitly incorporate TDA loss functions are unable to capture or reconstruct torsion reliably.

 \subsection{Contributions}
 
 Torsion remains a rarely addressed phenomenon in both computational and theoretical topology. Yet it plays a key role in dynamical systems theory, where it serves as a distinguishing feature — for instance, separating models of tumor growth from chaotic attractors such as the Rössler system \cite{amster2024topological}. Despite its relevance, torsion is often treated as a theoretical curiosity and is largely absent from practical applications of topological data analysis (TDA), particularly in deep learning.\\
 \noindent Autoencoders, as nonlinear tools for representation learning, offer a promising interface with TDA. They enable topological computations in low-dimensional latent spaces, where standard algorithms would otherwise be computationally infeasible. However, the extent to which these models preserve or distort torsional features remains unknown.\\
 \noindent This work is the first to systematically explore torsion phenomena in the context of neural networks. We identify several structural and computational limitations and highlight challenges for future hybrid architectures.\\
 
 \noindent From a \textbf{mathematical perspective}, we show:
 \begin{itemize}
 	\item how torsion is typically lost due to dimensionality reduction in the encoder (Section~\ref{sec:dimred}),
 	\item how torsion that is preserved in the latent space leads to changes in persistence diagrams over field coefficients (Section~\ref{sec:torsreplatsp}),
 	\item and which decoder architectures fail to reconstruct torsion — and in which cases reconstruction becomes possible (Section~\ref{sec:lindec}).
 \end{itemize}
 
\noindent  In particular, our results imply that, even when working over a coefficient field, one cannot expect a trained autoencoder to induce isomorphisms between the homology groups of the input and latent spaces when torsional phenomena are involved. This challenges the assumptions underlying many TDA-inspired loss functions, which often aim to align field-based homological invariants between input, output, or latent representations \cite{moor2021topologicalautoencoders, trofimov2023learningtopologypreservingdatarepresentations}.\\

\noindent From a \textbf{computational perspective}, we demonstrate:
 \begin{itemize}
 	\item that torsion is highly unstable under local perturbations of the input data — small changes such as the displacement or removal of a single point can eliminate torsion entirely, and the prime order of torsion classes is sensitive to such modifications,
 	\item and that torsion reconstruction is not guaranteed, even for architectures explicitly designed to preserve topological features.
 \end{itemize}
 
\noindent  To support our claims, we perform experiments on both synthetic and high-dimensional datasets. For illustration, we use the double and triple loop point clouds embedded in three-dimensional Euclidean space (\cite{obayashi2023field}, Figure~4(a) and 4(e)), as well as randomly sampled high-dimensional point clouds. Torsion is detected using the algorithm presented in \cite{obayashi2023field}, implemented in the \texttt{homcloud} persistent homology software package \cite{homcloud}.

 \subsection{Related Work}
 Zomorodian, who introduced the so-called standard algorithm for computing persistent homology over an arbitrary coefficient ring in \cite{zomorodian2005computing}, was the first to address torsion phenomena from a computational perspective within topological data analysis. This line of inquiry was further developed by Obayashi \cite{obayashi2023field}, who examined the impact of torsion phenomena on invariants computed over a coefficient field in persistent homology, both from a computational and a mathematical standpoint.\\
 Computational results also indicate that torsion is a non-negligible phenomenon in high-dimensional data sets. Building on the model developed by Erd\H{o}s and R\'enyi for generating one-dimensional random simplicial complexes \cite{erdos1959on, erdos1960evolution}, Linial and Meshulam introduced a model for generating higher-dimensional random simplicial complexes \cite{linial2006homological, meshulam2009homological}. As shown by Kahle et al.~\cite{kahle2018cohenlenstraheuristicstorsionhomology}, there is almost always a moment in this random complex process at which a \enquote{burst of torsion} appears, marking a critical topological transition.

\section{Persistent Homology}
We define persistent homology with regard to its application to machine learning settings. In this context, we understand a data set $\mathcal{D}=\{x_i \in \mathbb{R}^d \; | \; 1 \leq i \leq n, \; n \in \mathbb{N} \}$ as a finite (Euclidean) metric space, also called a \textit{point cloud}. On this data set we construct a finite geometric simplicial complex -- a topological space constructed by gluing together dimensional simplices (points, line segments, triangles, tetrahedrons, etc.). We impose a total order 
\begin{equation}
	\label{filtration}
	K: \emptyset = K_0 \subset K_1 \subset ... \subset K_N
\end{equation}
on the set of simplices called a \textit{filtration}. In the following, we work predominantly with \textit{Vietoris--Rips filtrations} (in short: \enquote{Rips filtrations}).
\begin{definition}[Vietoris--Rips Filtration]
	\label{def:vrfiltr}
	Given a point cloud  ${\mathcal{D} = \{x_i \in \mathbb{R}^d \; | \; 1 \leq i \leq n \}}$ and positive real numbers \( \delta_0 < \delta_1 < \cdots < \delta_m \) with \( \delta_i \in \mathbb{R}_{>0} \), the Vietoris--Rips filtration is the nested sequence of simplicial complexes
	\begin{equation}
		\emptyset = V_{\delta_0}(\mathcal{D}) \subseteq V_{\delta_1}(\mathcal{D}) \subseteq \cdots \subseteq V_{\delta_m}(\mathcal{D})
	\end{equation}
	where a $k$-simplex in $V_{\delta_i}(\mathcal{D})$ exists if all pairwise distances between its $k+1$ vertices are $\leq \delta_i$.\end{definition}
\noindent Definition~\ref{def:vrfiltr} introduces the notion of an abstract filtered simplicial complex, a purely combinatorial object that can be endowed with the structure of a topological space via \emph{geometric realization} (\cite{edelharerbook}, Section III.1). For a Vietoris--Rips filtration constructed on a finite metric space $M \subset \mathbb{R}^d$, the dimension of its geometric realization may exceed $d$. In the following, we restrict attention to geometric filtered simplicial complexes that are embedded in $\mathbb{R}^d$, thereby excluding phenomena such as Vietoris--Rips bubbles~\cite{vrplanarpoint}. Our objects of study are thus finite geometric filtered simplicial complexes $K$ constructed from finite metric spaces $M \subset \mathbb{R}^d$.
\noindent The $p$-th persistent homology of a finite-dimensional filtered simplicial complex with coefficients in a commutative ring $R$ is given by
\begin{equation}
	\label{eq:persfunc}
	PH_p(K;R): H_p(K_0;R) \rightarrow ... \rightarrow H_p(K_N;R),
\end{equation}
where $H_p(K_i,R)$ denotes the $p$-th simplicial homology group for a subcomplex $K_i$, $0 \leq i \leq N$, in the filtration. This sequence captures how topological features such as connected components, loops, and voids appear and disappear across different scales. The image of the map \( H_p(K_i; R) \to H_p(K_j; R) \) is referred to as a \textit{persistent homology group} and identifies features that persist across the filtration interval $[i,j)$.\newline
\noindent Over \( R = \mathbb{Z} \), the simplicial homology groups \( H_p(K, \mathbb{Z}) \) decompose, by the structure theorem for finitely generated abelian groups \cite{dummit2003abstract, gallian2021abstract}, into a direct sum of a free part and a finite torsion subgroup.

\subsection{Field Coefficients and Persistence Diagrams}
For field coefficients, the torsion subgroup vanishes, and persistent homology is fully determined by the ranks of the free parts of the persistent homology groups (\textit{field Betti numbers}). As a result, its topological invariants admit a simple representation: namely, the persistence intervals \( [i, j) \) over which a feature exists within the filtration encode all relevant information. These intervals can be visualized either as points in a \textit{persistence diagram}—where the \( x \)-axis denotes the birth time of a feature and the \( y \)-axis its death time—or as horizontal bars in a so-called \textit{barcode diagram}. In the following, we denote by \( \mathrm{dgm}_p(X; R) \) the persistence diagram in homological dimension \( p \), with coefficients in the ring \( R \), of a topological space \( X \) obtained as the topological realization of a filtration of a finite metric space.
\\
\noindent In computational data analysis, persistent homology is highly popular because it is stable under noise and sufficiently small perturbations of the data. In other words, the metric distance (\textit{bottleneck distance}) between the persistence diagrams of two topological spaces that have a small Hausdorff distance is also small \cite{skraba2023wassersteinstabilitypersistencediagrams}. On the other hand, a common criticism for persistent homology is that individual outliers can have significant effects — they can even be considered \enquote{deadly} \cite{chazal2014robusttopologicalinferencedistance} in the sense that they significantly change the field Betti numbers of the underlying topological space. 

\subsection{Torsion}
Torsion coefficients annihilate specific elements of the homology group via multiplication. Specifically, for each element \( z \in H_p(K; \mathbb{Z}) \) of the torsion subgroup, there exists an integer \( q^{k} \), for $q$ prime, such that
\[
q^{k} z = 0,
\]
where $q^k$ is a \textit{torsion coefficient} and $q$ its \textit{prime divisor}. We say that the integral homology group of $K$ for homology dimension $p$ exhibits \textit{q-torsion} \cite{hatcher}. 
\begin{remark}
	\label{rem:torsfiltsimpcomp}
	In a \textbf{filtered} simplicial complex, torsion may occur either in the absolute homology group of a subcomplex \( K_i \), or in a relative homology group \( H_*(K_i, K_j) \), where \( K_j \subseteq K_i \) are subcomplexes appearing in the filtration of \( K \). An instance where torsion arises exclusively in relative homology, but not in any absolute homology group of the filtration, is given in \cite{obayashi2023field}, Example~1.5.
\end{remark}
\noindent For our subsequent discussion, we need the following result from \cite{obayashi2023field}.
\begin{theorem}[\cite{obayashi2023field}, Theorem 1.6]
	\label{th:obay}
	Let $X$ be a topological space obtained as the topological realization of a filtration of a finite metric space, $k$ a coefficient field and $p$ a homology dimension. The persistence diagram $\mathrm{dgm}_p(X;k)$ is independent of the choice of coefficient field, if the relative integral homology group $H_p(X_i,X_j;\mathbb{Z})$ is free for any indices $0 \leq i <j \leq N$ in the filtration of $X$ and $H_{p-1}(X_i;\mathbb{Z})$ is free for any $0 \leq i \leq N$.
\end{theorem}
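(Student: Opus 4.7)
The plan is to reduce the statement to showing that each persistent Betti number
$\beta_p^{i,j}(k) = \dim_k \mathrm{image}\bigl(H_p(X_i;k) \to H_p(X_j;k)\bigr)$
is independent of $k$, and then to derive this from the universal coefficient theorem (UCT) together with the long exact sequence of the pair $(X_j, X_i)$. Since field-independence of the $\beta_p^{i,j}$ forces field-independence of the multiplicities $\mu_p^{i,j} = (\beta_p^{i,j-1} - \beta_p^{i,j}) - (\beta_p^{i-1,j-1} - \beta_p^{i-1,j})$ of the points in $D_p(X;k)$, this suffices.

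First I would apply the UCT at each filtration index: since $H_{p-1}(X_i;\mathbb{Z})$ is free by hypothesis, the Tor summand vanishes and one obtains a natural isomorphism $H_p(X_i;k) \cong H_p(X_i;\mathbb{Z}) \otimes_\mathbb{Z} k$, under which the filtration map becomes $f_{i,j} \otimes \mathrm{id}_k$ for $f_{i,j}\colon H_p(X_i;\mathbb{Z}) \to H_p(X_j;\mathbb{Z})$ the inclusion-induced map. Specializing the relative hypothesis to the pair $(X_i, X_0) = (X_i, \emptyset)$ and using $H_p(\emptyset;\mathbb{Z}) = 0$ yields $H_p(X_i;\mathbb{Z}) \cong H_p(X_i,\emptyset;\mathbb{Z})$, which is free; hence every absolute group $H_p(X_i;\mathbb{Z})$ is free as well.

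The crux of the argument is to upgrade this to the statement that $\mathrm{image}(f_{i,j})$ is a \emph{direct summand} of $H_p(X_j;\mathbb{Z})$. From the long exact sequence of $(X_j, X_i)$ I would read off that $\mathrm{coker}(f_{i,j})$ injects into $H_p(X_j, X_i;\mathbb{Z})$, which is free by hypothesis; hence the cokernel is torsion-free, making the image a pure, and therefore direct-summand, subgroup of the finitely generated free group $H_p(X_j;\mathbb{Z})$. Tensoring the splitting $H_p(X_j;\mathbb{Z}) = \mathrm{image}(f_{i,j}) \oplus \mathrm{coker}(f_{i,j})$ with $k$ respects the image, so
\[
\dim_k \mathrm{image}(f_{i,j} \otimes \mathrm{id}_k) = \mathrm{rank}_\mathbb{Z} \mathrm{image}(f_{i,j}),
\]
a quantity manifestly independent of $k$.

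The main obstacle is exactly this direct-summand claim, which is also the step where both hypotheses are used in an essential way: without freeness of $H_p(X_j, X_i;\mathbb{Z})$ the cokernel could carry torsion and tensoring with fields of different characteristic would produce different Betti jumps, while without freeness of $H_{p-1}(X_i;\mathbb{Z})$ the UCT Tor contribution would already destroy the comparison between $H_p(\cdot;k)$ and $H_p(\cdot;\mathbb{Z}) \otimes k$ before the pair argument even begins. Everything else in the proof is essentially bookkeeping around the naturality of UCT and the exactness of the pair sequence.
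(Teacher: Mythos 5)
The paper under review cites this result from \cite{obayashi2023field} (Theorem~1.6) without reproducing a proof, so there is no in-paper argument to compare against. Judged on its own, your blind proof is correct and follows the standard route: reduce the statement to field-independence of the persistent Betti numbers $\beta_p^{i,j}$, kill the $\operatorname{Tor}$ term in the universal coefficient theorem using freeness of $H_{p-1}(X_i;\mathbb{Z})$ to get the natural identification $H_p(X_i;k)\cong H_p(X_i;\mathbb{Z})\otimes k$, and then use the freeness of the relative groups (via the long exact sequence of the pair) to show the cokernel of the filtration map is free, so that $\operatorname{image}(f_{i,j})$ splits off as a direct summand whose $k$-dimension after tensoring equals its $\mathbb{Z}$-rank. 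The specialization to $(X_j, X_0)=(X_j,\emptyset)$ to obtain freeness of the absolute groups $H_p(X_j;\mathbb{Z})$ is the right observation, since it ensures $\operatorname{image}(f_{i,j})$ is itself free and no torsion hides inside the image to cause characteristic-dependent collapse.

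One small thing worth flagging explicitly, since you absorbed it silently: the statement as transcribed in this paper writes $H_p(X_i, X_j;\mathbb{Z})$ with $i<j$, which is backwards for a filtration $X_i\subseteq X_j$; the correct reading, which you used throughout, is $H_p(X_j, X_i;\mathbb{Z})$. Also, in your splitting step, it is cleaner to say the short exact sequence $0\to\operatorname{image}(f_{i,j})\to H_p(X_j;\mathbb{Z})\to\operatorname{coker}(f_{i,j})\to 0$ splits because the cokernel is a subgroup of the free group $H_p(X_j,X_i;\mathbb{Z})$, hence free; "pure, therefore direct summand" is fine for finitely generated groups but the free-quotient argument is more direct. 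These are presentational points only; the mathematics is sound.
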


\section{Neural Networks}
Neural networks represent a class of machine learning models composed of interconnected computational units, inspired by the architecture of biological neurons. These units, or \textit{neurons}, are typically organized into \textit{layers}. A network with more than three layers is called \textit{deep}. We consider a neural network \( f \) as a composition of \( N \in \mathbb{N} \) sequentially connected layers \( f_j \), \( 1 \leq j \leq N \):
\begin{equation}
	\label{eq:network}
	f(x_{i,nn}) = f_N \circ f_{N-1} \circ \dots \circ f_2 \circ f_1(x_{i,nn}).
\end{equation}
The composition \( f_N \circ f_{N-1} \circ \dots \circ f_1 \) represents the \textit{forward propagation} of an input \( x_{i,nn} \) through the network's layers, yielding the final output. Thus such a network is also known as a \textit{feedforward network}.\newline
\noindent Let $\mathcal{D}=\{x_i \in \mathbb{R}^d \; | \; 1 \leq i \leq n, \; n \in \mathbb{N} \}$ be a data set serving as the \textit{input} to the neural network. The output \( o_{ijk} \) of neuron \( k \) in layer \( j \) for an input \( x_i \in \mathbb{R}^v \) is computed as:
\begin{equation}
	\label{eq:neurcomp}
	o_{ijk} = \sigma \left( W_{jk} \cdot x_i + b_{jk} \right),
\end{equation}
where \( \sigma: \mathbb{R} \to \mathbb{R} \) denotes a typically nonlinear \textit{activation function}. The term \( W_{jk} \in \mathbb{R}^v \) represents the \textit{weight vector} associated with neuron \( k \) in layer \( j \), and \( x_i \in \mathbb{R}^v \) is the input to this layer. Finally, \( b_{jk} \in \mathbb{R} \) denotes the \textit{bias} term associated with neuron \( k \) in layer \( j \). 
\begin{assumption}[Euclidicity]
	\label{ass:euclid}
For a given layer \( f_j \) containing \( M \in \mathbb{N} \) neurons, the output for an input \( x_{i,l} \in \mathbb{R}^v \) \textit{to the layer} is assumed to be a vector \( f_j(x_{i,l}) \in \mathbb{R}^M \), defined as:
\begin{equation}
	\label{eq:layerout}
	f_j(x_{i,l}) = [o_{ij1}, o_{ij2},..., o_{ijM}],
\end{equation}
where each \( o_{ijk} \in \mathbb{R} \) for \( 1 \leq k \leq M \) denotes the output of the \( k \)-th neuron in layer \( j \) for the input \( x_{i,l} \).
\end{assumption} 
\noindent We denote the term in Eq.~\eqref{eq:layerout} as the \textit{layer state}. 

\noindent The weights and biases are \textit{parameters} of the neuronal network that are \textit{optimized} during an iterative process called \textit{training}. The difference (also called \textit{loss}) between an output $y_i=f(x_{i,nn})$ associated with a specific input $x_{i,nn}$ to a neuronal network is measured with respect to a pre-selected metric distance (the \textit{error metric} or \textit{loss term}). This is selected in a way that a minimum loss would indicate the network to have learned a desirable characteristic of the input data. By default, the mean squared error (MSE) is selected in applications, defined as:  

\[
\text{MSE} = \frac{1}{n} \sum_{i=1}^{n} (y_i - \hat{y}_i)^2
\]

\noindent where \( y_i \) are the true outputs, \( \hat{y}_i \) are the predicted outputs and $n$ denotes the cardinality of samples in the input data.

\section{Autoencoders}
Autoencoders \cite{rumelhart} constitute a network architecture initially introduced for dimensionality reduction \cite{GoodBengCour16}. Via the use of nonlinear activation functions, autoencoders represent a nonlinear dimensionality reduction method.
\begin{definition}[Autoencoder]
	\label{def:AE}
	Let $\mathcal{D} \subset \mathbb{R}^d$ be an input data set. Under Assumption~\ref{ass:euclid}, an autoencoder is a neural network comprising two parameterized functions (we assume each given in the form of Eq.~\eqref{eq:network}), namely, the \textbf{encoder}
	\begin{equation}
		\label{enceq}
		f_{\text{enc}}(\cdot;\theta_{\text{enc}}): \mathbb{R}^d \rightarrow \mathbb{R}^s
	\end{equation} 
	and the \textbf{decoder}
	\begin{equation}
		\label{deceq}
		f_{\text{dec}}(\cdot;\theta_{\text{dec}}): \mathbb{R}^s \rightarrow \mathbb{R}^d,
	\end{equation}
	where $\theta_{\text{enc}}$ denote the encoder's parameters and $\theta_{\text{dec}}$ denote the decoder's parameters respectively.
	Together they define the autoencoder as a function
	\begin{equation}
		\begin{split}
			f_{\text{auto}}&: \mathbb{R}^d \to \mathbb{R}^d \\	
			f_{\text{auto}} &:= f_{\text{dec}} \circ f_{\text{enc}}.
		\end{split}
	\end{equation} 
	The output of the encoder is called the \textbf{latent space}.
\end{definition}
\noindent The defining characteristic of an autoencoder is that the latent layer contains significantly fewer neurons than the input and output layers, i.e. $s<<d$ in Definition~\ref{def:AE}.\\
\noindent If the encoder does not exhibit stochasticity, the encoder is surjective. However, autoencoders are not injective and thus not bijective if not explicitly constructed so.\\
\noindent With regard to their broad applicability and the need for non-model driven ways for data analysis, there exists a wide variation of autoencoder architectures. We call an autoencoder trained using the MSE loss \textit{vanilla}. In Appendix~\ref{app:AE}, we present two modifications of the loss term inspired from TDA, that we use for subsequent experiments in this work.

\section{Torsion and Stability}
In an autoencoder, the data undergoes (non-linear) transformations through the weight matrices and activation functions. This raises two questions:
\begin{enumerate}[label=\Roman*.]
	\item How do these transformations affect the torsion subgroups in the input data?
	\item Whether and how does the encoder remove torsion from the data so that it no longer exists in the latent space?
	
\end{enumerate}

\noindent Two factors are crucial for answering these questions: (1) the dimensionality reduction and (2) the shift of the data. Specifically, based on Assumption~\ref{ass:euclid}, the transformation induced by the weight matrix and the activation function is understood as a shift of the data points in Euclidean space.

\subsection{Data Shift}
Torsion is sensitive to small changes and perturbations in the underlying data. This becomes apparent already in the following minimal example using a Möbius strip.
\begin{example} [Möbius Strip]
	Figure~\ref{fig:moebiusstrip} exhibits an illustration of the minimal triangulation of the Möbius strip. Clearly, the removal of $a_3$ results in the removal of the non-orientability and thus the relative torsion in the Möbius strip structure, despite still exhibiting an $H_1$--cycle due to the remaining connection at $a_0$. 
\end{example}

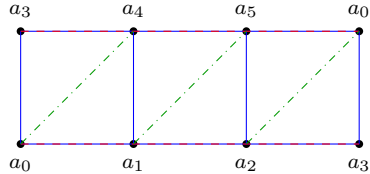
\begin{figure}[ht]
	\centering
	\begin{tikzpicture}[scale=1.5, every node/.style={font=\footnotesize}]
		\coordinate (a0) at (0,0);
		\coordinate (a1) at (1,0);
		\coordinate (a2) at (2,0);
		\coordinate (a3) at (3,0);
		
		\coordinate (a3t) at (0,1);
		\coordinate (a4) at (1,1);
		\coordinate (a5) at (2,1);
		\coordinate (a0t) at (3,1);
		
		\foreach \name in {a0,a1,a2,a3,a3t,a4,a5,a0t}
		\fill (\name) circle (1pt);
		
		\node[below=2pt] at (a0) {$a_0$};
		\node[below=2pt] at (a1) {$a_1$};
		\node[below=2pt] at (a2) {$a_2$};
		\node[below=2pt] at (a3) {$a_3$};
		
		\node[above=2pt] at (a3t) {$a_3$};
		\node[above=2pt] at (a4) {$a_4$};
		\node[above=2pt] at (a5) {$a_5$};
		\node[above=2pt] at (a0t) {$a_0$};
		
		\draw[blue] (a0) -- (a1);
		\draw[blue] (a1) -- (a2);
		\draw[blue] (a2) -- (a3);
		\draw[blue] (a3t) -- (a0t);
		\draw[blue] (a0) -- (a3t);
		\draw[blue] (a1) -- (a4);
		\draw[blue] (a2) -- (a5);
		\draw[blue] (a3) -- (a0t);
		
		\draw[red, dashed] (a0) -- (a1) -- (a2) -- (a3);
		\draw[red, dashed] (a3t) -- (a4) -- (a5) -- (a0t);
		
		\draw[green!60!black, dash dot] (a0) -- (a4);
		\draw[green!60!black, dash dot] (a1) -- (a5);
		\draw[green!60!black, dash dot] (a2) -- (a0t);
		
	\end{tikzpicture}
	\caption{Triangulation of the Möbius strip. Orientation induced by the ordering $a_0 < a_1 < a_2 < a_3 < a_4 < a_5$. }
	\label{fig:moebiusstrip}
\end{figure}

\noindent To get a sense of how easily torsion can be removed from a data structure by shifting just a few points, we used the algorithm developed in \cite{obayashi2023field} to identify the first simplex that contributed to torsion in the triple loop point cloud (taken from \cite{obayashi2023field}, Fig. 4 (b)). We added Gaussian noise to its vertices and then used the algorithm again to identify the first torsional simplex. We repeated this process until no torsion could be identified in the data. This happened after only $10$ out of $600$ data points were shifted. The results are depicted in Figure~\ref{fig:tors3loop} and summarized in Table~\ref{tab:comparison}.

\begin{figure}[ht]
	\centering
	
	\includegraphics[width=0.5\linewidth]{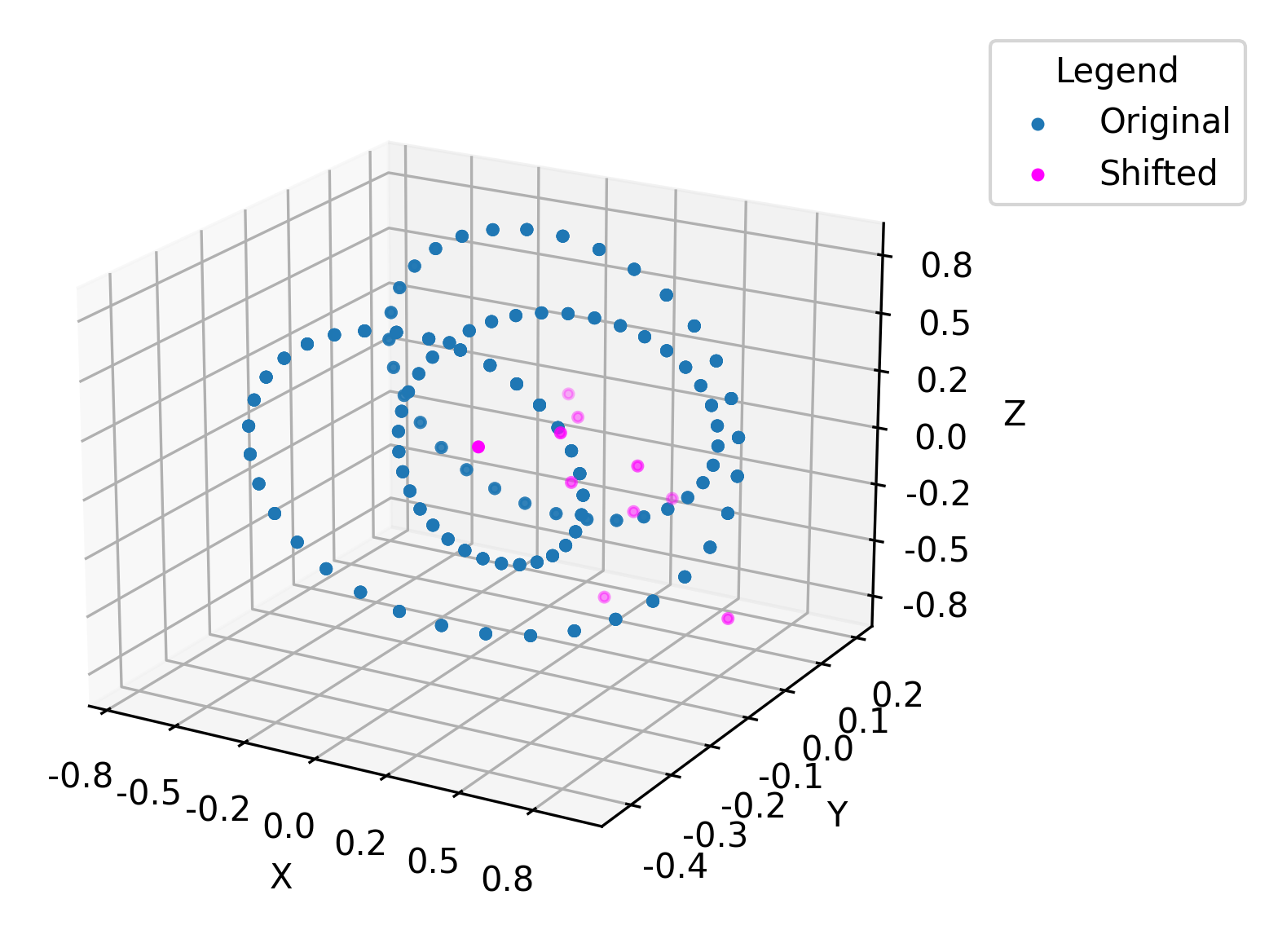}
	\caption*{(a) Triple loop point cloud with shifted points marked in pink.}
	\vspace{1em}
	
	\includegraphics[width=0.5\linewidth]{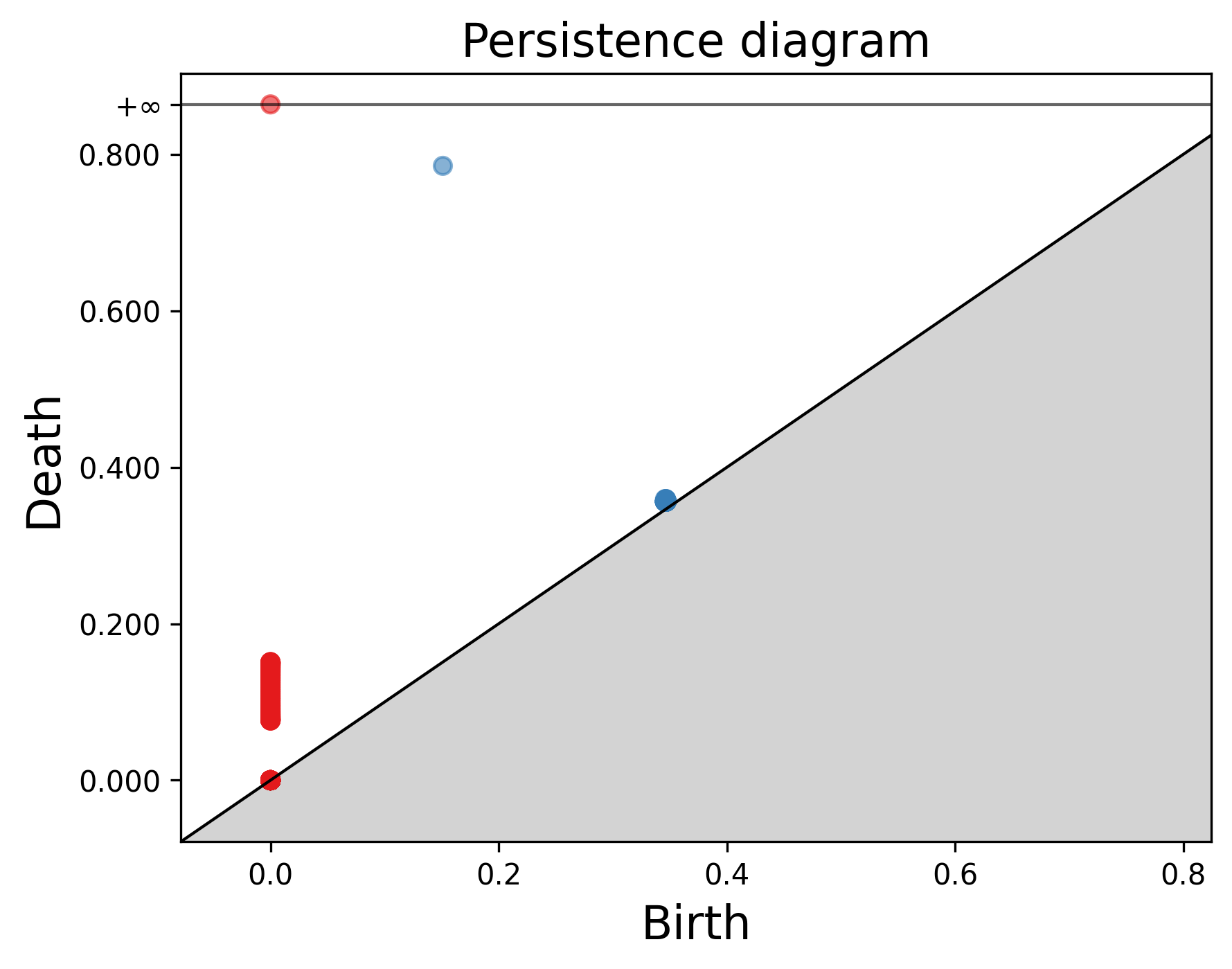}
	\caption*{(b) Persistence diagram of the triple loop point cloud (over $\mathbb{Z}/2\mathbb{Z}$).}
	\vspace{1em}
	
	\includegraphics[width=0.5\linewidth]{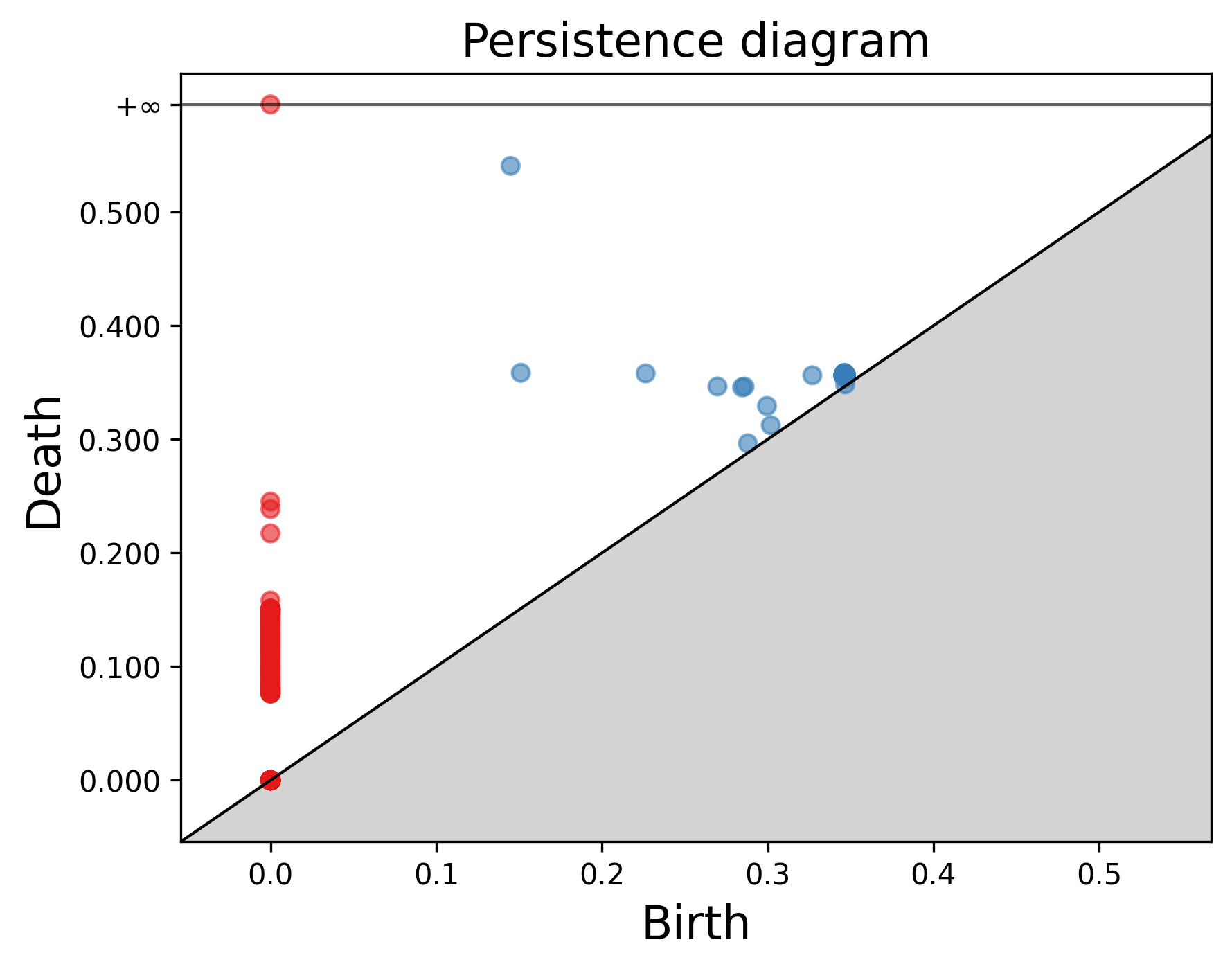}
	\caption*{(c) Persistence diagram of the shifted point cloud (over $\mathbb{Z}/2\mathbb{Z}$).}
	
	\caption{Triple loop point cloud with shifted points and corresponding persistence diagrams computed with GUDHI.}
	\label{fig:tors3loop}
\end{figure}

\begin{table}[ht]
	\centering
	\resizebox{\columnwidth}{!}{%
		\begin{tabular}{
				>{\centering\arraybackslash}p{4cm} 
				>{\centering\arraybackslash}p{2.5cm} 
				>{\centering\arraybackslash}p{3cm}
			}
			\toprule
			\textbf{Card. of shifted data points} & \textbf{MSE} & \textbf{Wasserstein dist.} \\
			\midrule
			10 & 0.0005 & 0.36 \\
			\midrule
			\multicolumn{3}{c}{\textbf{Bottleneck distances}} \\
			\midrule
			\textbf{$H_0$} & 0.09 & \\
			\textbf{$H_1$} & 0.24 & \\
			\bottomrule
		\end{tabular}
	}
	\caption{Comparison of triple loop point cloud to shifted point cloud.}
	\label{tab:comparison}
\end{table}

\noindent From Table~\ref{tab:comparison} can be deduced that first of all, the MSE is not meaningful with regard to the reconstructibility of torsional structures. Furthermore, a comparison of the two persistence diagrams shown in Figure~\ref{fig:tors3loop} unveils the aforementioned instability of (single-parameter) persistent homology with respect to \enquote{outliers} in the data.\newline

\noindent Even without the transformations leading to individual \enquote{outliers}, we demonstrate that already the multiplication with the weight matrix can induce or remove torsion from the data. \newline

\begin{example}[Weight Matrix and Torsion]
	In Figure~\ref{fig:doublepc} on the left a double loop point cloud $\in \mathbb{R}^3$ (taken from \cite{obayashi2023field}, Fig. 4 (a)) is shown. The following transformation matrix $\mathbf{T}: \mathbb{R}^3 \to \mathbb{R}^3$ is applied to this point cloud:
	\[
	\mathbf{T} = \begin{pmatrix}
		0.05 & 0 & 0 \\
		0 & 3 & 0 \\
		0 & 0 & 1
	\end{pmatrix}.
	\]
	This results in a point cloud without torsion as depicted in Figure~\ref{fig:4b}.
\end{example}

\begin{figure}[H]
	\centering
	\begin{subcaptionbox}{Original\label{fig:4a}}[0.48\columnwidth]
		{\includegraphics[width=\linewidth]{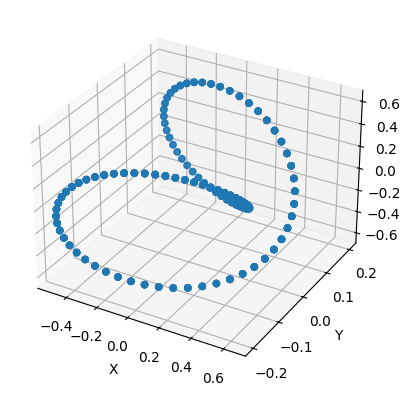}}
	\end{subcaptionbox}
	\hfill
	\begin{subcaptionbox}{Transformed\label{fig:4b}}[0.48\columnwidth]
		{\includegraphics[width=\linewidth]{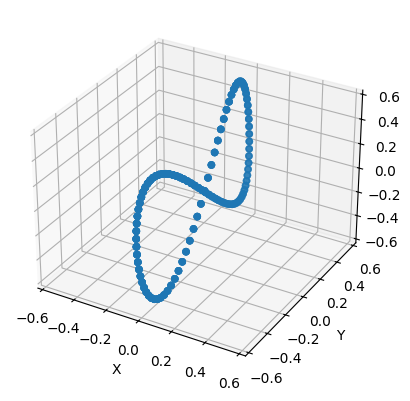}}
	\end{subcaptionbox}
	\caption{Illustration of torsion: double-loop point cloud before and after a linear transformation.}
	\label{fig:doublepc}
\end{figure}

\noindent With regard to the nonlinear activation function part, we applied six common activation functions to the double loop point cloud; the result is shown in Figure~\ref{fig:activfunc}.

\noindent From a mathematical perspective, the interaction between (network) induced point shifts and torsion is still entirely unexplored. Regarding the relationship between point shifts and torsional structures, \cite{dey2013edgecontractionssimplicialhomology} have established conditions under which an edge collapse (a very restricted class of point shifts \cite{deyedels}) \textit{does not} lead to the occurrence of relative torsion in the filtration. However, the converse problem — whether certain point shifts may induce torsion — remains entirely open.

\noindent The above two demonstrations indicate that the interaction of torsion in the input data and neural network operations is highly sensitive to the spatial distribution, scaling, and normalization of the data. 

\subsection{Dimensionality Reduction}
\label{sec:dimred}

The removal of torsional components in the latent space induced by the dimensional reduction through the encoder arises as a consequence of Alexander duality.
\begin{corollary}[\cite{hatcher}, Corollary 3.46]
	\label{cor:alexdual}
	Let \( X \subset \mathbb{R}^n \) be compact and locally contractible. Then the homology groups \( H_i(X; \mathbb{Z}) \) vanish for all \( i \geq n \), and are torsion-free for \( i = n-1 \) and \( i = n-2 \).
\end{corollary}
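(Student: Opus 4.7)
The plan is to deduce the corollary from Alexander duality together with the universal coefficient theorem, in the spirit of Hatcher's own proof of his Corollary 3.46.

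First, I would pass to the one-point compactification $S^n$ of $\mathbb{R}^n$, so that $X$ becomes a compact, locally contractible subspace of $S^n$. These hypotheses let me apply Alexander duality (Hatcher, Theorem 3.44) to obtain
$$\tilde{H}_i(X;\mathbb{Z}) \;\cong\; \tilde{H}^{\,n-i-1}(S^n\setminus X;\mathbb{Z})$$
for every $i \geq 0$. The three claims of the corollary are then read off from the right-hand side, with $Y := S^n \setminus X$ playing the role of the open complement.

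Next, I would dispatch the vanishing and the $i=n-1$ statement directly. For $i \geq n$ the exponent $n-i-1$ is negative, so the group is zero. For $i = n-1$ the right-hand side is $\tilde{H}^0(Y;\mathbb{Z})$, which embeds into the free abelian group of locally constant $\mathbb{Z}$-valued functions on the path components of $Y$ and is therefore torsion-free. For the remaining case $i = n-2$, I would apply the universal coefficient theorem to split
$$\tilde{H}^1(Y;\mathbb{Z}) \;\cong\; \mathrm{Hom}\bigl(\tilde{H}_1(Y;\mathbb{Z}),\mathbb{Z}\bigr) \oplus \mathrm{Ext}\bigl(\tilde{H}_0(Y;\mathbb{Z}),\mathbb{Z}\bigr),$$
observe that the Hom summand is torsion-free as a subgroup of a product of copies of $\mathbb{Z}$, and note that the Ext summand vanishes because $H_0$ of any space is free abelian. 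Combined with the duality isomorphism, this gives that $\tilde{H}_{n-2}(X;\mathbb{Z})$ carries no torsion.

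The hardest (and already black-boxed) step is the invocation of Alexander duality in its singular-cohomology form: it is precisely the local contractibility of $X$ that allows one to avoid \v{C}ech cohomology and to identify $\check{H}^*(X;\mathbb{Z})$ with singular $H^*(X;\mathbb{Z})$, and thereby to state the duality as above. Once this is accepted, the three assertions of the corollary reduce to the elementary observations about reduced $H^0$ and about the universal coefficient splitting sketched in the previous paragraph.
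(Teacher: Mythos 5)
Your argument goes wrong at the very first step: the isomorphism you write down is not what Hatcher's Theorem~3.44 says. Alexander duality in Hatcher's formulation relates the \emph{cohomology} of the compact, locally contractible piece to the \emph{homology} of its open complement, namely
\[
\tilde H_i\bigl(S^n\setminus X;\mathbb{Z}\bigr)\;\cong\;\tilde H^{\,n-i-1}\bigl(X;\mathbb{Z}\bigr),
\qquad\text{equivalently}\qquad
\tilde H^{\,j}(X;\mathbb{Z})\;\cong\;\tilde H_{n-j-1}\bigl(S^n\setminus X;\mathbb{Z}\bigr).
\]
You have instead written $\tilde H_i(X)\cong\tilde H^{\,n-i-1}(S^n\setminus X)$, which interchanges the roles of $X$ and its complement; Theorem~3.44 cannot be applied with $S^n\setminus X$ in place of $X$ because the complement is not compact. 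The isomorphism you assert is in fact \emph{true} under the stated hypotheses, but deriving it from Theorem~3.44 already requires the universal coefficient theorem together with the finite generation of the homology of $X$ --- that is, precisely the machinery that the corollary's proof is supposed to supply, so citing it as ``Theorem~3.44'' begs the question.

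The route Hatcher actually takes (and the one your ``in the spirit of'' claim should have matched) is to use the \emph{correct} form of the duality: from $\tilde H^{\,j}(X)\cong\tilde H_{n-j-1}(S^n\setminus X)$ one reads off that $\tilde H^{\,j}(X;\mathbb{Z})=0$ for $j\ge n$ and that $\tilde H^{\,n-1}(X;\mathbb{Z})$ is free abelian, and then one applies the universal coefficient theorem \emph{on the $X$ side}: the vanishing of $\operatorname{Ext}(\tilde H_{j-1}(X),\mathbb{Z})$ for $j\ge n$ and the freeness of $\tilde H^{\,n-1}(X)$ kill the torsion in $\tilde H_{n-1}(X)$ and $\tilde H_{n-2}(X)$ respectively, while the simultaneous vanishing of $\operatorname{Hom}(\tilde H_j(X),\mathbb{Z})$ and $\operatorname{Ext}(\tilde H_j(X),\mathbb{Z})$ for $j\ge n$ forces $\tilde H_j(X)=0$. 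Your UCT computation of $\tilde H^1(Y)$ for $Y=S^n\setminus X$ is correct as a statement about $Y$, but it is attached to the wrong side of the duality: the Ext/Hom bookkeeping must be done for $X$, not for its complement, unless you first establish the symmetrized duality --- and then you should say so and justify it rather than attribute it to Theorem~3.44.
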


\begin{proposition}
	By transforming data from a $d$-dimensional input space $X \subset \mathbb{R}^d$, $d>3$ to a $\lambda$-dimensional latent space with $2<\lambda<d$ and under Assumption~\ref{ass:euclid}, all torsion effects in the absolute homology groups with homology dimension $\geq \lambda - 2$ are removed from the subspaces imposed by a Rips filtration on the data.	
\end{proposition}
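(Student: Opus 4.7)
The plan is to apply Corollary~\ref{cor:alexdual} (Alexander duality) to the latent point cloud. By Assumption~\ref{ass:euclid}, the encoder output $Y := f_{\text{enc}}(\mathcal{D})$ is a finite subset of $\mathbb{R}^\lambda$, so the ambient dimension in which the Rips filtration is constructed is $\lambda$. The goal is to show that for every filtration value $\delta_i$ and every $j \geq \lambda - 2$, the integer homology $H_j(V_{\delta_i}(Y); \mathbb{Z})$ is torsion-free.

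First, I would relate each Rips subcomplex $V_{\delta_i}(Y)$ to a genuine subspace of $\mathbb{R}^\lambda$, since Corollary~\ref{cor:alexdual} is stated for compact, locally contractible subsets of Euclidean space. The standard device is the Nerve lemma: the \v{C}ech complex $\check{C}_{\delta_i}(Y)$ is homotopy equivalent to the union of closed balls $U_{\delta_i} := \bigcup_{y \in Y} \overline{B_{\delta_i/2}(y)} \subset \mathbb{R}^\lambda$, which is a finite union of convex compacta and therefore compact and locally contractible. Corollary~\ref{cor:alexdual} then gives that $H_j(U_{\delta_i}; \mathbb{Z})$ vanishes for $j \geq \lambda$ and is torsion-free for $j \in \{\lambda - 2, \lambda - 1\}$. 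Because the torsion-free (resp.\ vanishing) property is a homotopy invariant, it transfers to $\check{C}_{\delta_i}(Y)$.

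Second, I would transfer the conclusion from \v{C}ech to Rips. The cleanest route is to replace the \v{C}ech complex in the argument by the alpha complex, which is likewise homotopy equivalent to $U_{\delta_i}$ but is naturally realized as a subcomplex of $\mathbb{R}^\lambda$; for the homological invariants in question this yields the same conclusion. An alternative route is to invoke the standard \v{C}ech--Rips interleaving to propagate the constraint along the filtration. Under the hypothesis $\lambda > 2$, the range $j \geq \lambda - 2$ is nontrivial, and the statement of the proposition then follows by combining the vanishing and torsion-free parts of Corollary~\ref{cor:alexdual}.

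The main obstacle is precisely the Rips-versus-\v{C}ech subtlety: a Rips complex is an abstract simplicial complex whose geometric realization is in general \emph{not} a subspace of $\mathbb{R}^\lambda$, since it can contain simplices of dimension exceeding $\lambda$. Consequently, Corollary~\ref{cor:alexdual} does not apply to it directly, and the bridge through the Nerve lemma (or, equivalently, through the alpha complex) is the essential technical step. Care must additionally be taken that every identification respects \emph{integer} coefficients rather than merely field coefficients, since torsion is invisible to the latter and the whole proposition would otherwise be vacuous.
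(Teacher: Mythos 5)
Your instinct to flag the Rips-versus-$\mathbb{R}^\lambda$ embedding issue is correct, and in fact you are more careful on this point than the paper's own argument. The paper asserts that the Rips subcomplexes are compact, locally contractible finite CW complexes and then invokes Corollary~\ref{cor:alexdual} directly. But that corollary requires the space to be a compact, locally contractible \emph{subspace of} $\mathbb{R}^n$; a Rips complex built on points in $\mathbb{R}^\lambda$ is an abstract simplicial complex that can contain simplices of dimension far exceeding $\lambda$ and does not in general embed in $\mathbb{R}^\lambda$. So the paper's proof has a gap at exactly the place you identify.

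However, your proposed bridge does not close that gap either. The first half is fine: by the Nerve lemma, $\check{C}_{\delta}(Y) \simeq U_{\delta} \subset \mathbb{R}^{\lambda}$, so Corollary~\ref{cor:alexdual} gives torsion-freeness for $j \ge \lambda-2$ and vanishing for $j \ge \lambda$; the alpha complex yields the same conclusion. But Rips and \v{C}ech (or alpha) complexes at a given scale are not homotopy equivalent---they are only interleaved---and interleaving constrains the persistence module up to a scale shift, not the absolute homology of any individual Rips subcomplex $V_{\delta_i}(Y)$. Torsion is especially fragile here: nothing in the interleaving excludes a torsion class that is born and dies strictly between two \v{C}ech scales. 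So your second step does not yield the proposition as stated, which is a claim about the absolute homology of each Rips subcomplex. The missing ingredient is a torsion constraint specific to Rips complexes of finite subsets of $\mathbb{R}^\lambda$ (such a result is known for $\lambda=2$, where planar Rips complexes are homotopy wedges of spheres and hence torsion-free, but not in general ambient dimension), and neither your route through \v{C}ech/alpha nor the paper's direct appeal to Alexander duality supplies it.
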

\begin{proof}
	As finite CW complexes, the subspaces of the geometric realization of a Rips filtration on a finite metric space are compact and locally contractible \cite[p.~197, 520]{hatcher}.
	By Corollary~\ref{cor:alexdual}, the integral simplicial homology groups of these subspaces may exhibit a torsion part for homology dimension $1 \leq p \leq d-3$.
	We consider a Rips filtration also on the latent space $L$, whose geometric realization is by Assumption~\ref{ass:euclid} a subspace of a Euclidean space $L \subset \mathbb{R}^{\lambda}$. Thus, by Corollary~\ref{cor:alexdual}, the integral homology groups may exhibit torsion up to homology dimension $\lambda-3$.	
\end{proof}
\noindent It has been proven in \cite{obayashi2023field}, Theorem 1.6 and Corollary 1.8, that for a filtered space embedded in $\mathbb{R}^d$, the $d-1$-dimensional \textit{persistent} homology is torsionfree (which by Remark~\ref{rem:torsfiltsimpcomp} signifies that the absolute as well as the relative homology groups of the subspaces in the filtration are torsionfree).
\begin{corollary}
	By transforming data $X \subset \mathbb{R}^d$, $d>3$ from a $d$-dimensional input space to a $\lambda$-dimensional latent space $\lambda <d$ with $\lambda>2$ and under Assumption~\ref{ass:euclid}, all torsion effects in the persistent homology of the input space for homology dimension $\lambda -1 \geq 1$ are removed. 	
\end{corollary}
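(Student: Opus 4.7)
The plan is to apply the already-cited result of Obayashi (Theorem 1.6 together with Corollary 1.8 of \cite{obayashi2023field}) directly to the latent space, exploiting the fact that, under Assumption~\ref{ass:euclid}, the latent space embeds in $\mathbb{R}^\lambda$. The preceding proposition, which relied on the Alexander duality corollary (Corollary~\ref{cor:alexdual}), only rules out torsion in the absolute homology groups of the subcomplexes in the Rips filtration; persistent homology, however, also encodes relative information (see Remark~\ref{rem:torsfiltsimpcomp}), and it is precisely this additional layer that Obayashi's statement controls in the top codimension.

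First I would observe that, under Assumption~\ref{ass:euclid}, the latent space $L$ is a subset of $\mathbb{R}^\lambda$, so the Vietoris--Rips filtration on $L$ is a filtration of subspaces of $\mathbb{R}^\lambda$. As in the proof of the preceding proposition, each subcomplex $L_i$ in this filtration is a finite CW complex and hence compact and locally contractible \cite[p.~197, 520]{hatcher}. Next I would invoke the cited result: for a filtered space embedded in $\mathbb{R}^\lambda$, the $(\lambda-1)$-dimensional persistent homology is torsion-free. By Remark~\ref{rem:torsfiltsimpcomp}, this single statement simultaneously excludes torsion in the absolute integral homology group $H_{\lambda-1}(L_i;\mathbb{Z})$ of every subcomplex and in every relative group $H_{\lambda-1}(L_i,L_j;\mathbb{Z})$ with $L_j \subseteq L_i$. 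Consequently, no $(\lambda-1)$-torsion class carried by the input filtration can survive in the latent Rips filtration.

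I do not anticipate any genuine obstacle here: the corollary is essentially a one-line invocation of a previously stated theorem, structurally parallel to the proof of the proposition but with Alexander duality replaced by Obayashi's stronger conclusion about relative groups. The only auxiliary remark worth making explicit is that the side condition $\lambda-1\geq 1$ is automatic from the hypothesis $\lambda>2$, and merely ensures that we are speaking about a non-trivial homological dimension in which torsion could \emph{a priori} have arisen.
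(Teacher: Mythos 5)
Your proposal is correct and follows exactly the route the paper intends: the paper gives no separate proof for this corollary, relying on the sentence immediately preceding it, which records that by Obayashi's Theorem~1.6 and Corollary~1.8 the $(\lambda-1)$-dimensional \emph{persistent} homology of a filtered space embedded in $\mathbb{R}^\lambda$ is torsion-free, and your reconstruction (including the observation that this is what upgrades the preceding proposition's absolute-homology conclusion to the relative/persistent setting) matches that implicit argument.
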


\section{Representing Torsion in Latent Space}
\label{sec:torsreplatsp}
The aim of training an autoencoder is to ensure that the latent space provides a low-dimensional yet highly representative encoding of the data. As we have seen in the examples taken from dynamical system theory above, torsion can serve as a discriminative feature within the data. Thus it is desirable for data classified as torsional in the input space to also be classified as torsional in the latent space. If torsion appears in the high-dimensional homology groups of the input data and these high-dimensional homology groups are no longer present in the latent data due to dimensionality reduction as demonstrated above, the representability of torsional structures in the latent space implies that these torsional structures must be reflected in lower-dimensional integral homology groups of the latent space.

\noindent This observation naturally leads to the following question:
\begin{enumerate}[label=\Roman*.]
	\setcounter{enumi}{2}  
	\item Under what conditions do torsional structures appear in the latent space when the input data exhibits such structures?	
\end{enumerate}

\noindent The representability of torsional structures in the latent space has implications for the properties that should be required of the trained encoder.

\begin{proposition}
	\label{prop:enc1}
	Let \( X \subset \mathbb{R}^d \) be a \( d \)-dimensional input space, and let 
	\( f_{\mathrm{enc}}: X \to L \) be an encoder into a 
	\( \lambda \)-dimensional latent space \( L \subset \mathbb{R}^{\lambda} \) 
	with \( \lambda \ll d \).  
	\noindent If \( X \) has \( q \)-torsion in its \( p \)-dimensional persistent homology for some \( p \geq \lambda - 1 \), and if \( f_{\mathrm{enc}} \) preserves torsional structures, in the sense that there exist a prime \( \varsigma  \) (possibly \( \varsigma = q \)) and a dimension \( p' \neq p \) (\( p' \leq \lambda-2 \)) such that \( f_{\mathrm{enc}}(X) \) has \( \varsigma \)-torsion in its \( p' \)-dimensional persistent homology, then there exists a prime \( q' \) such that \( f_{\mathrm{enc}} \) fails to induce an isomorphism on (relative) homology with coefficients in \( \mathbb{Z} / q'\mathbb{Z} \).
\end{proposition}

\begin{proof}
	The appearance of torsional structures in lower-dimensional homology implies that lower-dimensional (relative) homology groups (\( p < \lambda - 1 \)) over \( \mathbb{Z} \), which lacked torsion in the input data, now exhibit torsion in the latent space. Consequently, the encoder cannot induce an isomorphism on the homology groups over \( \mathbb{Z} \) for all dimensions \( p \). 
	\noindent By (\cite{hatcher}, Corollary 3A.6. (a)), the Betti numbers of the homology groups over \( \mathbb{Q} \) correspond to the free Betti numbers over \( \mathbb{Z} \), which are unaffected by torsion. According to (\cite{hatcher}, Corollary 3A.7. (b)), a map \( f: X \to Y \) induces an isomorphism on homology groups with \( \mathbb{Z} \) coefficients if and only if it induces isomorphisms over \( \mathbb{Q} \) and \( \mathbb{Z}/p\mathbb{Z} \) coefficients for all prime numbers \( p \). 
	\noindent Since torsion appears in persistent homology of the latent space for homology dimension $p< \lambda -1$, there must exist some prime \( q' \) such that the induced map on (relative) homology groups over \( \mathbb{Z}/q'\mathbb{Z} \) is not an isomorphism. The result follows.
\end{proof}

\noindent The representability of torsion in the latent space also leads to torsion-induced changes in the persistence diagrams, respectively barcode diagrams.

\begin{proposition}
	\label{prop:propo1}
	We assume that an input space \( X \subset \mathbb{R}^d \) is transformed into a latent space \( L \subset \mathbb{R}^{\lambda} \) with \( \lambda \ll d \) via an encoder \( f_{\mathrm{enc}} \). Assume that the Rips--filtration on the input space \( X \subset \mathbb{R}^d \) exhibits \( q \)-torsion in its \( p \)-dimensional persistent homology, for \( p \geq \lambda - 1 \ll d \). Then the representability of torsion in the latent space implies that there exists at least one prime \( q' \) such that the barcode diagram of the latent space over \( \mathbb{Z}/q'\mathbb{Z} \) exhibits at least one additional feature in comparison to the input space for at least two homology dimensions \( p < \lambda - 1 \).
\end{proposition}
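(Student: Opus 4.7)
The plan is to build on Proposition~\ref{prop:enc1} and convert its algebraic statement about the failure of the encoder to induce isomorphisms into a statement about bars in the $\mathbb{Z}/q'\mathbb{Z}$-barcode. By the torsion-preservation hypothesis and the argument of Proposition~\ref{prop:enc1}, we obtain a prime $q'$ and some homology dimension $p' < \lambda - 1$ at which the integral (relative or absolute) persistent homology of the latent Rips filtration exhibits a $\mathbb{Z}/{q'}^{k}\mathbb{Z}$-summand that is absent at the corresponding dimension in the input. Applying Corollary~\ref{cor:alexdual} to $L \subset \mathbb{R}^{\lambda}$ tightens this to $p' \leq \lambda - 3$, so that both $p'$ and $p' + 1$ lie strictly below $\lambda - 1$; these will be the two dimensions in the conclusion.

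Next, I would apply the universal coefficient theorem (\cite{hatcher}, Theorem~3A.3) stepwise along the filtration. A $\mathbb{Z}/{q'}^{k}\mathbb{Z}$-summand of $H_{p'}(L_i; \mathbb{Z})$ contributes a $\mathbb{Z}/q'\mathbb{Z}$-factor to $H_{p'}(L_i; \mathbb{Z}/q'\mathbb{Z})$ through the tensor term and, simultaneously, a $\mathbb{Z}/q'\mathbb{Z}$-factor to $H_{p'+1}(L_i; \mathbb{Z}/q'\mathbb{Z})$ through the $\mathrm{Tor}$ term. Naturality of the UCT short exact sequence with respect to the filtration inclusions makes these contributions compatible across the filtration, and by the uniqueness of the interval decomposition of a persistence module over a field they register as genuine additional intervals in dimensions $p'$ and $p' + 1$ of the $\mathbb{Z}/q'\mathbb{Z}$-barcode of $L$.

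To argue that these new intervals are genuinely absent from the input side, I would invoke Theorem~\ref{th:obay}: since the input's torsion sits at dimensions $p \geq \lambda - 1$ by assumption, the relative integral homology groups of the input filtration at the lower dimensions $p'$ and $p' + 1$ are free, so $D_{p'}(X; \mathbb{Z}/q'\mathbb{Z})$ and $D_{p'+1}(X; \mathbb{Z}/q'\mathbb{Z})$ agree with the corresponding rational barcodes and detect no torsion contribution. Combining this with the previous step yields at least one additional bar in the latent $\mathbb{Z}/q'\mathbb{Z}$-barcode at each of the two dimensions $p'$ and $p' + 1$, both of which lie below $\lambda - 1$, as required.

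The main obstacle I expect is ensuring that the tensor and $\mathrm{Tor}$ contributions are not cancelled by a simultaneous disappearance of free intervals in the latent's decomposition, which would leave the total number of bars unchanged. The safeguard is naturality of the UCT sequence together with the clean split of $H_{*}(\,\cdot\,; \mathbb{Z}/q'\mathbb{Z})$ into the tensor-from-$H_{p'}$ and $\mathrm{Tor}$-from-$H_{p'}$ parts, so the two new summands are identified independently of the free part and cannot be absorbed by it. A subsidiary point, handled via Remark~\ref{rem:torsfiltsimpcomp}, is the possibility that the new torsion appears only in a relative group $H_{p'}(L_i, L_j; \mathbb{Z})$; the same UCT-plus-naturality argument applies to the relative pair, which is precisely the framework in which Theorem~\ref{th:obay} is stated.
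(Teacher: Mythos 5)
Your route is genuinely different from the paper's. The paper obtains the first extra bar by arguing that a latent torsional cycle survives as a nontrivial class over $\mathbb{Z}/q'\mathbb{Z}$, and then extracts the second dimension indirectly from an Euler--characteristic argument: since $\chi$ is independent of the coefficient ring, an extra bar in one dimension must be matched by a change elsewhere, and the paper concludes this change is a second extra feature. You instead derive both dimensions at once from the universal coefficient theorem: a $\mathbb{Z}/{q'}^{k}\mathbb{Z}$-summand of $H_{p'}(L_i;\mathbb{Z})$ feeds the tensor term of $H_{p'}(L_i;\mathbb{Z}/q'\mathbb{Z})$ and the $\mathrm{Tor}$ term of $H_{p'+1}(L_i;\mathbb{Z}/q'\mathbb{Z})$, and you add an Alexander--duality tightening $p' \leq \lambda-3$ so that both $p'$ and $p'+1$ land strictly below $\lambda-1$, together with Theorem~\ref{th:obay} to certify those bars are absent on the input side. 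Your version is more explicit about the mechanism: the Euler--characteristic argument as written only says the alternating sum must balance, and by itself would not rule out a compensating \emph{decrease} in another dimension; that is excluded precisely because field Betti numbers dominate rational ones, which is the UCT fact you place front and center. Your explicit dimension bound $p'\le\lambda-3$ is also a step the paper's proof leaves implicit but actually needs.

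Two points deserve more care. First, the ``safeguard'' you cite --- naturality of the UCT short exact sequence ``together with the clean split'' --- is not quite right: the UCT sequence is natural but its \emph{splitting} is not, so naturality does not hand you a splitting of the persistence module $H_*(L;\mathbb{Z}/q'\mathbb{Z})$ into a tensor barcode plus a $\mathrm{Tor}$ barcode. What actually rescues the step is that exactness makes $\dim_{\mathbb{Z}/q'}H_p(L_i;\mathbb{Z}/q'\mathbb{Z})$ the sum of the tensor and $\mathrm{Tor}$ dimensions at every index $i$; this pointwise excess over the rational Betti numbers forces additional intervals in the (unique, field-coefficient) interval decomposition. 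Phrase it through that dimension count rather than through a natural splitting. Second, your tightening $p'\le\lambda-3$ invokes Corollary~\ref{cor:alexdual}, which constrains \emph{absolute} integral homology; Remark~\ref{rem:torsfiltsimpcomp} allows the latent torsion to live only in a relative group $H_{p'}(L_i,L_j;\mathbb{Z})$, for which that corollary is silent, and a relative torsion summand at $p'=\lambda-2$ would push its $\mathrm{Tor}$ contribution to $\lambda-1$, outside the claimed range. The paper's own proof shares this soft spot, but since you lean on the Alexander bound explicitly, you should treat the relative case separately (e.g.\ via the bound that already appears in the discussion of Obayashi's Corollary~1.8 in Section~\ref{sec:dimred}).
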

\begin{proof}
The existence of \( q \)-torsion for homology dimension \( p \) implies that there is at least one torsional cycle forming the basis of the torsion subgroup of the (relative) homology group in \( p \). All elements of the torsion subgroup lie in the kernel of the boundary operator when computed over the finite field \( \mathbb{Z}/q\mathbb{Z} \). Consequently, cycles whose torsion coefficient times their representative loop bounds a simplex in the integral chain complex are incorrectly identified as nontrivial homology classes over \( \mathbb{Z}/q\mathbb{Z} \). Therefore, there exists at least one additional nontrivial cycle over \( \mathbb{Z}/q\mathbb{Z} \) (torsional feature) compared to other finite fields \( \mathbb{Z}/r\mathbb{Z} \) for \( r \) prime. Its lifetime corresponds to a point in the \( p \)-dimensional persistence diagram and a bar in the \( p \)-dimensional barcode diagram. Now, as the Euler characteristic (\cite{hatcher}, Theorem 2.44)
\[
\chi(X) = \sum_{p} (-1)^p \operatorname{rank} H_p(X)
\]
is determined only by the homotopy type of a finite CW complex \( X \) and thus independent of the choice of coefficient ring, any change in the count of torsional features in one dimension must be compensated by a corresponding change in another dimension to maintain the Euler characteristic. This compensation manifests as the appearance of at least one additional feature in a different homology dimension \( p' \neq p \) in the persistent homology of the latent space. Specifically, we conclude that there must exist another homology dimension \( p' < \lambda - 1 \) where an additional feature appears in the barcode diagram over \( \mathbb{Z}/q'\mathbb{Z} \). 
\noindent This establishes the claim.
\end{proof}
\noindent We use the notion of persistence entropy to quantify the change in the persistence diagram or barcode diagram of the latent space described in Proposition~\ref{prop:propo1}, which leads to a difference between the persistence diagrams of the input and latent space for homology dimension $p < \lambda - 1$ over a finite field $\mathbb{Z}/q\mathbb{Z}$ solely induced by torsion.
\begin{definition}[Persistence Entropy, \cite{rucco:hal-01260143, persbar}]
Given a filtration $F$ of a topological space and the corresponding persistence diagram \( \operatorname{dgm}(F) = \{a_i = (b_i, d_i) \mid 1 \leq i \leq n\} \) (where \( b_i < d_i \) for all \( i \)), we define the set of barcode lengths \( L = \{\ell_i = d_i - b_i \mid 1 \leq i \leq n\} \). The persistent entropy \( E(F) \) is calculated as the Shannon entropy of the normalized persistence bar lengths of a given filtration:
\begin{align}
	E(F) &= - \sum_{i=1}^n p_i \log(p_i), \quad \text{where} \label{eq:entropy}\\
	p_i &= \frac{\ell_i}{S_L}, \quad \ell_i = d_i - b_i, \quad
	S_L = \sum_{i=1}^n \ell_i \notag
\end{align}

\end{definition}
\noindent The persistent entropy reaches its maximum $\mathrm{log}(n)$ when all $n$ bars in the barcode are of equal length. We consider a Rips filtration on a finite metric space \( X \subset \mathbb{R}^d \), where \( r \) represents the minimum Euclidean distance between two points in \( X \) and \( 2T \) denotes the radius of \( X \).
 
\noindent In \cite{atienza2019persistent}, minimum and maximum values for the persistence entropy of a barcode diagram based on this Rips filtration were established, depending on the ratio \( \alpha:= r/T \). For our subsequent discussion, we require the following result, which arises from the minimization of the concave persistence entropy.
\begin{lemma}[Maximum Number of Features; \cite{atienza2019persistent}, Theorem 13]
	\label{lemm:8}
	Let $X \subset \mathbb{R}^d$ be a finite metric space with minimum Euclidean distance $r$ between two points and radius $2T$. Then the maximum number of bars for a barcode computed via the Rips filtration on $X$ with $n$ bars is given by:
	\begin{equation}
		\label{eq:Q}
		Q=n\frac{\alpha \mathrm{log}(\frac{1}{\alpha})-\alpha (1-\alpha)}{(1-\alpha)^{2}}.
	\end{equation}
\end{lemma}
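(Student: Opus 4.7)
The plan is to reduce the statement to a constrained optimization of the persistence entropy $E(F)$ on the space of admissible bar length vectors $(\ell_1, \dots, \ell_n)$ arising from a Rips filtration of $X$. First, I would translate the geometric hypotheses into invariant constraints on the $\ell_i$: each bar length is bounded below by (a quantity controlled by) the minimum inter-point distance $r$ and above by $2T$, and their sum $S_L$ scales accordingly. Introducing $\alpha = r/(2T)$ as the intrinsic scale ratio rewrites every normalized length $p_i = \ell_i/S_L$ purely in terms of $\alpha$ and the bar multiplicities, turning the problem into one on a simplex-shaped feasible region parameterized by $\alpha$ and $n$.

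Next I would exploit strict concavity of $E$ in the $p_i$: extremizers of $E$ under the given constraints must lie on the boundary of the feasible region. A symmetry argument then collapses the optimization to two-value configurations in which $k$ bars have length $r$ and $n-k$ bars have length $2T$. Writing $t = k/n$, the entropy becomes an explicit univariate function of $t$ with $\alpha$ as a parameter, and solving $\partial E/\partial t = 0$ yields a closed-form minimizer $t^{\ast}(\alpha)$.

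Substituting $t^{\ast}$ back into $E$ and simplifying---the logarithmic terms contract to $\alpha \log(1/\alpha)$, and the denominator $(1-\alpha)^2$ emerges from differentiating the normalization $S_L = n r t + n (2T)(1-t)$---delivers the extremal value in the form
\begin{equation*}
Q = n\,\frac{\alpha \log(1/\alpha) - \alpha(1-\alpha)}{(1-\alpha)^{2}},
\end{equation*}
as claimed. The interpretation as a maximum bar count then follows by combining this extremal value with the universal upper bound $E(F) \leq \log n$ for any $n$-bar barcode, so that a configuration saturating the bound realizes the extreme admissible number of features.

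The main obstacle I expect is the rigorous justification of the reduction to two-value boundary configurations. This requires verifying that the set of Rips-realizable bar-length tuples is convex (or at least that its convex hull does not enlarge the extremizer set), so that strict concavity of $E$ suffices to push the optimum to the corners. A secondary subtlety is realizability: both extremes $r$ and $2T$ must be jointly attainable within a single Rips filtration, which can be handled by exhibiting explicit point configurations as in the cited reference.
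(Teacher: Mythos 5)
The paper does not prove this lemma: it is stated verbatim as a citation of Theorem~13 in \cite{atienza2019persistent}, with no argument supplied, so there is no in-paper proof to compare against. Your sketch is therefore a reconstruction of the external result, not a parallel to anything in the manuscript.

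On the merits of the sketch itself, the ingredients are plausible — taking $\alpha = r/(2T)$ as the scale ratio (which, incidentally, the lemma as stated in the paper never defines, so you were right to supply it), constraining bar lengths between $r$ and $2T$, and collapsing the optimization to two-value configurations — but the concavity step is used in a self-contradictory direction. Strict concavity of the Shannon entropy on the probability simplex forces \emph{minimizers} to the extreme points of the feasible set; maximizers of a strictly concave functional lie in the interior. Yet after invoking concavity to justify restricting to corner configurations, you solve $\partial E/\partial t = 0$ on the two-value slice, which locates an interior critical point, i.e.\ a local maximum of that slice. You need to decide which extremum the bound $Q$ actually comes from — in \cite{atienza2019persistent} the quantity $Q$ is not the value of an optimal entropy but a count of how many bars can be ``long'' relative to the noise floor while remaining consistent with the entropy constraint $E(F) \leq \log n$, so the logic is a feasibility count rather than a single optimization. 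Finally, you correctly flag the realizability issue (that the set of Rips-realizable length tuples need not be convex); this is indeed a real gap that the cited reference handles by exhibiting explicit configurations, and your sketch should not paper over it by appealing to convexity that has not been established.
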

\noindent Furthermore, we require the following result from \cite{atienza2019persistent}, which states that the persistence entropy for a given Rips filtration decreases when one or more of the bars are replaced by a bar with the length specified in the theorem, which depends on the persistence entropy of the remaining bars.
\begin{theorem}[\cite{atienza2019persistent}, Theorem 8]
	\label{th:8}
	Let \( F \) be a filtration with the persistence diagram \( \operatorname{dgm}(F) = \{ (x_i, y_i) \mid 1 \leq i \leq n \} \) and bar lengths \( L = \{ \ell_i = y_i - x_i \} \). For any \( i \in \{1, \ldots, n\} \), define 
	
	\[
	L' = \{ \ell_1', \ldots, \ell_i', \ell_{i+1}, \ldots, \ell_n \} 
	\]
	\noindent with \( \ell_j' = \frac{P_i}{e^{E(R_i)}} \) for \( 1 \leq j \leq i \), where \( R_i = \{ \ell_{i+1}, \ldots, \ell_n \} \) and \( P_i = \sum_{j = i+1}^{n} \ell_j \). Then the persistence entropy satisfies 
	
	\[
	E(L) \leq E(L').
	\]
\end{theorem}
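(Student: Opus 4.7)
The plan is to prove $E(L) \leq E(L')$ in two monotone steps connected by an auxiliary intermediate filtration $\bar L$. First, I would replace the bars $\ell_1,\dots,\ell_i$ by $i$ copies of their arithmetic mean $s/i$ (where $s = S_L - P_i$) to form $\bar L$ and show $E(L) \leq E(\bar L)$. Second, I would show that, among all filtrations obtained from $\bar L$ by changing the common value $c > 0$ of the first $i$ bars while leaving the tail $R_i$ intact, the entropy is maximised precisely at $c = P_i/e^{E(R_i)}$, which is the value $\ell_j'$ in the statement. Chaining these two inequalities gives the claim.

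For the first step, a direct manipulation of the Shannon entropy formula yields
\begin{equation*}
E(\bar L) - E(L) = \frac{s}{S_L}\bigl(\log i - E(L_{1:i})\bigr),
\end{equation*}
where $E(L_{1:i}) = -\sum_{j=1}^i (\ell_j/s) \log(\ell_j/s)$ is the Shannon entropy of the normalised distribution formed by the first $i$ bars. Because the entropy of any distribution on $i$ outcomes is bounded above by $\log i$ (equivalently, the Kullback--Leibler divergence against the uniform distribution on $\{1,\dots,i\}$ is nonnegative), this difference is nonnegative, and hence $E(L) \leq E(\bar L)$.

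For the second step, parameterise the family by the common value $c > 0$ of the first $i$ bars and set $\alpha(c) = ic/(ic + P_i)$. Factorising the normalising denominator in the entropy formula, so that the tail distribution $\{\ell_j/P_i\}_{j>i}$ is exposed as a subprobability scaled by $1-\alpha$, gives
\begin{equation*}
E(L_c) = H(\alpha, 1-\alpha) + \alpha \log i + (1-\alpha)\,E(R_i),
\end{equation*}
with $H(\alpha,1-\alpha) = -\alpha\log\alpha-(1-\alpha)\log(1-\alpha)$. The right-hand side is strictly concave in $\alpha$ (second derivative $-1/[\alpha(1-\alpha)] < 0$), so setting the first derivative $\log((1-\alpha)/\alpha) + \log i - E(R_i)$ to zero yields the unique maximiser $\alpha^\ast = i/(i + e^{E(R_i)})$. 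Back-substituting into $\alpha^\ast = ic^\ast/(ic^\ast+P_i)$ gives $c^\ast = P_i/e^{E(R_i)}$, which is exactly $\ell_j'$. Therefore $E(\bar L) = E(L_{s/i}) \leq E(L_{c^\ast}) = E(L')$, completing the argument.

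The main obstacle is purely bookkeeping: one has to regroup the $n$ summands of $E(L_c)$ into a block of $i$ identical terms and a scaled tail, so that the tail entropy $E(R_i)$ emerges cleanly as the coefficient of $(1-\alpha)$, and likewise convert the telescoping of $\log$'s in step one into the appearance of $\log i - E(L_{1:i})$. Once this regrouping is performed, both steps reduce to standard facts — the boundedness of Shannon entropy by $\log i$ and the strict concavity of the binary entropy — and no further technicalities intervene.
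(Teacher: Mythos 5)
The paper itself states this result by importing it verbatim from Atienza et al.\ (\cite{atienza2019persistent}, Theorem~8) and gives no proof, so there is no in-paper argument to compare against; what follows is therefore an assessment of your proposal on its own terms.

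Your two-step argument is correct. For the first step, the telescoping $\log(\ell_j/S_L)=\log(\ell_j/s)+\log(s/S_L)$ together with $\log\bigl((s/i)/S_L\bigr)=\log(s/S_L)-\log i$ indeed yields
\begin{equation*}
E(\bar L)-E(L)=\frac{s}{S_L}\Bigl(\log i - E(L_{1:i})\Bigr)\ge 0,
\end{equation*}
the nonnegativity being exactly the $\log i$ bound on Shannon entropy of a distribution supported on $i$ atoms. For the second step, the identity
\begin{equation*}
E(L_c)=H(\alpha,1-\alpha)+\alpha\log i+(1-\alpha)\,E(R_i),\qquad \alpha=\frac{ic}{ic+P_i},
\end{equation*}
is the chain rule (grouping property) for Shannon entropy applied to the coarse partition \{first block, tail\}; your derivative computation $\partial_\alpha E(L_c)=\log\frac{1-\alpha}{\alpha}+\log i-E(R_i)$ and the second derivative $-1/[\alpha(1-\alpha)]<0$ are both correct, and back-substitution through the strictly increasing bijection $c\mapsto\alpha(c)$ from $(0,\infty)$ onto $(0,1)$ gives the unique global maximiser $c^\ast=P_i/e^{E(R_i)}$. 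Chaining $E(L)\le E(\bar L)=E(L_{s/i})\le E(L_{c^\ast})=E(L')$ closes the argument. Two remarks. First, you implicitly use that all $\ell_j>0$ (so $s>0$, $P_i>0$), which holds for a persistence diagram with $b_i<d_i$ as assumed. Second, the case $i=n$ in the stated range $i\in\{1,\dots,n\}$ is degenerate: there $R_n=\emptyset$, $P_n=0$, and $\ell_j'=0$, so $L'$ is not a valid barcode; your proof (correctly) requires $1\le i<n$, and you should say so explicitly rather than inherit the sloppy range from the statement.
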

\noindent Based on Theorem~\ref{th:8} and Eq.~\eqref{eq:Q} in Lemma~\ref{lemm:8}, \cite{atienza2019persistent} derive an algorithm that extracts the \( Q \) topological features from a barcode using the following quotient:
\begin{equation}
C = \frac{S_{L_{i-1}'} }{S_{L_i'}} = 
\frac{
	P_{i-1}' + (i - 1) \frac{P_{i-1}'}{e^{E(R_{i-1}')}} 
}{
	P_i' + i \frac{P_i'}{e^{E(R_i')}} .
}
\end{equation}
\noindent This quotient successively modifies the lengths of the bars based on the persistence entropy of the remaining bars. Bars \( [b_i, d_i) \) are classified as noise as long as \( C \geq 1 \) and \( Q < i \). For our further discussion, we derive the following corollary:
\begin{corollary}
	\label{cor:bla}
If an additional bar is added to a barcode with \( n \) bars, a total length \( P \) of the bars, and a persistence entropy \( e^R \), and it is known that this bar represents a topological feature, then its length must satisfy \( \ell > \frac{P}{e^R} \).
\end{corollary}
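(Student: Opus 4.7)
The plan is to apply Theorem~\ref{th:8} together with the feature-extraction quotient just derived, in the minimal nontrivial case of a single distinguished bar. I would work with the augmented barcode of $n+1$ bars obtained by adjoining the new bar of length $\ell$ to the original barcode, and designate the adjoined bar as the distinguished first entry $\ell_1 = \ell$, so that the residual set $R_1 = \{\ell_2, \ldots, \ell_{n+1}\}$ coincides with the original barcode. Under the hypotheses of the corollary, $P_1 = P$ and $E(R_1) = R$, so Theorem~\ref{th:8} supplies the leveling replacement
\[
\ell_1' \;=\; \frac{P_1}{e^{E(R_1)}} \;=\; \frac{P}{e^{R}}.
\]

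Next, I would evaluate the quotient $C$ at $i = 1$ with these data. The pre-replacement total is $S_{L_0} = P + \ell$, and the post-replacement total is $S_{L_1'} = P + P/e^{R}$, so
\[
C \;=\; \frac{P + \ell}{P + P/e^{R}}.
\]
Because the adjoined bar is, by hypothesis, a genuine topological feature, the noise condition $C \ge 1$ from the classification rule must fail (the secondary condition $Q < i$ is vacuous at $i = 1$ provided at least one feature exists, which is guaranteed by that same hypothesis). This forces the strict inequality $S_{L_0} > S_{L_1'}$, which rearranges immediately to $\ell > P/e^{R}$, proving the claim.

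The main obstacle will be pinning down the correct direction of the inequality coming from the algorithm's noise/feature rule, since the quotient $C$ could in principle be written either way and the reasoning is sensitive to the chosen orientation. I would therefore carry out a sanity check in the balanced case $\ell = P/e^{R}$: here the added bar sits exactly at the entropy-equalizing length supplied by Theorem~\ref{th:8}, so replacing it leaves the augmented barcode unchanged, $C = 1$ holds, and the bar is marginal between noise and feature. That calibration fixes the strict threshold $\ell > P/e^{R}$ as the condition singling out genuine features, completing the argument.
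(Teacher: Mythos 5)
The paper states Corollary~\ref{cor:bla} without giving an explicit proof, so there is no argument to compare against directly; what follows is therefore an assessment of your derivation on its own terms. Your setup is sensible: adjoining the new bar as the distinguished entry so that $R_1$ is the original barcode, $P_1 = P$, $E(R_1) = R$, and by Theorem~\ref{th:8} the leveling length is $\ell_1' = P/e^R$, then evaluating $C$ at $i=1$ as $C = (P+\ell)/(P + P/e^R)$. Up to this point the bookkeeping is consistent with the paper's formulas.

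However, the final deduction contains a sign error that undoes the argument. You assert that, because the bar is a feature, the noise condition $C \ge 1$ ``must fail,'' i.e.\ $C<1$. But since $C = S_{L_0}/S_{L_1'}$, the statement $C<1$ is equivalent to $S_{L_0} < S_{L_1'}$, which rearranges to $\ell < P/e^R$ --- the \emph{opposite} of the corollary. In the very next sentence you instead write $S_{L_0} > S_{L_1'}$ and conclude $\ell > P/e^R$; this is what you want, but it corresponds to $C>1$, not to the failure of $C\ge1$. Your sanity check (at $\ell = P/e^R$ one has $C=1$, and the bar is marginal) actually supports the reading ``feature $\Leftrightarrow$ $C>1$,'' so the logic you intend is the right one, but the sentence invoking the ``noise condition must fail'' contradicts it. The correct framing is that the feature test must be \emph{satisfied strictly}, i.e.\ $C>1$, which gives $S_{L_0} > S_{L_1'}$ and hence $\ell > P/e^R$.

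There is a second, more structural gap. The classification rule as quoted from the paper is a conjunction: a bar is declared noise only when \emph{both} $C \ge 1$ and $Q < i$ hold. At $i=1$, and with at least one genuine feature present so that $Q \ge 1$, the clause $Q < i$ is false, and hence the conjunction is false regardless of $C$. Thus knowing the bar is ``not noise'' at $i=1$ imposes no constraint on $C$ at all; the implication from ``feature'' to any inequality on $\ell$ does not flow through the stated rule at $i=1$. You flag $Q<i$ as ``vacuous,'' but vacuity here makes the noise test automatically fail, which means it cannot be used to extract the threshold. A correct proof needs to argue directly from the entropy-leveling threshold of Theorem~\ref{th:8} --- the added bar is separable as a feature precisely when it strictly exceeds the leveling length $\ell_1' = P/e^R$ --- or it needs to invoke the feature-count clause and Lemma~\ref{lemm:8} more carefully rather than relying on the $i=1$ instance of the noise test.
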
 
\noindent From the above results, we can now estimate the bottleneck distance that arises solely from the representability of torsion in the latent space between the barcodes of the input and latent space.

\begin{proposition}
	Let \( X \subset \mathbb{R}^d \) be the input space for an autoencoder with a latent space \( L \subset \mathbb{R}^{\lambda} \) with \( \lambda \ll d \). If the persistent homology of \( X \) exhibits \( q \)-torsion in homology dimension \( p \geq \lambda - 1 \) and this torsion is represented in a lower dimensional homology group $p<\lambda-1$ in the latent space, this leads to the presence of an additional bar in the $p$-dimensional barcode of the latent space over a finite field \( \mathbb{Z}/q'\mathbb{Z} \) with a minimum length of 
	
	\[
	\frac{P}{e^R},
	\]
	\noindent where \( P \) is the total length of the bars in the barcode of the input space and \( e^R \) is the persistence entropy of the input space. The resulting minimal bottleneck distance between the input space and the latent space for at least one homology dimension is 
	
	\[
	\min \frac{d_i - b_i}{2},
	\]
	\noindent where \( [b_i, d_i) \) are the points in the persistence diagram of the input space.
\end{proposition}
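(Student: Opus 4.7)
The plan is to combine Proposition~\ref{prop:propo1}, Corollary~\ref{cor:bla}, and the standard geometric interpretation of the bottleneck distance. First I would invoke Proposition~\ref{prop:propo1} to exhibit a prime $q'$ and a homology dimension $p<\lambda-1$ such that the $p$-dimensional barcode of the latent space over $\mathbb{Z}/q'\mathbb{Z}$ contains at least one additional bar that has no counterpart in the corresponding barcode of the input space. The existence of this extra bar follows directly from the Euler-characteristic balancing argument already carried out in the proof of Proposition~\ref{prop:propo1}.

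Next I would apply Corollary~\ref{cor:bla} to this additional bar. Because the hypothesis is that the $q$-torsion of the input is genuinely represented in the latent space, the new bar must be interpreted as a bona fide topological feature rather than noise, and thus falls within the scope of the corollary. This yields the lower bound $\ell > P/e^R$, where $\ell$ is the length of the extra bar, $P$ the total bar length, and $e^R$ the persistence entropy of the input-space barcode in dimension $p$. This establishes the length estimate in the statement.

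For the bottleneck estimate I would unfold the definition of $d_B$: every admissible partial matching between the two diagrams sends each point either to a point of the other diagram or to the diagonal, and the bottleneck distance is the infimum over matchings of the supremum $L_\infty$-cost. Since in dimension $p$ the latent diagram has strictly more points than the input diagram, at least one point must end up paired with the diagonal in any admissible matching. For a bar $[b_i,d_i)$ sent to the diagonal, that pairing costs $(d_i-b_i)/2$ in $L_\infty$. Taking the infimum over all such admissible matchings produces the lower bound $\min (d_i-b_i)/2$ claimed in the proposition.

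The main obstacle, and where I would spend most care, is the final step: pinning down precisely which bar the minimum is taken over, since a new latent bar can be matched to an input bar, forcing the input bar (or a further reassigned one) onto the diagonal. I would make this rigorous by reformulating the bottleneck distance as a minimum-weight perfect matching on the diagrams augmented with their orthogonal projections onto the diagonal, and then observing that at least one diagonal pairing is unavoidable because of the point-count imbalance. The worst-case cost of any such matching is then bounded below by the smallest available half-length $(d_i-b_i)/2$, which gives the claim.
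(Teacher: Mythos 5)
Your proposal is correct and follows essentially the same route as the paper's proof: both invoke Corollary~\ref{cor:bla} to obtain the length bound $P/e^R$ on the additional bar (yours via explicit reference to Proposition~\ref{prop:propo1} for its existence, the paper via a one-line restatement of the same reasoning), and both then argue that the surplus bar forces at least one point to be matched to the diagonal in any admissible bottleneck matching, yielding the $\min(d_i-b_i)/2$ lower bound. Your final paragraph spelling out the perfect-matching reformulation to handle the cascading reassignment is slightly more explicit than the paper's case distinction, but the underlying idea is the same.
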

\begin{proof}
A \( q \)-torsion cycle that is identified as a cycle over \( \mathbb{Z} / q \mathbb{Z} \) constitutes a topological feature over this coefficient field. Corollary~\ref{cor:bla} implies that any bar representing this torsion feature must have a length \( > \frac{P}{e^R} \), with the input space being the original space. Adding an additional bar to the barcode of the latent space results in at least one existing bar becoming unmatched and therefore being matched to the diagonal. If the newly added bar is closer to an already existing bar, it will be matched to this bar rather than the diagonal. Else, the resulting bottleneck distance will be $\geq P/2e^R$. Consequently, the shortest possible unmatched bar in the latent space is the shortest bar in the input space, leading to the result.
\end{proof}
\noindent From the above discussion, it follows that the representability of high-dimensional torsional structures as torsional structures in lower-dimensional homology groups in the latent space leads to a change in persistence entropy during the encoding process.

\section{Torsion Reconstruction in the Decoder}
If torsional structures in the input data can be transformed into the latent space via the encoder, such that they manifest in the homology groups of a filtration of the latent space, important follow-up questions arise. 
\begin{enumerate}[label=\Roman*.]
	\setcounter{enumi}{3}  
	\item Given that the autoencoder is trained using a loss function based on reconstruction in the output space, can the decoder accurately reconstruct torsion?
	\item If so, under which mathematical conditions?
\end{enumerate}

\subsection{Linear Decoders}
\label{sec:lindec}
For decoders with \textit{linear activation functions}, which are indeed used in research, especially for low-dimensional input data, statements about the reconstructability of torsion can be derived from dimension-based arguments.\\

\noindent Already in dimension 3, one can construct intuitive and instructive torsional examples, such as the triple and double loop point clouds depicted in Figure~\ref{fig:tors3loop} and Figure~\ref{fig:4a}. Yet, torsion seems to be rare in $\mathbb{R}^3$, at least in randomly generated datasets \cite{obayashi2023field}. 
\begin{proposition}
Under Assumption~\ref{ass:euclid}, a decoder with a linear activation function cannot map a two-dimensional latent space \( L \) to three-dimensional input data in a way that preserves torsional structures in the reconstructed data.
\end{proposition}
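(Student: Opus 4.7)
The plan is to exploit the rigid geometry that a purely linear decoder imposes. Under Assumption~\ref{ass:euclid} together with linearity of the activation, $f_{\mathrm{dec}}: \mathbb{R}^2 \to \mathbb{R}^3$ is an affine map $z \mapsto Wz + b$ with $W \in \mathbb{R}^{3 \times 2}$ and $b \in \mathbb{R}^3$. Its image lies in an affine subspace $A \subset \mathbb{R}^3$ of dimension $\operatorname{rank}(W) \leq 2$, and fixing an isometry $A \cong \mathbb{R}^2$ the reconstructed point cloud is isometrically realized as a finite subset of $\mathbb{R}^2$. Since the Vietoris--Rips filtration is determined by pairwise distances alone, the filtration on the reconstructed data coincides with the Rips filtration of this isometric copy in $\mathbb{R}^2$.

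The next step is to replay the argument of the proposition in Section~\ref{sec:dimred} with ambient dimension $n = 2$. By Corollary~\ref{cor:alexdual}, every absolute integral homology group arising in the Rips filtration vanishes in dimensions $i \geq 2$ and is torsion-free in dimensions $0$ and $1$, so absolute torsion is ruled out entirely. Invoking additionally \cite{obayashi2023field}, Corollary~1.8, the $(n-1)$-dimensional, i.e.\ $H_1$, persistent homology is also torsion-free; via Remark~\ref{rem:torsfiltsimpcomp} this excludes relative torsion in that dimension as well. Combined, no subcomplex in the Rips filtration of the reconstructed cloud can carry a torsion class over $\mathbb{Z}$, so the decoder's output exhibits no $q$-torsion for any prime $q$, regardless of whatever torsion was present in the input space or in the latent representation.

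The main obstacle is simply that $\lambda = 2$ sits outside the regime $\lambda > 2$ of the earlier proposition in Section~\ref{sec:dimred}, so one must redo the boundary argument by hand rather than cite that result directly. A minor point to verify is the degenerate case $\operatorname{rank}(W) < 2$, in which the image collapses into a line or a single point; there the reconstructed data sits inside $\mathbb{R}^1 \subset \mathbb{R}^2$ and the same Alexander-duality argument applies a fortiori, so the conclusion is unaffected.
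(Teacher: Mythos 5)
Your proof is correct and takes essentially the same route as the paper's: both observe that a linear decoder out of a two-dimensional latent space has image inside an at-most two-dimensional affine subspace of $\mathbb{R}^3$, so the reconstructed point cloud is (up to isometry) a planar point cloud, and both then invoke Obayashi's Corollaries 1.7 and 1.8 to conclude that its Rips persistence diagrams in degrees $0$ and $1$ are independent of the coefficient field and hence torsion-free. You are somewhat more careful than the paper in two places — making the affine-image-plus-isometry reasoning explicit where the paper appeals loosely to rank--nullity and Assumption~\ref{ass:euclid} to write ``$X' \subset \mathbb{R}^2$,'' and explicitly dispatching the degenerate case $\operatorname{rank}(W) < 2$ — and you also pivot the argument to the reconstructed cloud itself rather than to the latent space as the paper does. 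Your additional appeal to Alexander duality (Corollary~\ref{cor:alexdual}) is redundant given Obayashi's Corollary~1.8, and it inherits the same caveat as the paper's own use of it in Section~\ref{sec:dimred}: a Vietoris--Rips complex built on points in $\mathbb{R}^2$ is an abstract simplicial complex that need not itself embed as a subspace of $\mathbb{R}^2$, so Hatcher's statement does not literally apply to it; the Obayashi corollaries, which are phrased in terms of the ambient dimension of the point cloud, are the cleaner tool here.
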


\begin{proof}
By Corollary 1.7 to Theorem~\ref{th:obay} in \cite{obayashi2023field}, persistence diagrams $D_p(K; k)$ for a filtered complex $K$ with coefficients in a field $k$ are independent of the choice of the field $k$ for homology dimension $p=0$. By the subsequent Corollary 1.8 in \cite{obayashi2023field}, for a filtered simplicial complex $K$ embedded in an Euclidean space $\mathbb{R}^M$, the persistent diagrams of homology dimension $p=M-1$ are independent of the choice of coefficient field $k$. Thus, for the filtered simplicial complex $K_L \subset \mathbb{R}^2$ constructed on the elements of the latent space $L$, all possible persistence diagrams $D_0(K_L; k )$ and $D_1(K_L; k)$ are independent of the choice of the coefficient field $k$ and the respective integral homology groups are torsionfree.
By the rank-nullity theorem, a linear decoder maps this two-dimensional latent space to a two-dimensional output space, which by Assumption~\ref{ass:euclid} is $X' \subset \mathbb{R}^2$. Thus, an autoencoder with a two-dimensional latent space and a linear decoder cannot reconstruct torsion from three dimensional input data. 	
\end{proof}

\begin{corollary}
	Let $X \subset \mathbb{R}^d$ be the input data to an autoencoder, exhibiting a nontrivial torsion subgroup in its $p$-dimensional homology group with $p>\lambda-1$. Let the encoder transform $X$ to a $\lambda<<d$ dimensional latent space $L$, which is a subset of a Euclidean space $\mathbb{R}^{\lambda}$ by Assumption~\ref{ass:euclid}. A decoder with a linear activation architecture cannot fully reconstruct the torsion in $X$. 
\end{corollary}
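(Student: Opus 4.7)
The plan is to reduce the corollary to the dimensionality-reduction Proposition already established in Section~\ref{sec:dimred}, by observing that a decoder with linear activations collapses the effective ambient dimension of its own image to at most $\lambda$.

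To begin, under Assumption~\ref{ass:euclid} combined with linear activations, each layer of the decoder acts as an affine map, so the composition $f_{\mathrm{dec}}: \mathbb{R}^{\lambda} \to \mathbb{R}^{d}$ is itself affine. By the rank--nullity theorem applied to its linear part, the image of $f_{\mathrm{dec}}$, and hence the reconstruction $X' = f_{\mathrm{dec}}(f_{\mathrm{enc}}(X))$, is contained in an affine subspace $A \subset \mathbb{R}^{d}$ of dimension at most $\lambda$. The Euclidean metric inherited from $\mathbb{R}^{d}$ restricts to the standard metric on $A$, so $X'$ can be identified, up to an isometry of the ambient space, with a finite point cloud in $\mathbb{R}^{\lambda}$; any Rips filtration on $X'$ then coincides, combinatorially, with the Rips filtration on this image.

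Treating $X'$ as a finite metric subset of $\mathbb{R}^{\lambda}$, I would now invoke the dimensionality-reduction Proposition of Section~\ref{sec:dimred} verbatim: Alexander duality (Corollary~\ref{cor:alexdual}) forces the absolute integral homology of the Rips subcomplexes of $X'$ to vanish in dimensions $\geq \lambda$ and to be torsion-free in dimensions $\lambda - 1$ and $\lambda - 2$, while Theorem~\ref{th:obay}, combined with Remark~\ref{rem:torsfiltsimpcomp}, handles the persistent and relative parts in dimension $\lambda - 1$. Since by hypothesis the input torsion sits in dimension $p > \lambda - 1$, i.e., $p \geq \lambda$, the corresponding integral homology of every Rips subcomplex of $X'$ vanishes, and no $q$-torsion in dimension $p$ can appear in the persistent homology of the reconstruction. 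The required non-reconstructibility follows.

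The main subtlety, which I expect to be the principal obstacle, is the edge case in which the linear part of $f_{\mathrm{dec}}$ has rank strictly smaller than $\lambda$: the image then lives in an even lower-dimensional subspace, and the argument above still applies via a Euclidean space of dimension $< \lambda$, only strengthening the conclusion, but this must be spelled out cleanly so as not to confuse the \emph{algebraic} dimension of the image with the \emph{nominal} latent dimension $\lambda$. A second, more technical point is the uniform treatment of relative homology across the filtration; here one must be consistent with the implicit convention already adopted in the preceding Proposition, namely treating Rips subcomplexes of a point cloud in $\mathbb{R}^{\lambda}$ as compact and locally contractible spaces to which Corollary~\ref{cor:alexdual} applies, and relying on Theorem~\ref{th:obay} to upgrade this to a statement about the persistence module rather than a single filtration step.
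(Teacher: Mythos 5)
Your proof is correct and takes essentially the same approach as the paper's: reduce via rank--nullity to a point cloud lying in a $\lambda$-dimensional Euclidean space, then invoke the Alexander-duality and Obayashi constraints from Section~\ref{sec:dimred} to rule out torsion in any homology dimension $p > \lambda - 1$ in the reconstruction. You spell out the final homological step and the lower-rank edge case that the paper's terse two-sentence proof leaves implicit, both correctly.
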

\begin{proof}
	Torsion subgroups of the integral homology groups can potentially appear up to homology dimension $p=d-1$ by definition. By the rank-nullity theorem, the output $X'$ reconstructed from the latent space $L$ with a linear decoder has dimension $\leq \lambda$ and is embedded in a Euclidean space $\mathbb{R}^{\lambda}$ by Assumption~\ref{ass:euclid}. 

\end{proof}

\begin{corollary}
	\label{cor:1}
	Let	$f_{\mathrm{dec}}:= f_\lambda \circ f_{\lambda+(d-n_{l_1})}\circ ... \circ f_{\lambda+(d-n_{l_N})}$ denote the decoder of an autoencoder with nonlinear activation architecture and a $\lambda$-dimensional latent space, $\lambda < d$, consisting of $N$ layers with natural numbers $\lambda \leq n_{l_i} \leq d-1 $, $0 \leq i \leq N$. Let the subscript denote the dimension of the output space of the respective layer under consideration. Then torsion within homology groups of homology dimension $p < d-n_{l_{i+1}} -1 $ is reconstructible in one layer transition from $f_{\lambda+(d-n_{l_i})}$ to $f_{\lambda+(d-n_{l_{i+1}})}$ under the condition that the decoder architecture is able to reconstruct torsion.
\end{corollary}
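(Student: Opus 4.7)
The plan is to reduce Corollary~\ref{cor:1} to a single invocation of Alexander duality (Corollary~\ref{cor:alexdual}) applied to the output of the $(i+1)$-th decoder layer, exploiting the fact that this output is constrained to lie in an explicitly known Euclidean space by Assumption~\ref{ass:euclid}.

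First I would fix the setup for a single layer transition. By Assumption~\ref{ass:euclid} together with the convention that the subscript of each $f_j$ is its output dimension, the image of $f_{\lambda+(d-n_{l_{i+1}})}$ sits inside $\mathbb{R}^{\lambda+(d-n_{l_{i+1}})}$. Build a Vietoris--Rips filtration on the (finite) push-forward of the latent point cloud after this layer. Each subcomplex is a finite simplicial complex, hence a finite CW complex, and therefore compact and locally contractible (\cite{hatcher}, pp.~197, 520). This is exactly the hypothesis needed for Corollary~\ref{cor:alexdual}.

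Next I would apply Corollary~\ref{cor:alexdual} to each such subcomplex $X \subset \mathbb{R}^{\lambda+(d-n_{l_{i+1}})}$. The corollary tells us that $H_i(X;\mathbb{Z})$ vanishes for $i$ above the ambient dimension and is torsion-free in the two highest possible dimensions. Consequently, torsion in $H_p(X;\mathbb{Z})$ can only appear for $p$ at most the ambient dimension minus three, which under the stated subscript convention is the bound $p < d - n_{l_{i+1}} - 1$ appearing in the corollary. By Remark~\ref{rem:torsfiltsimpcomp} the same restriction carries over to the relative homology groups of the filtration, so the entire persistent torsion signal at this layer is confined to homology dimensions strictly below this threshold. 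The clause ``under the condition that the decoder architecture is able to reconstruct torsion'' is what promotes this from a necessary obstruction into the realizability statement that torsion in admissible dimensions can pass through the transition: Alexander duality rules out higher-dimensional torsion unconditionally, whereas lower-dimensional torsion is merely not forbidden by ambient topology.

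The main obstacle, as I see it, is not the topological step — which is a direct application of Corollary~\ref{cor:alexdual} — but the indexing bookkeeping. One must be careful to track which dimension refers to the output of layer $i$ versus layer $i+1$, to ensure that the three units subtracted in Alexander duality are correctly aligned with $d - n_{l_{i+1}} - 1$, and to emphasize that the conclusion is a \emph{per-transition} statement: torsion that has already been destroyed in an earlier layer cannot be revived downstream, so the full decoder's reconstruction capacity is bounded by the tightest such condition across the sequence $i = 1, \ldots, N-1$. A short sentence clarifying this monotone tightening, together with a clean restatement of why the ambient dimension at layer $i+1$ governs the transition (and not at layer $i$), should suffice to complete the argument.
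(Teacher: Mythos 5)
The paper gives no explicit proof of Corollary~\ref{cor:1} — it is stated to follow from the linear-decoder results immediately above — so there is no canonical argument to compare against. Your instinct to ground the claim in Corollary~\ref{cor:alexdual} applied to the output space of the $(i+1)$-th layer is the right one and matches the style of the proof of the dimensionality-reduction proposition in Section~\ref{sec:dimred}. However, the central arithmetic step is wrong. You correctly place the layer output in $\mathbb{R}^{\lambda+(d-n_{l_{i+1}})}$, and Alexander duality then restricts torsion to homology dimensions $p \leq \lambda + (d-n_{l_{i+1}}) - 3$, i.e.\ $p < \lambda + (d-n_{l_{i+1}}) - 2$. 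That is \emph{not} the bound $p < d - n_{l_{i+1}} - 1$ that the corollary states; the two differ by $\lambda - 1$ and coincide only when $\lambda = 1$. Your write-up asserts that the Alexander bound ``is the bound appearing in the corollary'' without doing this subtraction, which papers over either a typo in the corollary (plausible, given the awkward indexing) or a genuine strengthening that would need its own justification. Either way, this needs to be flagged and resolved rather than asserted.

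A second, smaller issue: your closing remark that ``torsion that has already been destroyed in an earlier layer cannot be revived downstream'' contradicts the paper's own computational findings. Section~\ref{sec:highdpc} reports that across all high-dimensional experiments the latent space showed no torsion, yet torsion reappeared in the decoder output. This is unsurprising once one recalls that each layer produces a new point cloud whose Rips filtration is computed afresh in a new ambient dimension — Alexander duality is a per-layer upper bound, not a monotone invariant along the forward pass. So the proposed ``monotone tightening'' across transitions $i = 1, \ldots, N-1$ is not a valid reading of the corollary and should be dropped; the statement really is per-transition, conditional on the architecture's (unspecified) ability to realize torsion at all, which is precisely the ``under the condition'' clause you correctly identified.
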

\noindent Corollary~\ref{cor:1} immediately raises the following questions:
\begin{enumerate}[label=\Roman*.]
	\setcounter{enumi}{5}  
	\item Is torsion reconstructible with a nonlinear decoder?	
\end{enumerate}

\subsection{Nonlinear Decoders}
We investigate Question (VI.) experimentally with three different autoencoder architectures: A vanilla architecture, a topological autoencoder and an RTD autoencoder. The latter are described in detail in Section~\ref{app:AE} of the appendix. We used an NVIDIA GeForce RTX 4080 GPU for the experiments in Section~\ref{sec:doubtriploop}. For the experiments in Section~\ref{sec:highdpc}, we relied on a high-performance node of the IAV Kubeflow cluster equipped with dual AMD EPYC 7742 CPUs (128 cores, 256 threads) and 1~TiB of RAM. To investigate whether torsion was present in the data—and, if so, for which prime divisor—we employed the algorithm introduced in \cite{obayashi2023field}, which outputs a tuple of the form $\mathbf{(q^k, \textit{index of the first simplex causing torsion})}$.
 
\subsubsection{Prime Divisors of Torsion Coefficients}
As a first small experiment to gain an intuition for the sensitivity of torsion phenomena to slight perturbations of the data, we added Gaussian noise to the points of the triple loop and subsequently investigated whether and for which prime divisor $q$-torsion was present in the data. An excerpt of the results are shown in Figure~\ref{fig:comp}.
\begin{figure}[htbp]
	\centering
	\begin{subcaptionbox}{$2$-torsion\label{subfiga}}[0.48\columnwidth]
		{\includegraphics[width=\linewidth]{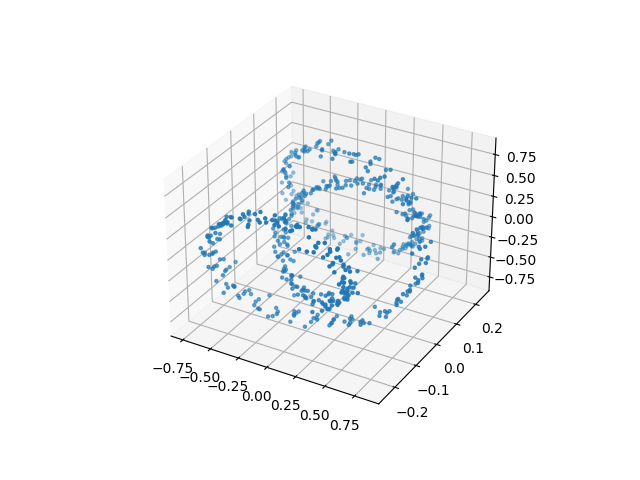}}
	\end{subcaptionbox}
	\hfill
	\begin{subcaptionbox}{$3$-torsion\label{subfigb}}[0.48\columnwidth]
		{\includegraphics[width=\linewidth]{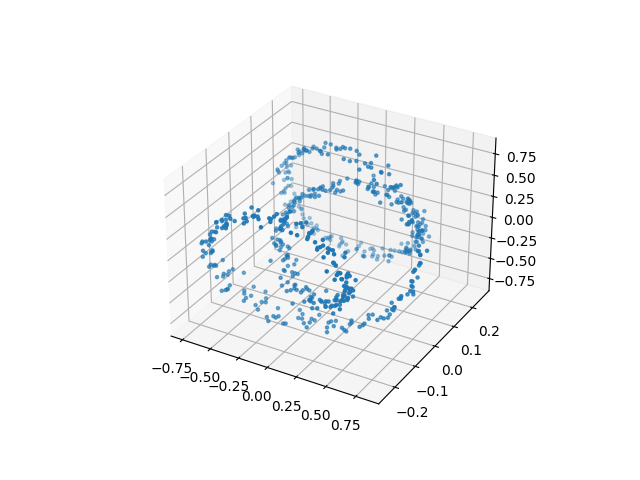}}
	\end{subcaptionbox}
	\caption{Triple-loop with Gaussian noise and standard deviation 0.02.}
	\label{fig:comp}
\end{figure}

\noindent The two point clouds exhibit a MSE of 0.000816 and albeit they look almost identical, in Figure~\ref{subfiga} the point cloud has $2$-torsion and in Figure~\ref{subfigb} $3$-torsion was identified by the algorithm in \cite{obayashi2023field}. Hence, in this case, a shift of the points in the noisy triple loop that is negligible in terms of MSE leads to a misinterpretation of the structure as a noisy double loop, as indicated by its prime divisor.

\subsubsection{Reconstruction of Double and Triple Loops}
\label{sec:doubtriploop}
We first investigate the reconstructability of torsion in autoencoders using the double and triple-loop point clouds as examples. We consistently use the following layer architecture:

\[
3 \rightarrow 32 \rightarrow 32 \rightarrow 2 \rightarrow 32 \rightarrow 32 \rightarrow 3
\]

\noindent i.e., we employ a two-dimensional latent space. We train using batches of the input data and evaluate using the entire data set. ReLU nonlinearities were used. We employed batch normalization for better training stability. 
\\
\noindent As we use only one data set for training and testing respectively, we are clearly in the domain of overfitting. Thus we fixed for all three autoencoder models (namely the vanilla, the TopoAE and the RTD autoencoder) the learning rate to $10^{-3}$ and the batch size to $32$. For the TopoAE and the RTD autoencoder, we employed hyperparameter optimization for the parameter(s) regularizing the \enquote{TDA} loss term (namely $\eta$ in Eq.~\eqref{eq:topoae} and $\chi$ in Eq.~\eqref{eq:rtdloss}).\\
For hyperparameter tuning we applied Bayesian optimization using the scikit-optimize library \cite{scikit-optimize}. The best value for the regularization parameter~$\eta$ was selected based on the total loss (reconstruction loss plus regularization term) computed on a single training batch. We chose this single-batch approach as the values of $\eta$ across the individual batches are close to each other, and since we only have one dataset with few points, we had to strike a balance between hyperparameter optimization and overfitting.\\
\noindent The loss was minimized by fitting a Gaussian process to the observed values and iteratively selecting promising~$\eta$ and $\chi$ values using an acquisition function. This was done for 20 calls per method. The best-performing~$\eta$ respectively $\chi$ was then used to train the final model on the full training set.\\
\noindent Each model was trained for $100$ epochs. We repeated this training cycle for 40 times. All neural network architectures were fit using Adam. 

\paragraph{Vanilla Autoencoder:}

In Figure~\ref{fig:vanillaAE}, for the double loop, the MSE loss versus the 40 runs is displayed and the torsional runs in the output are marked. In Figure~\ref{fig:vanillaloop} the three outputs with the smallest MSE losses and the three torsional outputs with the smallest MSE losses are displayed. The results are summarized in Table~\ref{tab:vanilla}. The comparison shows that the reconstructability of torsion is not coupled to minima in the MSE loss. As a consequence, the MSE loss is not informative with regard to the reconstructability of torsion. 

\begin{figure}[ht]
	\centering
	\includegraphics[width=\columnwidth]{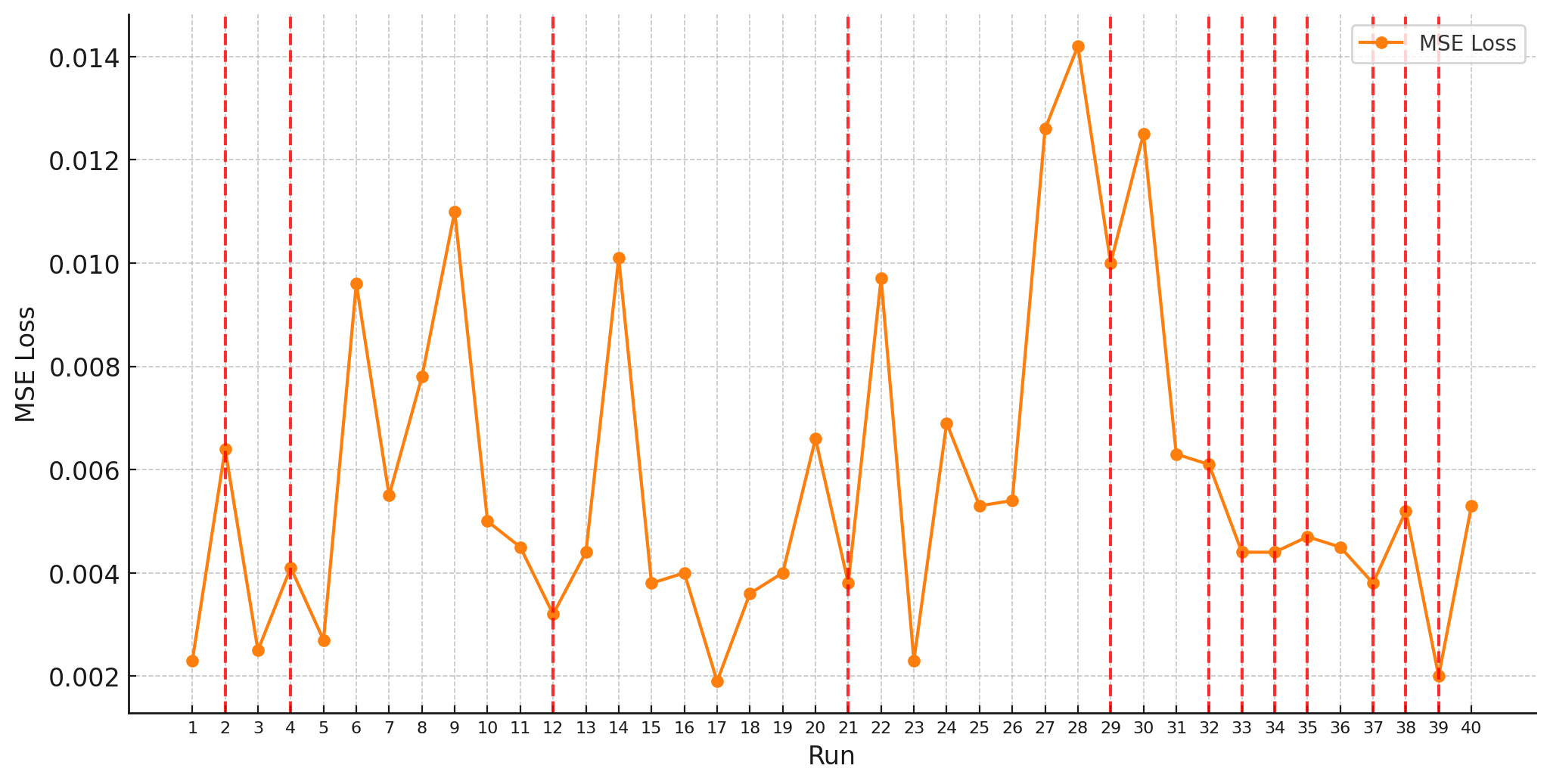}
	\caption{MSE loss over 40 training cycles using the vanilla autoencoder on the double loop input. Training cycles that produced torsional outputs are indicated by red vertical lines.}
	\label{fig:vanillaAE}
\end{figure}

\begin{table}[ht]
	\centering
	\begin{tabular}{|c|c|c|}
		\hline
		\textbf{Run} & \cellcolor{orange!40}\textbf{MSE Loss} & \textbf{Torsion} \\
		\hline
		\textbf{17} & 0.0019 & No Torsion \\
		\textbf{39} & 0.0020 & \textbf{Torsion: (2,8701)} \\
		\textbf{1} & 0.0023 & No Torsion \\
		\hline
	\end{tabular}
	\caption{The three runs with the lowest MSE loss for the vanilla autoencoder trained on the double loop input.}
	\label{tab:vanilla}
\end{table}

\noindent For the triple loop, torsion could not be reconstructed. The three outputs with the lowest MSE loss are displayed in Figure~\ref{fig:mod3fig}. 

\paragraph{Topological Autoencoder:}
For the training of the topological autoencoder, the regularization parameter $\eta$ was selected via hyperparameter optimization from the log-uniform range $[0.1, 3]$.\\
\noindent Figure~\ref{fig:torsiontopoAE} displays the MSE, the topological (\enquote{Topo}) and total loss for 40 training cycles for the double loop. Table~\ref{tab:topo} summarizes the results for the three outputs with the least MSE, Topo and total loss respectively. Visualizations of the outputs can be found in Figure~\ref{fig:topoae_visuals}. Again, the outputs with the least losses did not coincide with the torsional ones. 

\begin{figure}[ht]
	\centering
	\includegraphics[width=\columnwidth]{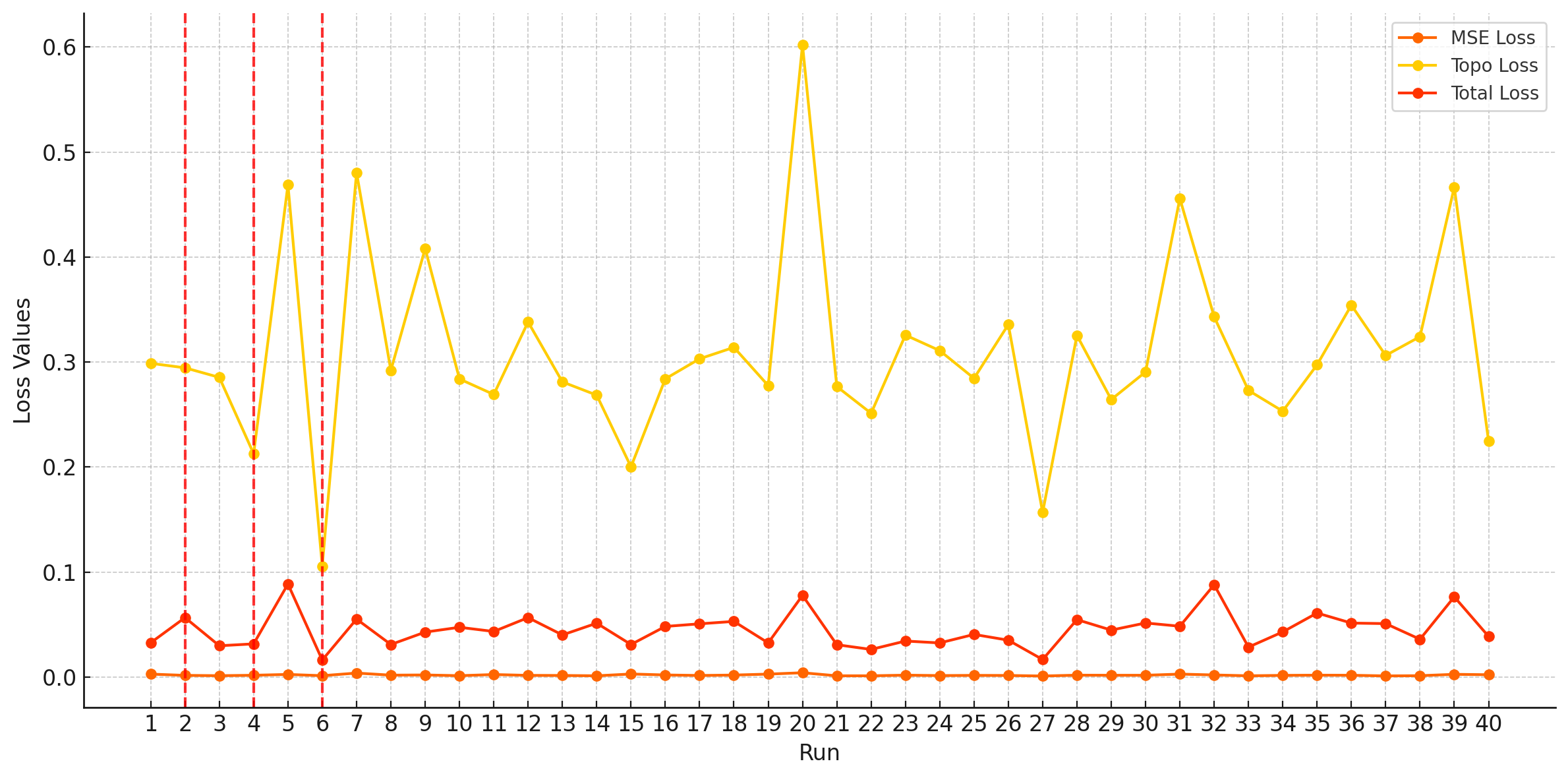}
	\caption{MSE loss, topological loss, and total loss over 40 training cycles of the topological autoencoder trained on the double loop input. Training cycles that resulted in torsion are marked by red vertical lines.}
	\label{fig:torsiontopoAE}
\end{figure}

\begin{table}[ht]
	\centering
	\resizebox{\columnwidth}{!}{%
		\begin{tabular}{|c|c|c|c|c|}
			\hline
			\textbf{Run} & \cellcolor{orange!40}\textbf{MSE Loss} & \cellcolor{yellow!40}\textbf{Topo Loss} & \cellcolor{red!40}\textbf{Total Loss} & \textbf{Torsion} \\
			\hline
			\textbf{27} & 0.0010 & - & - & No Torsion \\
			\textbf{37} & 0.0011 & - & - & No Torsion \\
			\textbf{14} & 0.0012 & - & - & No Torsion \\
			\hline
			\hline
			\textbf{6}  & - & 0.1057 & - & \textbf{Torsion: (2, 8833)} \\
			\textbf{27} & - & 0.1568 & - & No Torsion \\
			\textbf{15} & - & 0.2002 & - & No Torsion \\
			\hline
			\hline
			\textbf{6}  & - & - & 0.0162 & \textbf{Torsion: (2, 8833)} \\
			\textbf{27} & - & - & 0.0167 & No Torsion \\
			\textbf{22} & - & - & 0.0263 & No Torsion \\
			\hline
		\end{tabular}
	}
	\caption{Three runs with the lowest MSE, Topo, and total loss values for the double loop as input. Header colors match the respective loss colors in Figure~\ref{fig:torsiontopoAE}. Torsional runs are highlighted in bold.}
	\label{tab:topo}
\end{table}

\noindent For the triple loop, torsion was not reconstructable. The results are summarized in Table~\ref{tab:topomod3}. The outputs with the lowest respective losses are visualized in Figure~\ref{fig:topomod3_visuals}.

\begin{table}[ht]
	\centering
	\resizebox{\columnwidth}{!}{%
		\begin{tabular}{|c|c|c|c|c|}
			\hline
			\textbf{Run} & \cellcolor{orange!40}\textbf{MSE Loss} & \cellcolor{yellow!40}\textbf{Topo Loss} & \cellcolor{red!40}\textbf{Total Loss} & \textbf{Torsion} \\
			\hline
			\textbf{13} & 0.0021 & - & - & No Torsion \\
			\textbf{33} & 0.0025 & - & - & No Torsion \\
			\textbf{15} & 0.0026 & - & - & No Torsion \\
			\hline
			\hline
			\textbf{13} & - & 0.2695 & - & No Torsion \\
			\textbf{33} & - & 0.2698 & - & No Torsion \\
			\textbf{21} & - & 0.2840 & - & No Torsion \\
			\hline
			\hline
			\textbf{13} & - & - & 0.0290 & No Torsion \\
			\textbf{33} & - & - & 0.0295 & No Torsion \\
			\textbf{28} & - & - & 0.0318 & No Torsion \\
			\hline
		\end{tabular}
	}
	\caption{Three runs with the lowest MSE, Topo, and total loss values for the triple loop as input. All runs are non-torsional.}
	\label{tab:topomod3}
\end{table}

\paragraph{RTD Autoencoder:}

For the RTD autoencoder, the 100 training epochs where split in three phases as described in \cite{trofimov2023learningtopologypreservingdatarepresentations}. Initially, the autoencoder was trained for 10 epochs using only the reconstruction loss with a learning rate of $10^{-4}$, and then the training was continued with the RTD loss. Epochs 11--30 used a learning rate of $10^{-2}$, epochs 31--50 used $10^{-3}$, and all subsequent epochs used $10^{-4}$. The regularization parameter~$\chi$ was selected via hyperparameter optimization from the log-uniform range $[10^{-6}, 10^{3}]$.\\
The results for the double loop point cloud are displayed in Table~\ref{tab:rtdresults} and Figure~\ref{fig:torsionrtd}. Interestingly, the three runs with the lowest MSE loss now exhibited torsion while this was not the case for the respective three runs with the lowest RTD loss and total loss. Apparent is also that, compared to the results displayed for the vanilla autoencoder in Figure~\ref{fig:vanillaAE} and the topological autoencoder in Figure~\ref{fig:torsiontopoAE}, with the RTD autoencoder 25 of 40 runs exhibited 2-torsional outputs. However, again, a low RTD loss is not informative with regard to the reconstructability of torsional structures, as becomes apparent in Table~\ref{tab:rtdresults}: Only one out of the three runs with the lowest RTD loss exhibit torsion. The $3$D plots corresponding to Table~\ref{tab:rtdresults} are depicted in Figure~\ref{fig:rtd_table_visuals}.

\begin{figure}[H]
	\centering
	\includegraphics[width=\columnwidth]{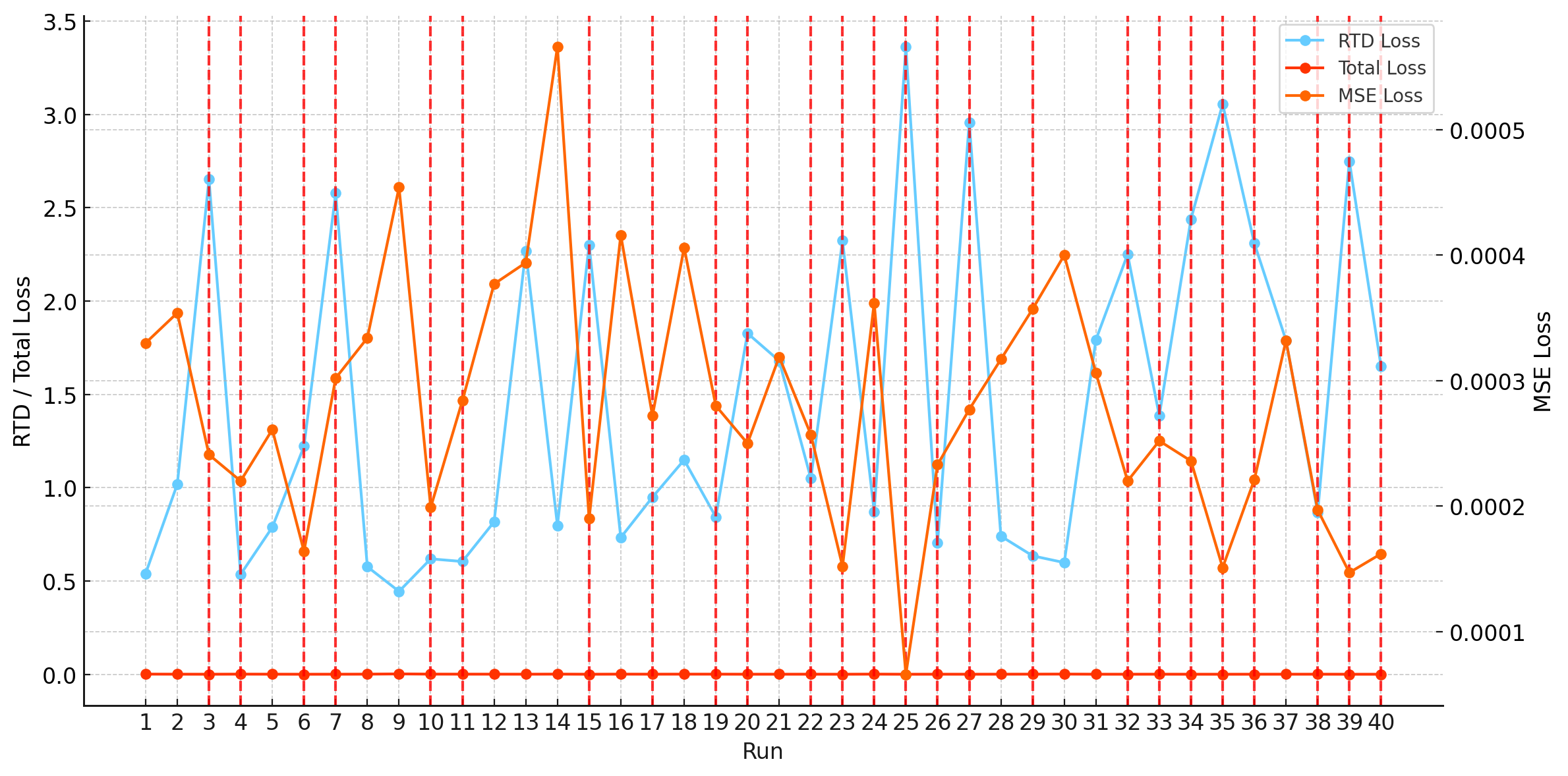}
	\caption{MSE Loss, RTD loss and total loss for 40 training cycles with the RTD autoencoder. Torsional training cycles are marked with red vertical lines. To be able to discern the total loss, which lies quite close to the MSE loss, a double labeled $y$-axis was used. }
	\label{fig:torsionrtd}
\end{figure}

\begin{table}[H]
	\centering
	\resizebox{\columnwidth}{!}{%
		\begin{tabular}{|c|c|c|c|c|}
			\hline
			\textbf{Run} & \cellcolor{orange!40}\textbf{MSE Loss} & \cellcolor{cyan!30}\textbf{RTD Loss} & \cellcolor{red!40}\textbf{Total Loss} & \textbf{Torsion} \\
			\hline
			\textbf{25} & 0.000066 & -        & -         & \textbf{Torsion: (2, 6775)} \\
			\textbf{39} & 0.000147 & -        & -         & \textbf{Torsion: (2, 6787)} \\
			\textbf{35} & 0.000151 & -        & -         & \textbf{Torsion: (2, 6783)} \\
			\hline
			\hline
			\textbf{9}  & -        & 0.444102 & -         & No Torsion \\
			\textbf{4}  & -        & 0.535345 & -         & \textbf{Torsion: (2, 6619)} \\
			\textbf{1}  & -        & 0.539877 & -         & No Torsion \\
			\hline
			\hline
			\textbf{4}  & 0.000220 & 0.535345 & 0.000760  & \textbf{Torsion: (2, 6619)} \\
			\textbf{10} & 0.000199 & 0.618562 & 0.000823  & \textbf{Torsion: (2, 6683)} \\
			\textbf{9}  & 0.000454 & 0.444102 & 0.001923  & No Torsion \\
			\hline
		\end{tabular}
	}
	\caption{Three runs with lowest MSE, RTD and total loss, with header colors matching the respective loss colors in Figure~\ref{fig:torsionrtd}. Torsional runs are highlighted in bold.}
	\label{tab:rtdresults}
\end{table}

\noindent For the triple loop point cloud, also the RTD autoencoder could not reconstruct torsion. The results are displayed in Table~\ref{tab:mod33loop} and the corresponding reconstructed point clouds are shown in Figure~\ref{fig:bestrtdmod3}.

\begin{table}[H]
	\centering
	\resizebox{\columnwidth}{!}{%
		\begin{tabular}{|c|c|c|c|c|}
			\hline
			\textbf{Run} & \cellcolor{orange!40}\textbf{MSE Loss} & \cellcolor{cyan!30}\textbf{RTD Loss} & \cellcolor{red!40}\textbf{Total Loss} & \textbf{Torsion} \\
			\hline
			\textbf{28} & 0.002108 & -        & -         & No Torsion \\
			\textbf{11} & 0.002630 & -        & -         & No Torsion \\
			\textbf{12} & 0.002725 & -        & -         & No Torsion \\
			\hline
			\hline
			\textbf{10} & -        & 1.127720 & -         & No Torsion \\
			\textbf{7}  & -        & 1.202779 & -         & No Torsion \\
			\textbf{30} & -        & 1.212620 & -         & No Torsion \\
			\hline
			\hline
			\textbf{8}  & 0.003103 & 1.443048 & 0.004113  & No Torsion \\
			\textbf{7}  & 0.003749 & 1.202779 & 0.006140  & No Torsion \\
			\textbf{10} & 0.005802 & 1.127720 & 0.007577  & No Torsion \\
			\hline
		\end{tabular}
	}
	\caption{Three runs with the lowest MSE, RTD, and total loss values. All runs are non-torsional.}
	\label{tab:mod33loop}
\end{table}

\subsubsection{Reconstruction of High-Dimensional Point Clouds}
\label{sec:highdpc}

To investigate the reconstructability of torsion in higher-dimensional point clouds using autoencoders, we generate high-dimensional random point clouds and analyze them for torsion using the algorithm developed by \cite{obayashi2023field}. The point clouds identified as torsion-prone are then used as an input to train autoencoders, and we examine both the latent and output spaces for the presence of torsion. We employ hyperparameter optimization using \textit{Optuna} \cite{optuna}. The hyperparameters are deployed in Table~\ref{tab:combined-hyperparams-10d} for the $10$-dimensional input data and in Table~\ref{tab:combined-hyperparams-13d} for the $13$-dimensional input data.\\

\noindent In addition, for both the RTD and topological autoencoder, the hyperparameter ($\eta$ in Eq.~\eqref{eq:topoae} and $\chi$ in Eq.~\eqref{eq:rtdloss}) controlling the trade-off between reconstruction and geometric or topological loss was selected via random search on five subsamples, with candidate values drawn from a logarithmic scale $[10^{-6}, 10^3]$ for RTD and $[10^{-6}, 10^1]$ for the topological variant; the final hyperparameter was set as the average of the best-performing values across subsamples.
\newline Across all experiments, the latent space consistently showed no signs of torsion, indicating that torsion can reappear in the output even when it is absent from the latent space.

\paragraph{Vanilla Autoencoder}

\paragraph{$d=10$:}
Out of 1000 randomly generated point clouds in $\mathbb{R}^{10}$, 18 were identified as torsional. These were used as an input to a vanilla autoenocder with the hyperparameters deployed in Table~\ref{tab:combined-hyperparams-10d}.  The torsional outputs are shown in Figure~\ref{fig:highd2} and the resutls with regard to the loss are summarized in Table~\ref{tab:tors10d2}. Only one output was identified as torsional and this did not coincide with a minimum MSE loss. 

\begin{figure}[ht]
	\centering
	\includegraphics[width=\columnwidth]{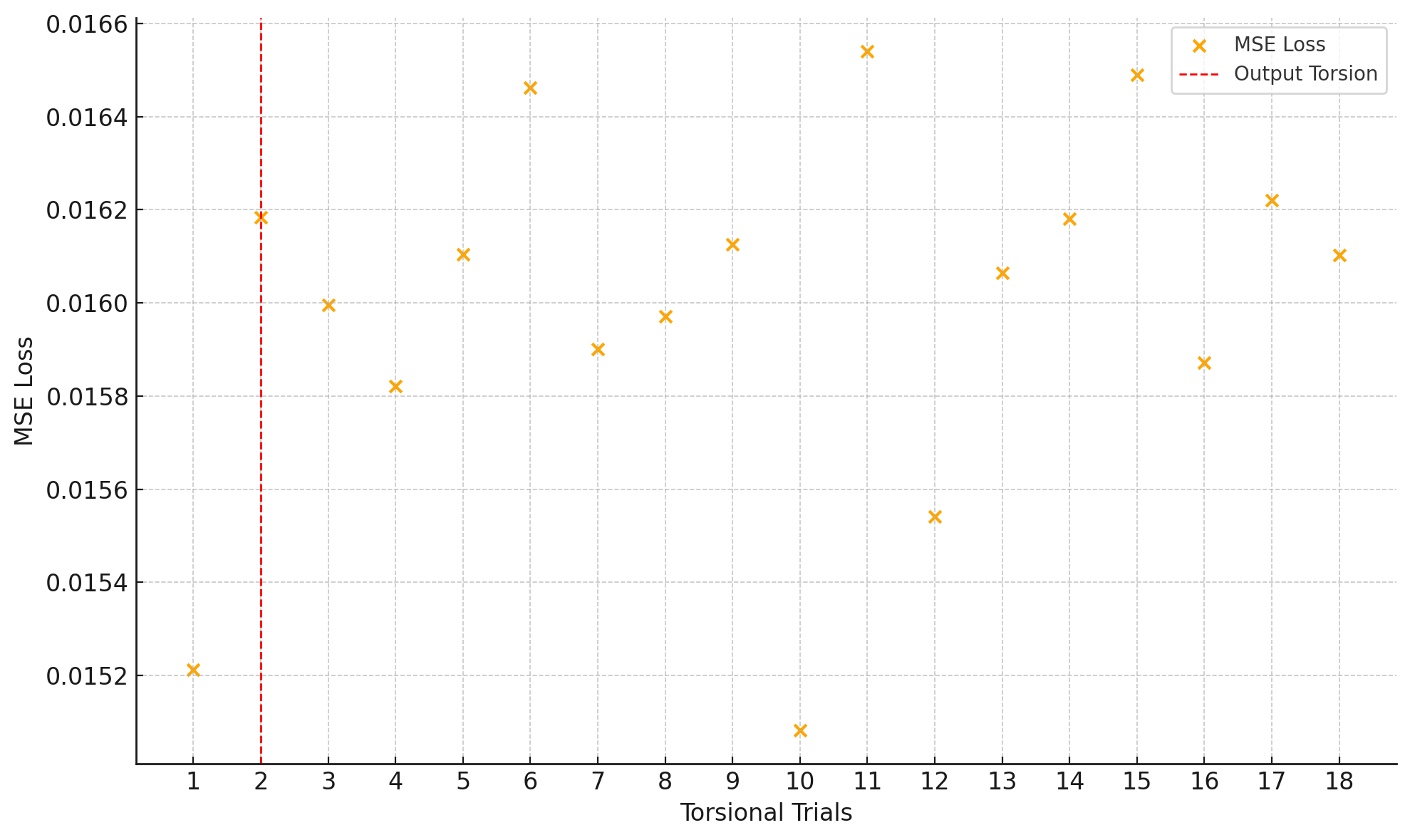}
	\caption{MSE loss for torsional point clouds, processed by a vanilla autoencoder. Torsional outputs are marked with vertical red lines.  }
	\label{fig:highd2}
\end{figure}

\begin{table}[ht]
	\centering
	\begin{tabular}{|c|c|c|}
		\hline
		\textbf{Trial} & \cellcolor{orange!40}\textbf{MSE Loss} & \textbf{Torsion} \\
		\hline
		\textbf{10} & 0.015083 & No Torsion \\
		\textbf{1} & 0.015213 & No Torsion \\
		\textbf{12} & 0.015541 & No Torsion \\
		\hline
	\end{tabular}
	\caption{Three trials with the lowest MSE loss and their torsional status for $10d$ torsional input data.}
	\label{tab:tors10d2}
\end{table}

\paragraph{$d=13$:}
Out of 1000 randomly generated point clouds in $\mathbb{R}^{13}$, 23 were identified as torsional. These were used as an input to a vanilla autoencoder with the hyperparameters deployed in Table~\ref{tab:combined-hyperparams-13d}, with latent dimension $8$. The torsional outputs are shown in Figure~\ref{fig:highd}. The three runs with the lowest MSE are depicted in Table~\ref{tab:tors13d}. Consistently with the 3D experiments, not all of them are torsional.
\begin{figure}[H]
	\centering
	\includegraphics[width=\columnwidth]{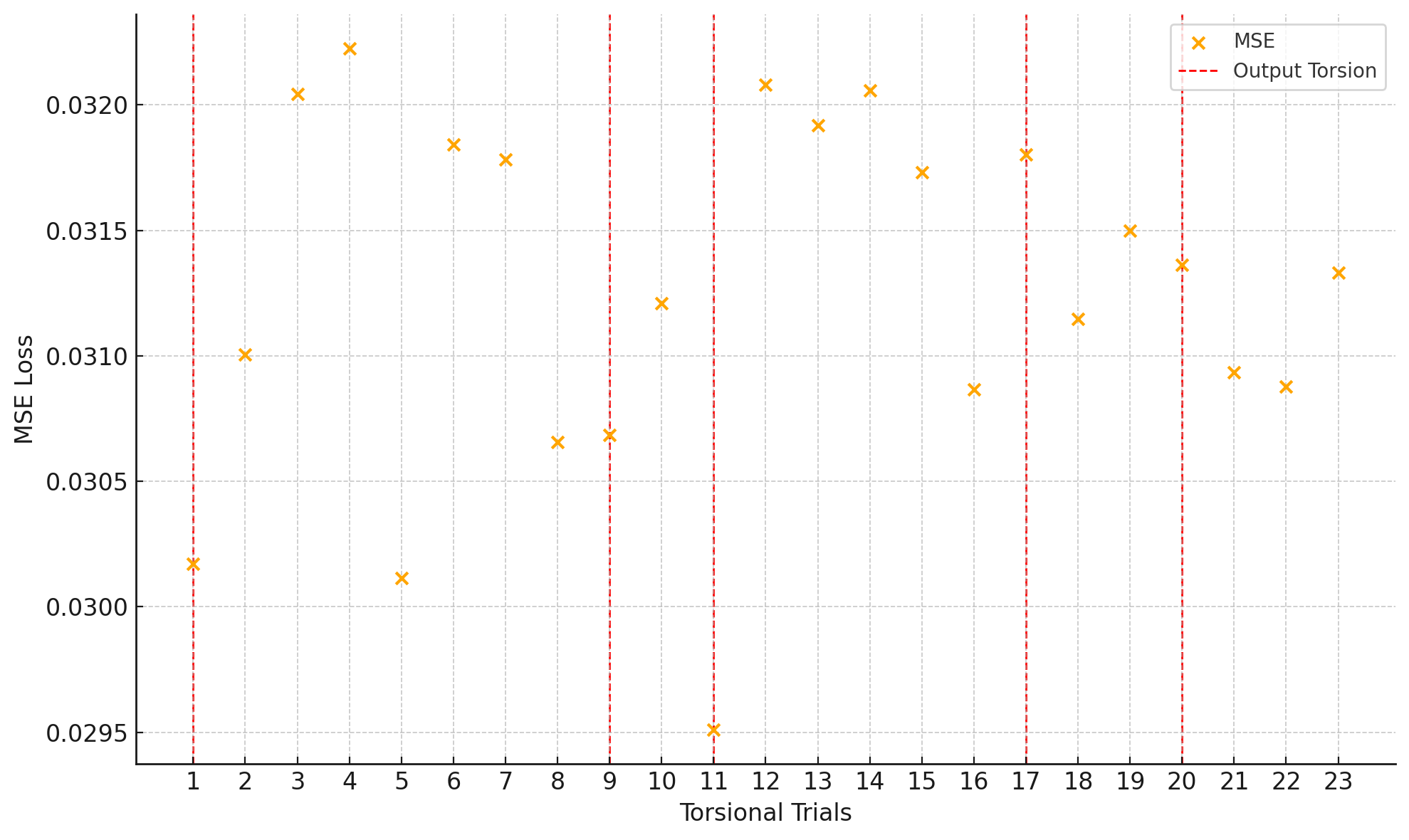}
	\caption{MSE loss for torsional point clouds, processed by a vanilla autoencoder. Torsional outputs are marked with vertical red lines.  }
	\label{fig:highd}
\end{figure}

\begin{table}[H]
	\centering
	\begin{tabular}{|c|c|c|}
		\hline
		\textbf{Trial} & \cellcolor{orange!40}\textbf{MSE Loss} & \textbf{Torsion} \\
		\hline
		\textbf{11} & 0.029509 & \textbf{Torsion: (2, 23146)} \\
		\textbf{5}  & 0.030115 & No Torsion \\
		\textbf{1}  & 0.030172 & \textbf{Torsion: (2, 18861) } \\
		\hline
	\end{tabular}
	\caption{Three trials with the lowest MSE loss and their torsional status for $13d$ torsional input data.}
	\label{tab:tors13d}
\end{table}

\paragraph{Topological Autoencoder}

For the $10$-dimensional torsional inputs, the topological autoencoder reconstructed torsion in 2 of the 18 samples. The results are summarized in Table~\ref{tab:topo10d}.
\noindent For the $13$-dimensional input data, $4$ out of 23 trials were identified as torsional. The results are summarized in Table~\ref{tab:topo13d}. 
For both high-dimensional point clouds, only one torsional output matched a minimum in the topological or total loss term, thus showing consistence with the low-dimensional data investigated in the previous section.

\paragraph{RTD Autoencoder}

For the $10$-dimensional input data, 2 out of 18 trials resulted in torsional outputs. The results are summarized in Table~\ref{tab:rtdloss10d}.
\noindent For the $13$-dimensional inputs, 3 out of 23 outputs exhibited torsion. The results are summarized in Table~\ref{tab:rtdloss13d}. Also for the RTD autoencoder over $13d$ data, the torsional runs did not coincide with those of minimum RTD loss.

\subsection{Summary and Discussion}
All our experiments have consistently shown across various dimensions that neither a low MSE loss, nor the topological loss, nor the RTD loss serves as an indicator of the autoencoder’s ability to reconstruct torsional structures. Except for the double loop example using the RTD loss term, the TDA loss terms did not lead to a significantly higher reconstruction of torsional outputs compared to training based on the MSE reconstruction error. Torsion occurs in homology dimensions $\geq 0$, and while the RTD loss is trained to preserve the persistent homology in homology dimension 1, the topological autoencoder is limited in its application to homology dimension 0.\\
Although both autoencoder architectures come with theoretical guarantees (\cite{clémot2025topologicalautoencodersfastaccurate}, \cite{barannikov2022representationtopologydivergencemethod}) for the case when their loss is zero, a generalizable trained autoencoder does have neither a MSE loss nor a topological loss equal to zero.\\
\noindent Our experiments thus provide an intuition that torsion is a topological feature for which not only the magnitude of the error matters, but also the regions of the data where the error arises. This helps explain why, in some cases, torsionally reconstructed outputs coincided with low losses, while in others they did not. \\
\noindent In this respect, torsion highlights that loss terms based on global persistent homology have their limitations in scenarios where it is crucial to reconstruct local regions of the data accurately, while other regions may exhibit higher loss without significantly affecting the reconstruction of the overall topological structure.\\
\noindent This is not possible with the architecture chosen by both the topological autoencoder and the RTD autoencoder in Eq.\eqref{eq:topoae} and Eq.\eqref{eq:rtdloss}, where a TDA component is added via a trainable hyperparameter, but is always evaluated globally on the data so as to minimize the overall loss. As our examples have shown, this often leads to the loss of torsional structures in particular.

\section{Conclusion}
The effect of transformations in deep neural networks on the topological structures of the input space is still little understood. Using simple mathematical examples and computational experiments, we have demonstrated that torsional structures, in particular, are difficult to reconstruct. Since in an autoencoder the latent space serves as a lower-dimensional representation of high-dimensional input data and is relevant for the classification of such data, we have mathematically investigated the consequences of representing torsion in the latent space. Computationally, we were able to show that torsion can be reconstructed even if it is lost in the lower-dimensional space, but this is not enforced by the MSE reconstruction loss or TDA loss terms.

\paragraph{Future Work}
From a mathematical perspective, there is still great potential to further investigate the questions raised in this work. In particular, it would be important to identify nonlinear autoencoder architectures that are unable to reconstruct torsion. From a computational perspective, a loss term that ensures torsion is preserved in the latent representation—provided it is present in the input data—would be relevant for data-driven classification of dynamical systems, especially given that computational limitations often prevent high-dimensional data from being tested for torsion. 

\section*{Acknowledgments}
We would like to thank Prof. Dr. Ippei Obayashi for providing the double and triple loop point cloud data and helpful suggestions for using the torsion checker algorithm in homcloud. Also we would like to thank Prof. Dr. Ulrich Thiel for some helpful comments for a newer version of this paper.

\printbibliography


\clearpage
\onecolumn
\appendix
\section{Activation Functions and Torsion}
Figure~\ref{fig:activfunc} displays common activation functions as presented in Table~\ref{tab:actvfunc} and their effect on the  torsional double loop structure. Note that these graphs are intended to provide only a first impression; in deep neural networks, many activation functions are composed sequentially with linear weight matrices.

\begin{table}[h!]
	\centering
	\caption{Common activation functions and their mathematical definitions with illustrative sketches.}
	\renewcommand{\arraystretch}{2}
	\begin{tabular}{@{}lll@{}}
		\toprule
		\textbf{Activation Function} & \textbf{Definition} & \textbf{Sketch} \\
		\midrule
		
		ReLU &
		\( f(x) = \max(0, x) \) &
		\begin{adjustbox}{valign=m}
			\begin{tikzpicture}[scale=0.5]
				\begin{axis}[
					axis lines=middle,
					xmin=-2, xmax=2,
					ymin=-1, ymax=2,
					xtick=\empty, ytick=\empty,
					width=3.5cm, height=3.5cm,
					domain=-2:2, samples=100
					]
					\addplot[blue, thick] {max(0,x)};
				\end{axis}
			\end{tikzpicture}
		\end{adjustbox} \\
		
		Leaky ReLU &
		\( f(x) = \begin{cases} x & \text{if } x \geq 0 \\ \alpha x & \text{else} \end{cases},\ \alpha = 0.01 \) &
		\begin{adjustbox}{valign=m}
			\begin{tikzpicture}[scale=0.5]
				\begin{axis}[
					axis lines=middle,
					xmin=-2, xmax=2,
					ymin=-1, ymax=2,
					xtick=\empty, ytick=\empty,
					width=3.5cm, height=3.5cm,
					domain=-2:2, samples=100
					]
					\addplot[blue, thick, domain=-2:0] {0.01*x};
					\addplot[blue, thick, domain=0:2] {x};
				\end{axis}
			\end{tikzpicture}
		\end{adjustbox} \\
		
		Sigmoid &
		\( f(x) = \frac{1}{1 + e^{-x}} \) &
		\begin{adjustbox}{valign=m}
			\begin{tikzpicture}[scale=0.5]
				\begin{axis}[
					axis lines=middle,
					xmin=-6, xmax=6,
					ymin=-0.1, ymax=1.1,
					xtick=\empty, ytick=\empty,
					width=3.5cm, height=3.5cm,
					domain=-6:6, samples=100
					]
					\addplot[blue, thick] {1/(1 + exp(-x))};
				\end{axis}
			\end{tikzpicture}
		\end{adjustbox} \\
		
		Tanh &
		\( f(x) = \tanh(x) = \frac{e^x - e^{-x}}{e^x + e^{-x}} \) &
		\begin{adjustbox}{valign=m}
			\begin{tikzpicture}[scale=0.5]
				\begin{axis}[
					axis lines=middle,
					xmin=-3, xmax=3,
					ymin=-1.1, ymax=1.1,
					xtick=\empty, ytick=\empty,
					width=3.5cm, height=3.5cm,
					domain=-3:3, samples=100
					]
					\addplot[blue, thick] {tanh(x)};
				\end{axis}
			\end{tikzpicture}
		\end{adjustbox} \\
		
		ELU &
		\( f(x) = \begin{cases} x & \text{if } x \geq 0 \\ \alpha (e^x - 1) & \text{else} \end{cases},\ \alpha = 1 \) &
		\begin{adjustbox}{valign=m}
			\begin{tikzpicture}[scale=0.5]
				\begin{axis}[
					axis lines=middle,
					xmin=-2, xmax=2,
					ymin=-1.1, ymax=2,
					xtick=\empty, ytick=\empty,
					width=3.5cm, height=3.5cm,
					domain=-2:2, samples=100
					]
					\addplot[blue, thick, domain=-2:0] {exp(x)-1};
					\addplot[blue, thick, domain=0:2] {x};
				\end{axis}
			\end{tikzpicture}
		\end{adjustbox} \\
		
		Softplus &
		\( f(x) = \ln(1 + e^x) \) &
		\begin{adjustbox}{valign=m}
			\begin{tikzpicture}[scale=0.5]
				\begin{axis}[
					axis lines=middle,
					xmin=-4, xmax=4,
					ymin=-0.5, ymax=5,
					xtick=\empty, ytick=\empty,
					width=3.5cm, height=3.5cm,
					domain=-4:4, samples=100
					]
					\addplot[blue, thick] {ln(1 + exp(x))};
				\end{axis}
			\end{tikzpicture}
		\end{adjustbox} \\
		
		\bottomrule
	\end{tabular}
	\label{tab:actvfunc}
\end{table}

\begin{figure}[h!]
	\centering
	\begin{subfigure}[b]{0.3\textwidth}
		\centering
		\includegraphics[width=\textwidth]{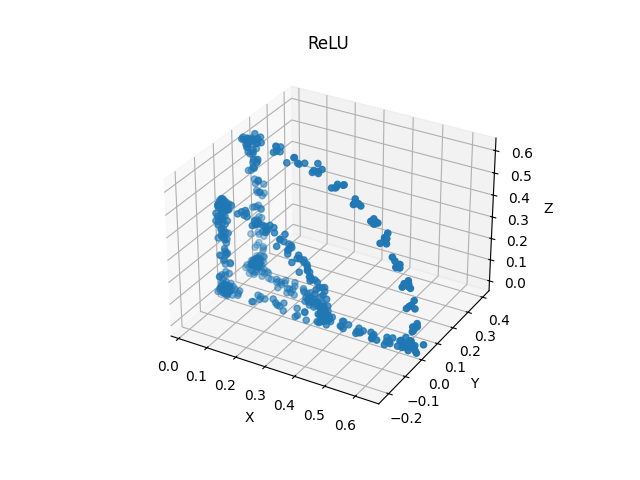}
		\caption{ReLU: No torsion.}
	\end{subfigure}
	\begin{subfigure}[b]{0.3\textwidth}
		\centering
		\includegraphics[width=\textwidth]{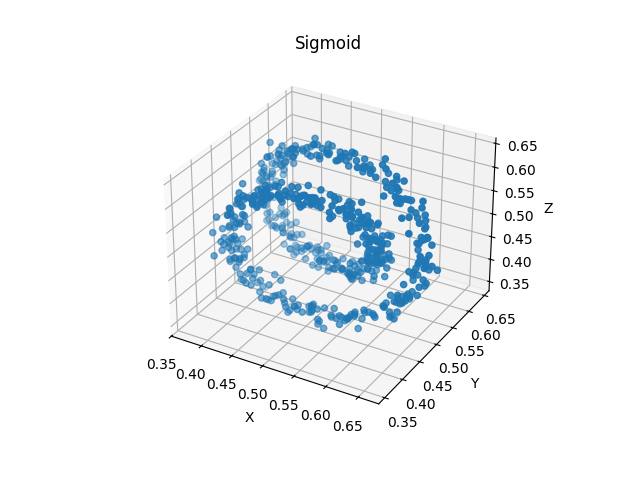}
		\caption{Sigmoid: Torsion.}
	\end{subfigure}
	\begin{subfigure}[b]{0.3\textwidth}
		\centering
		\includegraphics[width=\textwidth]{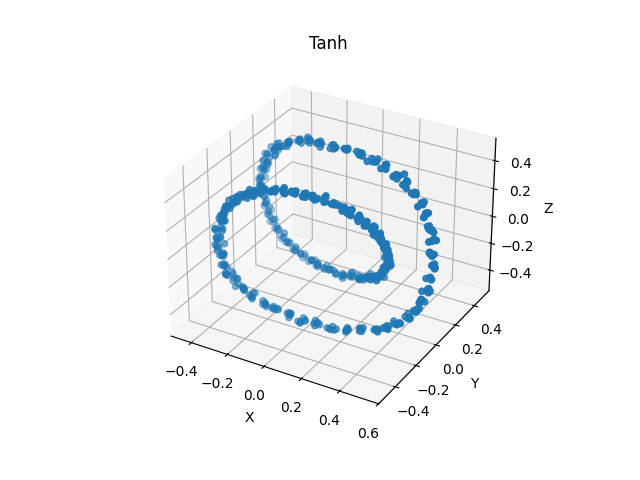}
		\caption{Tanh: Torsion.}
	\end{subfigure}
	
	\begin{subfigure}[b]{0.3\textwidth}
		\centering
		\includegraphics[width=\textwidth]{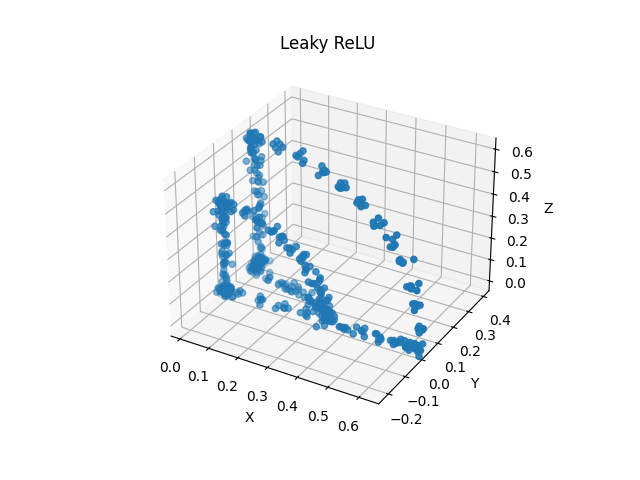}
		\caption{Leaky ReLU: No torsion.}
	\end{subfigure}
	\begin{subfigure}[b]{0.3\textwidth}
		\centering
		\includegraphics[width=\textwidth]{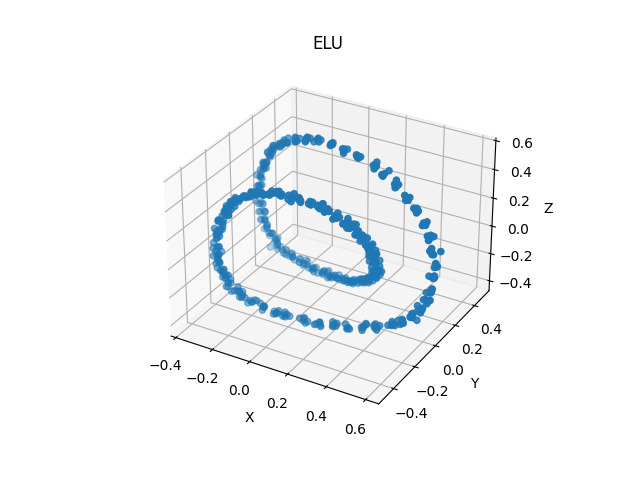}
		\caption{ELU: Torsion.}
	\end{subfigure}
	\begin{subfigure}[b]{0.3\textwidth}
		\centering
		\includegraphics[width=\textwidth]{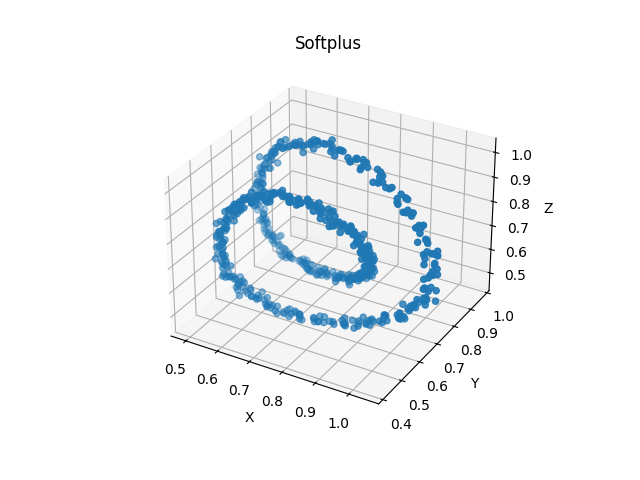}
		\caption{Softplus: Torsion}
	\end{subfigure}
	
	\caption{Double loop point cloud after application of the denoted activation functions. Small noise was added to avoid violation of the general position.}
	\label{fig:activfunc}
\end{figure}

\section{Autoencoders with TDA Losses}
\label{app:AE}
Loss terms are designed such that a (local) minimum loss ideally guarantees that certain characteristica of the data are learned. It appears desirable for the loss term of an autoencoder to be designed to maximize the similarity between the corresponding persistence diagrams of its layer states. In the following, we present two loss terms inspired by topological data analysis, which were designed with this purpose. The code of both algorithms is publicely available.

\subsection{Topological Autoencoder}
In a filtered simplicial complex the intervals in the barcode are each associated with a specific \enquote{creator simplex} and a \enquote{destroyer simplex} in the filtration. If the filtration is based on the metric distance between the data set elements, there is accordingly a \enquote{creator edge} and a \enquote{destroyer edge} corresponding to each interval, where for simplices with a dimension \( >1 \), this edge is selected from the simplex according to a weighting scheme.\newline
\noindent In \cite{moor2021topologicalautoencoders}, a loss term (in the following: \enquote{Topo loss}) is introduced with the aim to align these creator and destroyer edges between the input and the latent space. Imposing Assumption~\ref{ass:euclid}, whereby a simplicial complex is constructed on both the input data and the latent space according to the respective inherited Euclidean metric of each embedding space, the Topo loss term aligns the corresponding creator and destroyer simplices bidirectionally. Namely, 
\[
\mathcal{L}_{\mathrm{Topo}} := \mathcal{L}_{\mathcal{X} \to \mathcal{Z}} + \mathcal{L}_{\mathcal{Z} \to \mathcal{X}},
\]
where $\mathcal{X}$ denotes the input and $\mathcal{Z}$ denotes the latent space, with:
\begin{align}
	\mathcal{L}_{\mathcal{X} \to \mathcal{Z}} & := \frac{1}{2} \left\| A^X[\pi^X] - A^Z[\pi^X] \right\|^2, \label{eq:L_X_to_Z}\\
	\mathcal{L}_{\mathcal{Z} \to \mathcal{X}} & := \frac{1}{2} \left\| A^Z[\pi^Z] - A^X[\pi^Z] \right\|^2. \label{eq:L_Z_to_X}
\end{align}
In Eq.~\eqref{eq:L_X_to_Z}, $A^X[\pi^X]$ is a vector containing the creator and destroyer edges of the filtration on the input space and $A^Z[\pi^X]$ is their image under the encoder. Accordingly, in Eq.~\eqref{eq:L_Z_to_X}, $A^Z[\pi^Z]$ denotes the vector of creator and destroyer edges of the filtration constructed on the latent data and $A^X[\pi^Z]$ denotes their preimages under the encoder. The autoencoder is trained using a combination of the MSE loss and the Topo loss as:
\begin{equation}
	\label{eq:topoae}
\mathcal{L}= \mathcal{L}_{\mathrm{MSE}}+\eta \mathcal{L}_{\mathrm{Topo}}.	
\end{equation}

\subsection{RTD Autoencoder}
Representation Topology Divergence (RTD), as defined in \cite{barannikov2022representationtopologydivergencemethod} and adapted in \cite{trofimov2023learningtopologypreservingdatarepresentations} to define a loss term, assumes that the autoencoder induces a bijective mapping. The RTD loss term is designed in order to measure how far the induced map on the (persistent) homology groups deviates from an isomorphism.\\
\noindent For each of the two point clouds \( \mathcal{P} \) and \( \tilde{\mathcal{P}} \) (considered here as the input and output of an autoencoder), we construct a Vietoris-Rips complex represented as a weighted graph, where the weights correspond to the Euclidean distances between points. Specifically, let \( G^{w \leq \alpha} \) and \( G^{\tilde{w} \leq \alpha} \) denote the weighted graphs for \( \mathcal{P} \) and \( \tilde{\mathcal{P}} \), respectively, and let \( R_{\alpha}(G^{w}) \) and \( R_{\alpha}(G^{\tilde{w}}) \) represent their associated Vietoris-Rips complexes. Let \( f \) be a bijective map between the points in \( \mathcal{P} \) and \( \tilde{\mathcal{P}} \).
\newline
\noindent Define \( G^{\min(w, \tilde{w}) \leq \alpha} \) as the weighted graph formed by the union of the edges in \( G^{w \leq \alpha} \) and \( G^{\tilde{w} \leq \alpha} \). Specifically, \( G^{\min(w, \tilde{w}) \leq \alpha} \) includes an edge between two points \( p, p' \in \mathcal{P} \) if either the distance between \( p \) and \( p' \) or the distance between their images under \( f \), $f(p)$ and $f(p')$, is \( \leq \alpha \).\newline
\noindent Let 
\begin{equation}
	i: R_{\alpha}(G^{w}) \hookrightarrow R_{\alpha}(G^{\min(w, \tilde{w})})
\end{equation}
denote the inclusion map from the Vietoris-Rips complex associated with \( G^{w} \) to the Vietoris-Rips complex associated with \( G^{\min(w, \tilde{w})} \). In \cite{barannikov2022representationtopologydivergencemethod}, an auxiliary complex \( R_{\alpha}(\tilde{G}^{(w, \tilde{w})}) \) is constructed, which is homotopy equivalent to the cone of \( i \). The analogous construction is carried out for the inclusion \( \tilde{i} : R_{\alpha}(G^{\tilde{w}}) \hookrightarrow R_{\alpha}(G^{\min(w, \tilde{w})}) \), yielding an auxiliary complex \( R_{\alpha}(\tilde{G}^{(\tilde{w}, w)}) \) homotopy equivalent to the cone of \( \tilde{i} \).\newline
\noindent The representation topology divergence $\mathrm{RTD}_p(P,\tilde{P})$ (RTD) in homology dimension ${p\geq 1}$ associated with $R_{\alpha}(\tilde{G}^{(w, \tilde{w})})$ is defined as the sum of the interval's length in the $p$-dimensional barcode of $R_{\alpha}(\tilde{G}^{(w, \tilde{w})})$ and likewise for $R_{\alpha}(\tilde{G}^{(\tilde{w}, w)})$. For the inclusions $i$ and $\tilde{i}$ to induce isomorphisms on the homology groups, the homology of $R_{\alpha}(\tilde{G}^{(w, \tilde{w})})$ and likewise for $R_{\alpha}(\tilde{G}^{(\tilde{w}, w)})$ should be trivial and thus the RTD should be 0.\newline
\noindent Based on this, \cite{trofimov2023learningtopologypreservingdatarepresentations} define a loss term (\enquote{RTD loss}) as follows:
\begin{equation}
	\mathcal{L}_{\mathrm{RTD}}:=\sum_{i \geq 1} \left( \mathrm{RTD}_p (P, \tilde{P}) + \mathrm{RTD}_p(\tilde{P}, P) \right),
\end{equation}
for a homology dimension $p \geq 1$. when this loss term is used for the optimisation of the training of an autoencoder architecture, the two point clouds under consideration $P$ and $\tilde{P}$ would be identified with the input and output data set respectively.\newline
\noindent In practice, \cite{trofimov2023learningtopologypreservingdatarepresentations} used 
\begin{equation}
	\mathrm{RTD}_1(P,\tilde{P}) + \mathrm{RTD}_1(\tilde{P},P).
\end{equation}
The autoencoder was trained for $E_1$ epochs using only the MSE loss and $E_2$ epochs using a combined loss given as:
\begin{equation}
	\label{eq:rtdloss}
	\mathcal{L}=\mathcal{L}_{\mathrm{MSE}}+\chi \mathcal{L}_{\mathrm{RTD}},
\end{equation}
where $\chi$ denotes a hyperparameter regularizing the contribution of RTD to the total loss.

\newpage
\section{Double and Triple Loop Reconstructions}
The double and triple loop point clouds displayed in Figure~\ref{fig:original-loops} below were used as a proof of concept, as they represent datasets with a well-analyzable torsion structure and, due to their low dimensionality, are also easy to visualize. The following figures illustrate that torsion may remain unreconstructable even when the network performs well on the global dataset. Since the torsional and non-torsional datasets also appear very similar visually, this highlights that torsion is a topological property that cannot be learned or captured using the given global training methods.
\begin{table}[h!]
	\centering
	\begin{tabular}{cc}
		\includegraphics[width=0.25\textwidth]{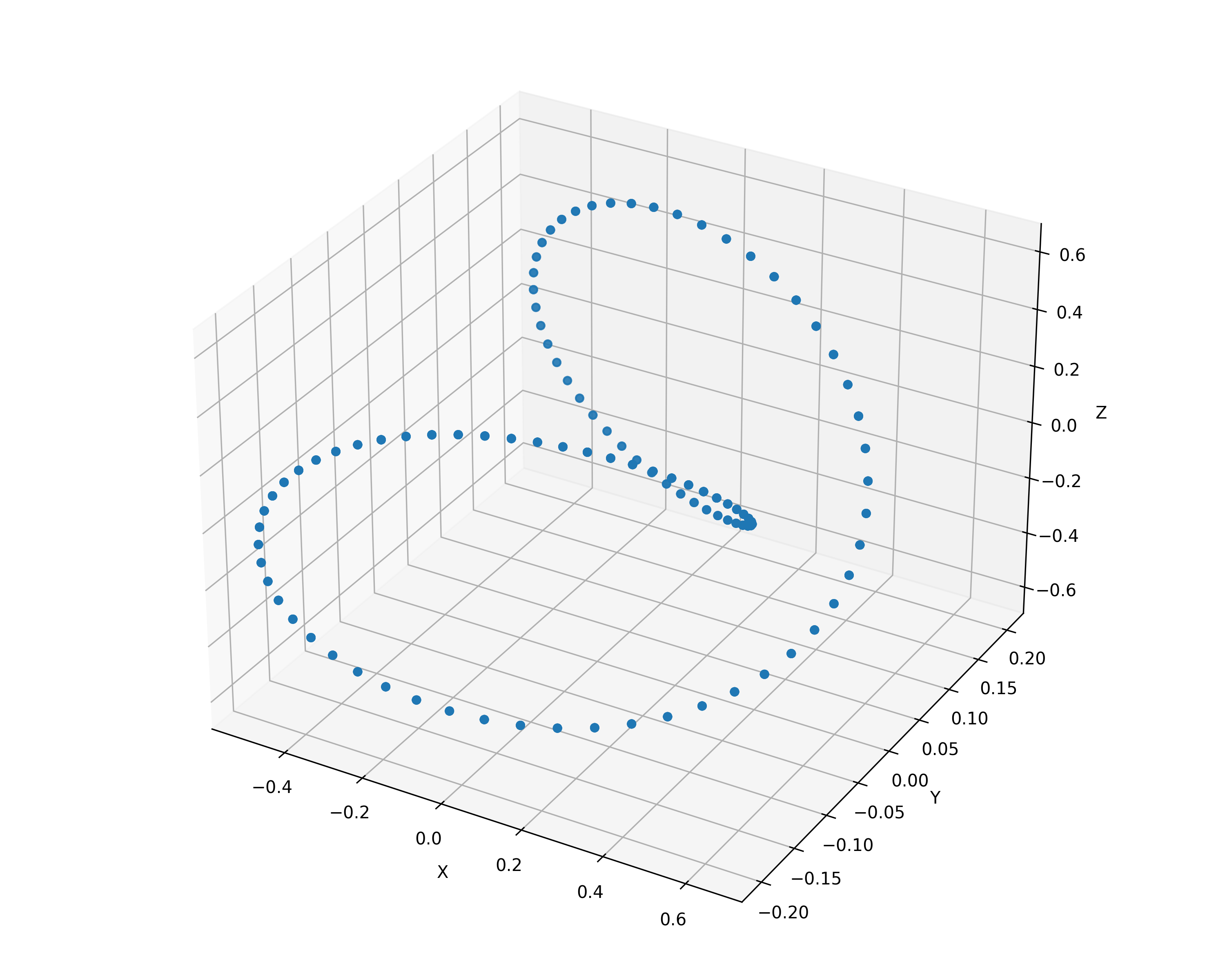} &
		\includegraphics[width=0.25\textwidth]{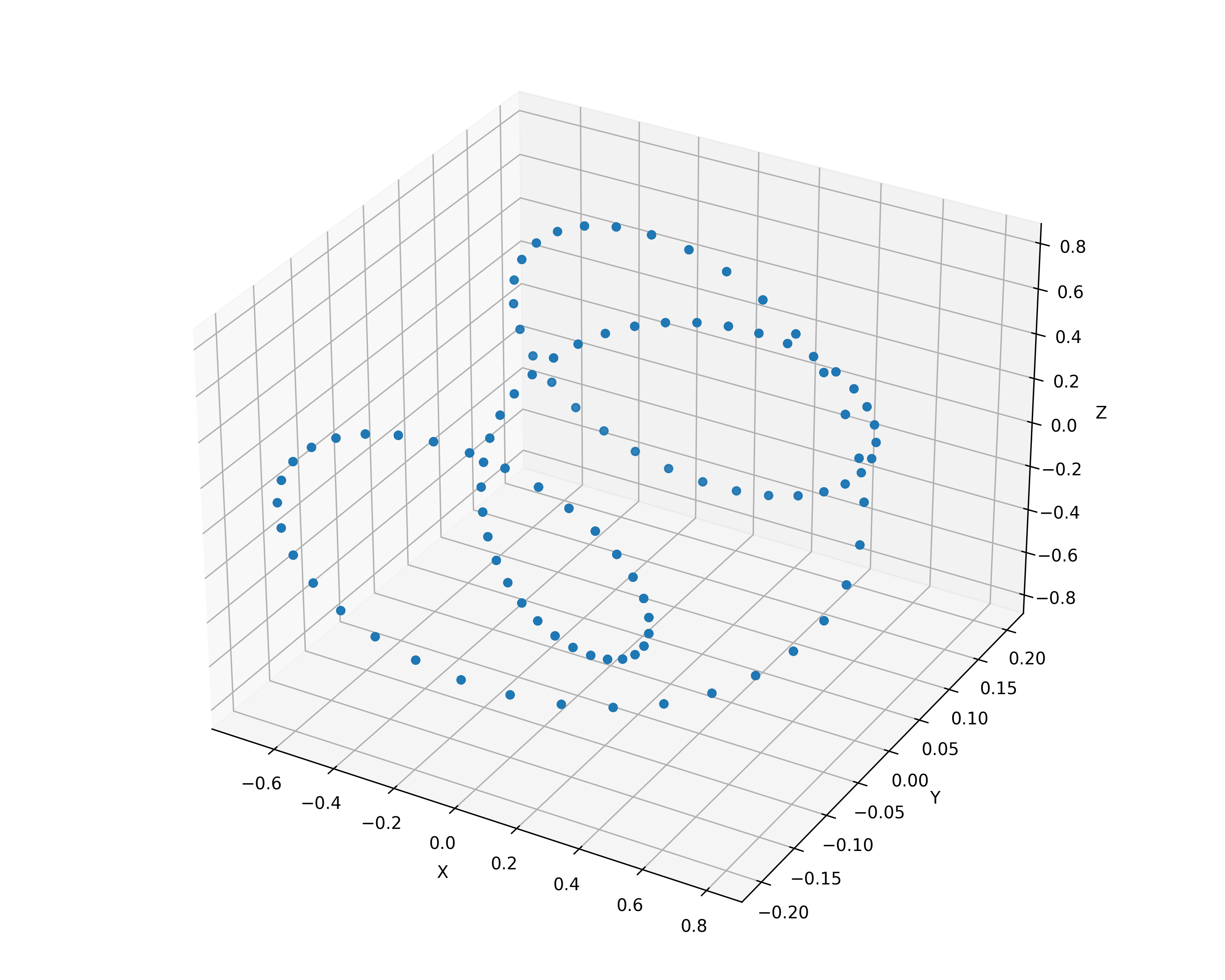} \\
		\toprule
		\textbf{Double Loop} & \textbf{Triple Loop} \\
	\end{tabular}
	\caption{Double and triple loop before transformation.}
	\label{fig:original-loops}
\end{table}

\subsection{Vanilla Autoencoder}

In Figure~\ref{fig:vanillaloop}, for the double loop, the three runs with the lowest MSE loss as well as the three torsional outputs with the lowest MSE loss are shown. These sets do not coincide.

\begin{figure}[H]
	\centering
	\renewcommand{\arraystretch}{1.2}
	\setlength{\tabcolsep}{10pt}
	\begin{tabular}{ccc}
		\includegraphics[width=0.25\textwidth]{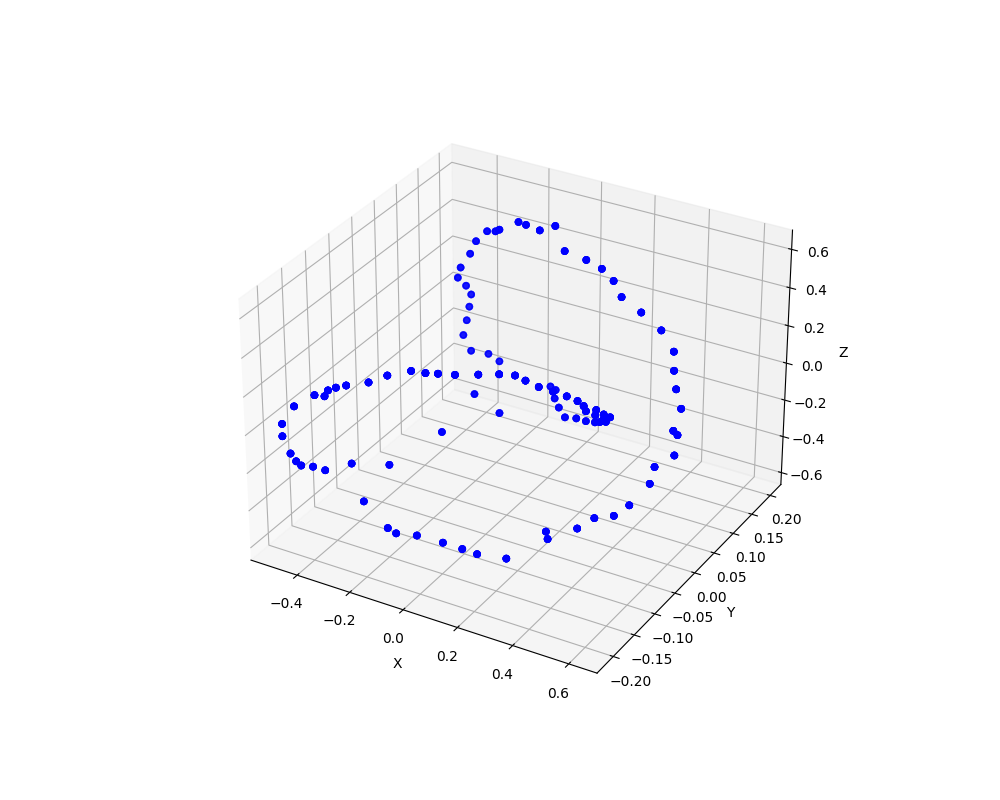} &
		\includegraphics[width=0.25\textwidth]{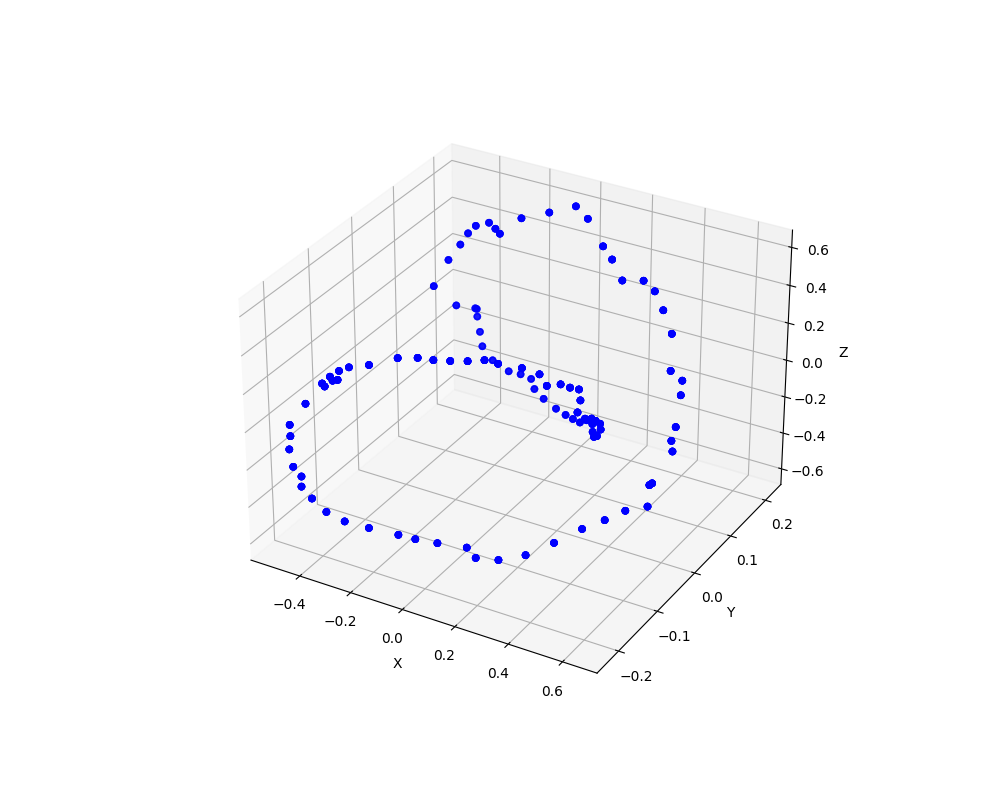} &
		\includegraphics[width=0.25\textwidth]{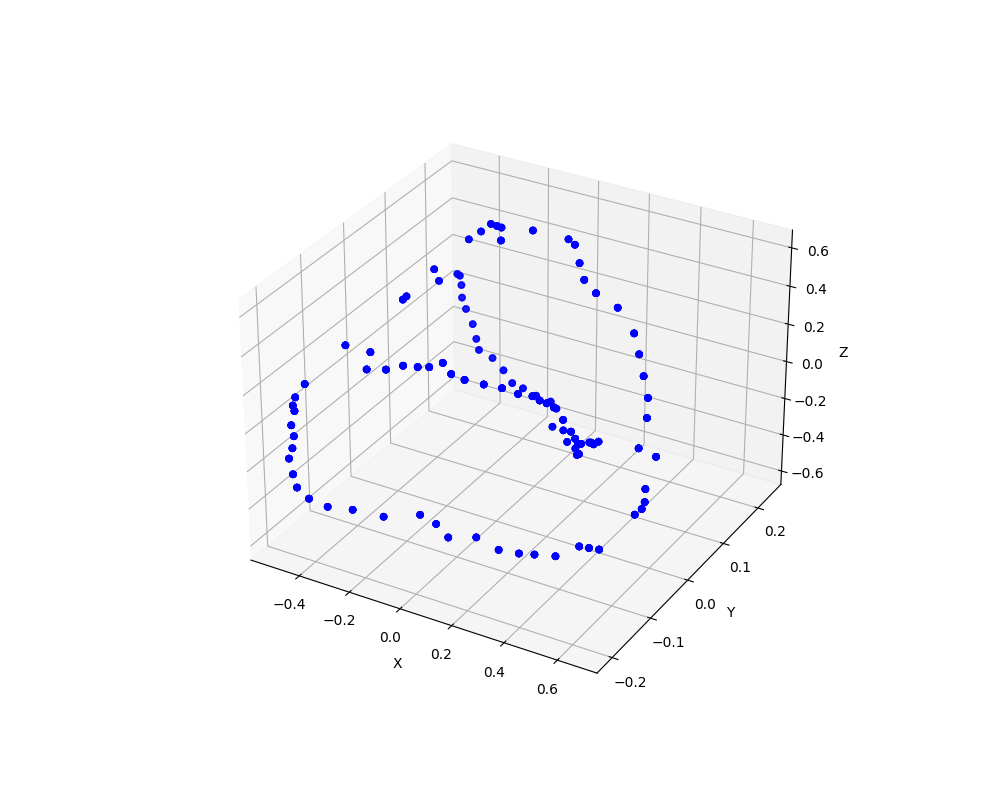} \\
		\midrule
		Run 17 & Run 39 & Run 1 \\
		\midrule
		MSE loss 0.0019 & MSE loss 0.0020 & MSE loss 0.0023 \\
		\addlinespace[1em]
		\includegraphics[width=0.25\textwidth]{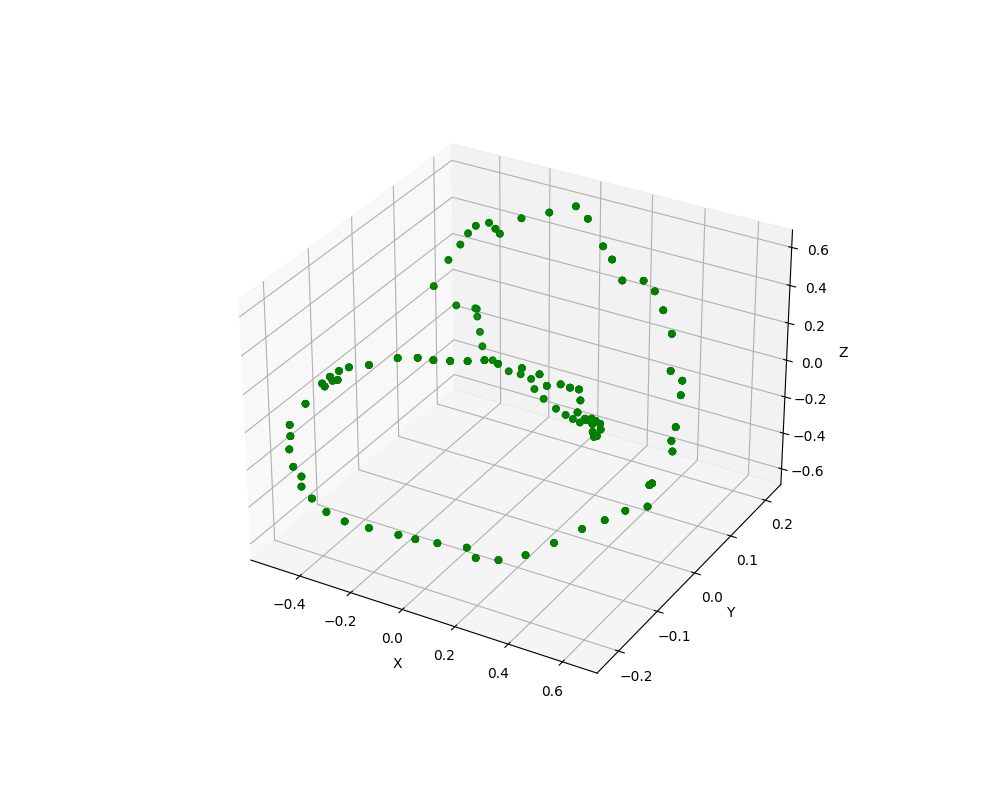} &
		\includegraphics[width=0.25\textwidth]{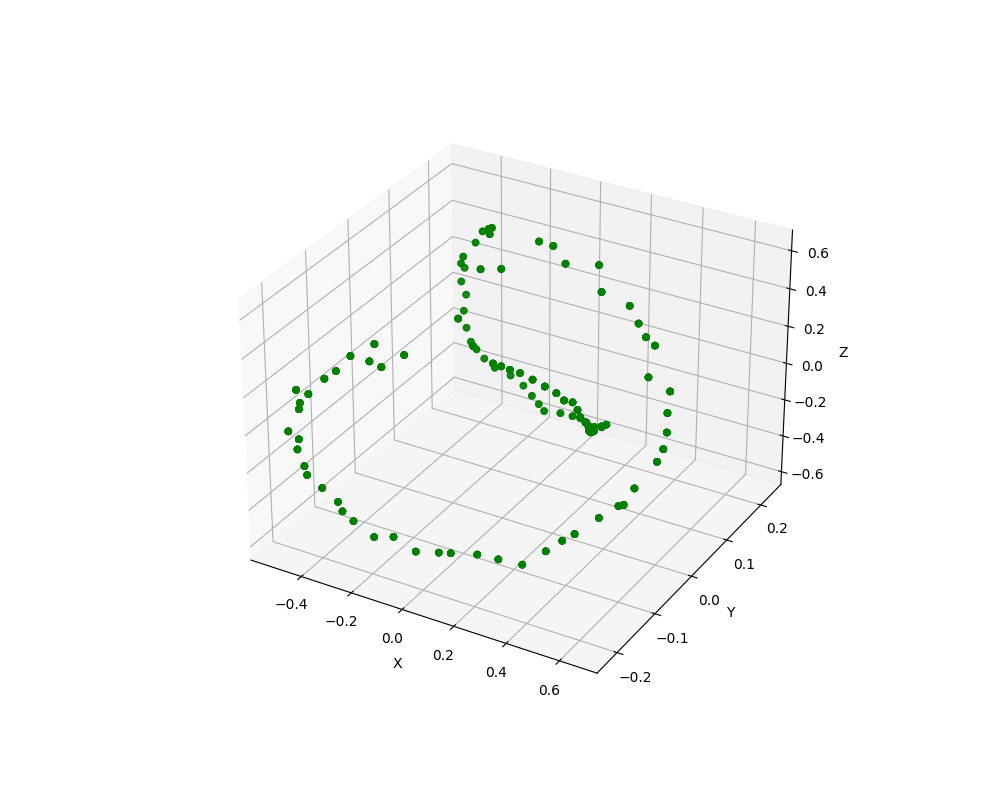} &
		\includegraphics[width=0.25\textwidth]{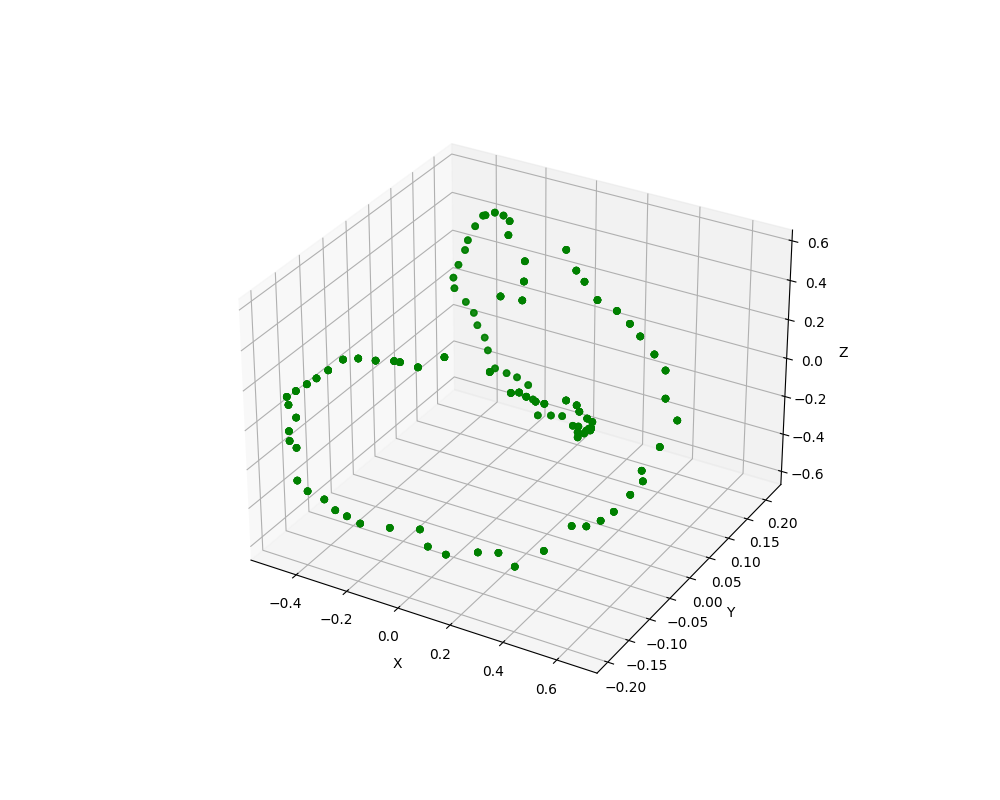} \\
		\midrule
		Run 39 & Run 12 & Run 21 \\
		\midrule
		MSE loss 0.0020 & MSE loss 0.0032 & MSE loss 0.0038 \\
	\end{tabular}
	\caption{Top row: Runs with lowest MSE loss for the vanilla autoencoder on the double loop. Bottom row: Torsional outputs with lowest MSE loss.}
	\label{fig:vanillaloop}
\end{figure}

\noindent In Figure~\ref{fig:mod3fig}, for the triple loop, the three runs with the lowest MSE loss are depicted. None of them exhibits torsion. The visual comparison with the original triple loop also shows that the overfitted vanilla autoencoder was unable to reconstruct the complex triple winding structure.

\begin{figure}[H]
	\centering
	\renewcommand{\arraystretch}{1.2}
	\setlength{\tabcolsep}{10pt}
	\begin{tabular}{ccc}
		\includegraphics[width=0.25\textwidth]{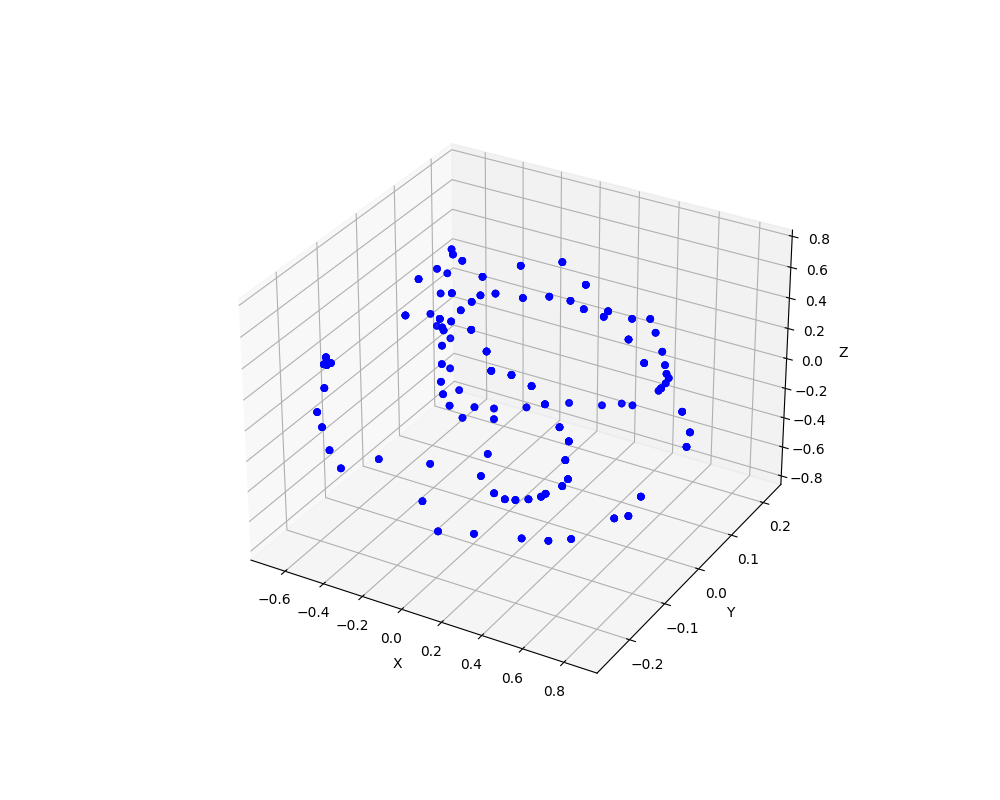} &
		\includegraphics[width=0.25\textwidth]{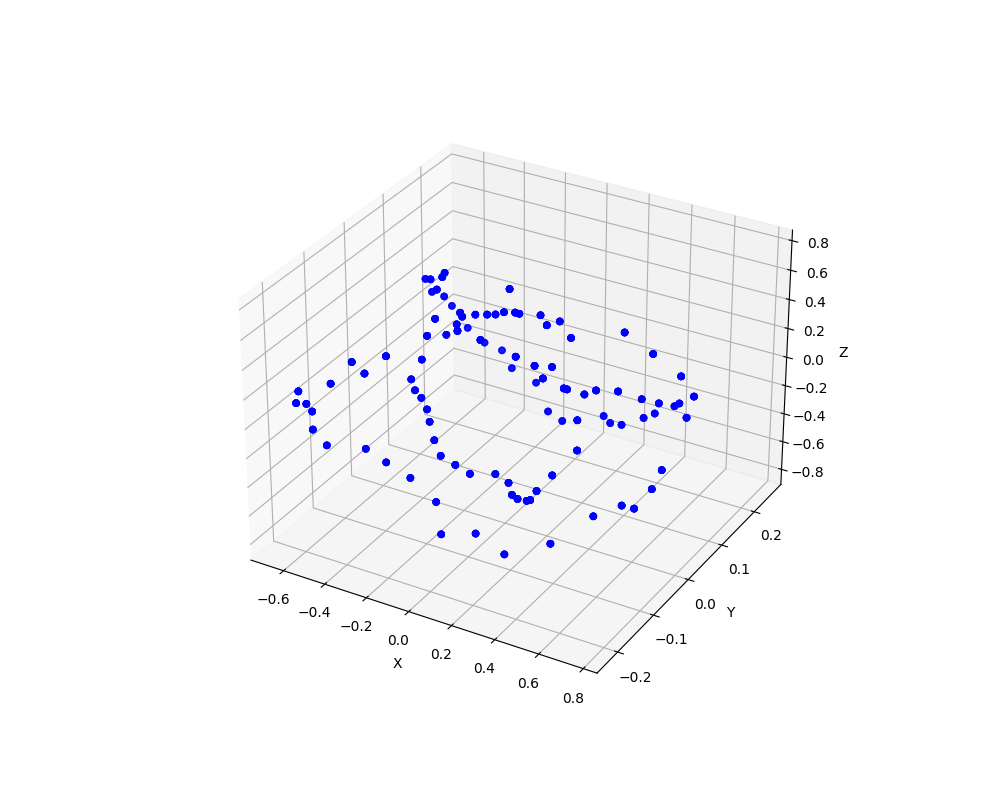} &
		\includegraphics[width=0.25\textwidth]{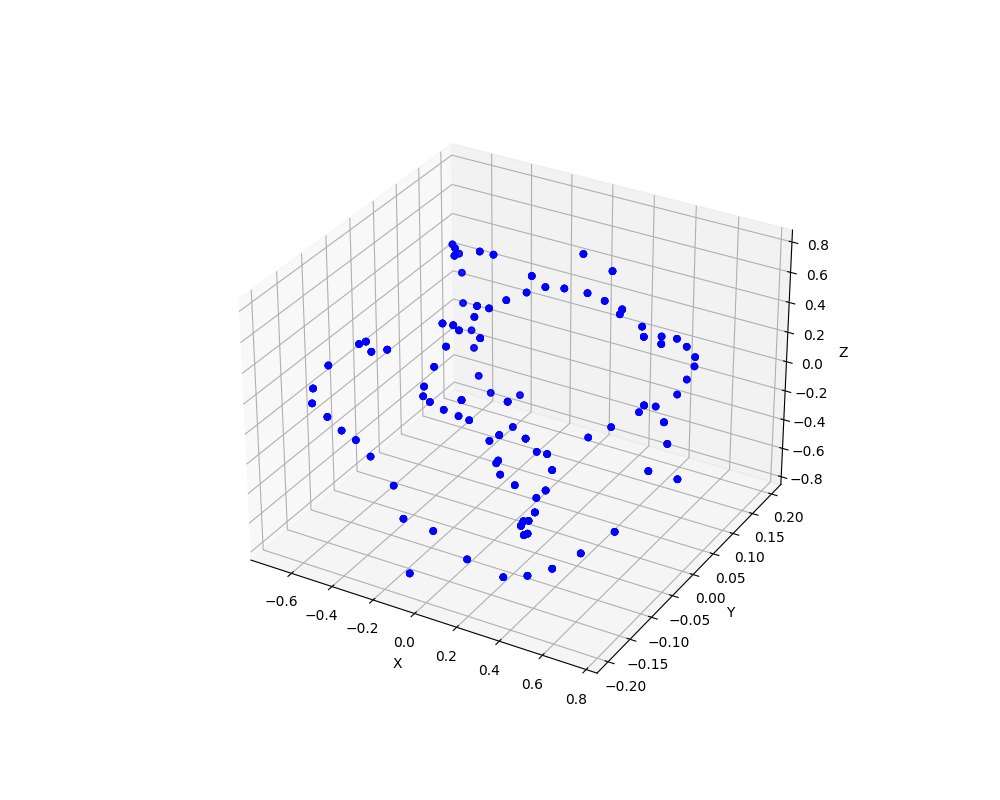} \\
		\midrule
		Run 2 & Run 19 & Run 36 \\
		\midrule
		MSE loss 0.0038 & MSE loss 0.0055 & MSE loss 0.0037 \\
	\end{tabular}
	\caption{Runs with the lowest MSE loss for the vanilla autoencoder for the triple loop.}
	\label{fig:mod3fig}
\end{figure}

\newpage

\subsection{Topological Autoencoder}

In Figure~\ref{fig:topoae_visuals}, for the double loop, the three runs with the lowest MSE loss as well as the three runs with the lowest topological and total loss are depicted. These do not coincide with the three torsional runs.

\begin{figure}[H]
	\centering
	\renewcommand{\arraystretch}{1.2}
	\setlength{\tabcolsep}{10pt}
	\begin{tabular}{ccc}
		\includegraphics[width=0.25\textwidth]{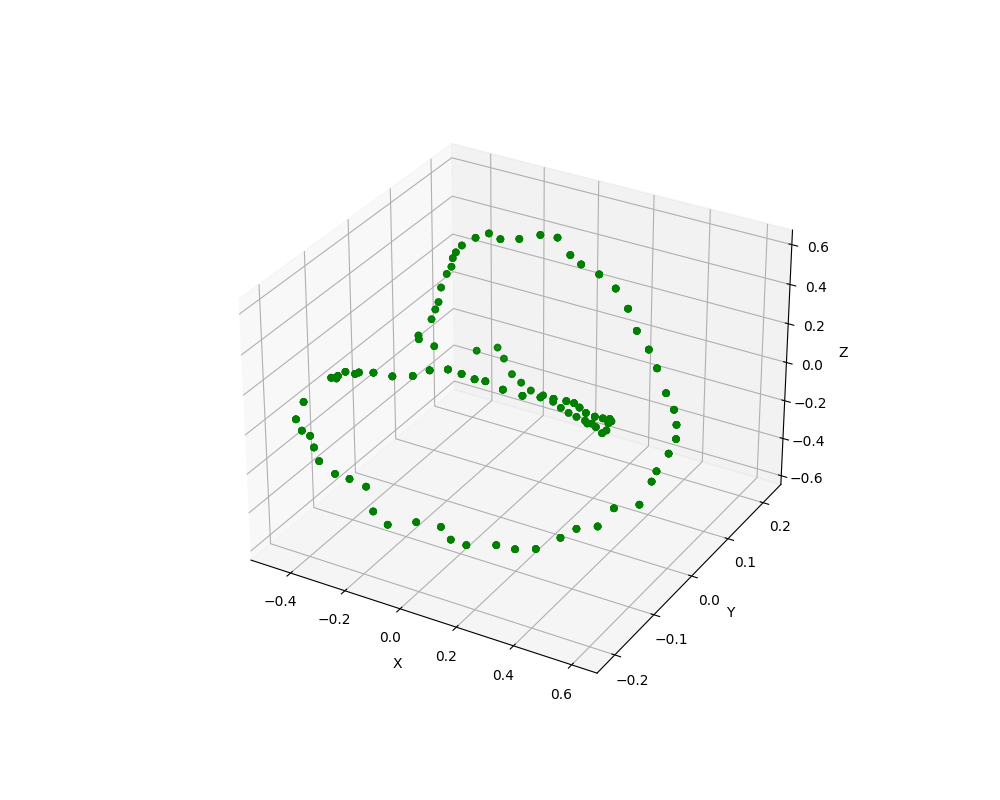} &
		\includegraphics[width=0.25\textwidth]{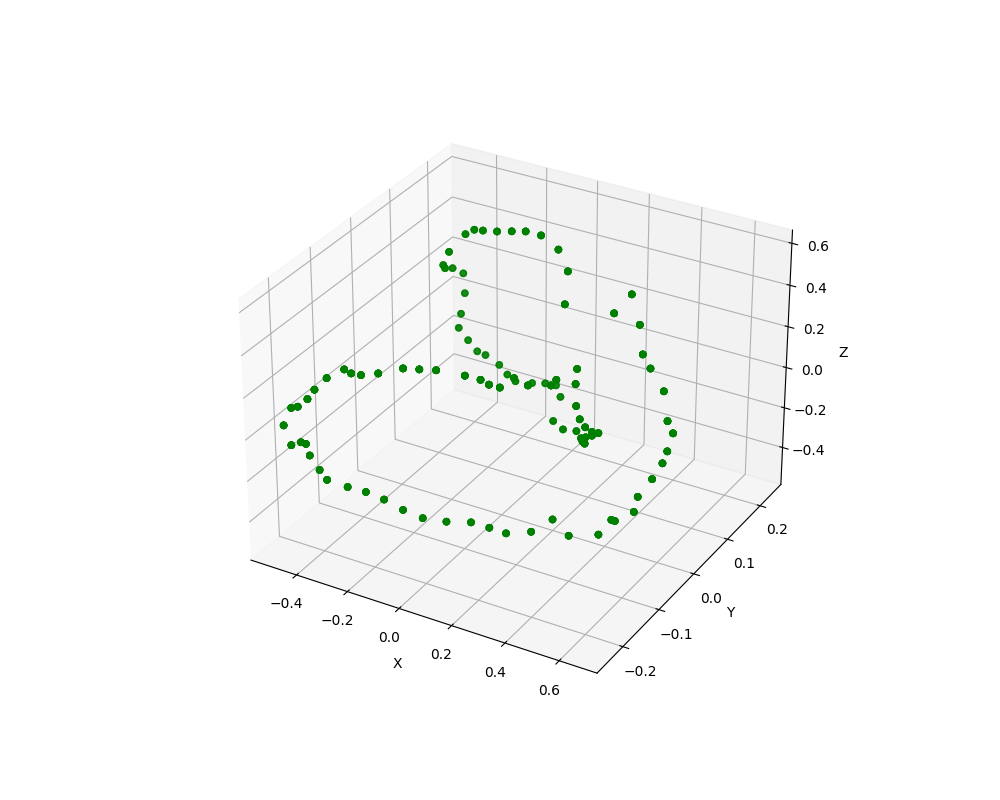} &
		\includegraphics[width=0.25\textwidth]{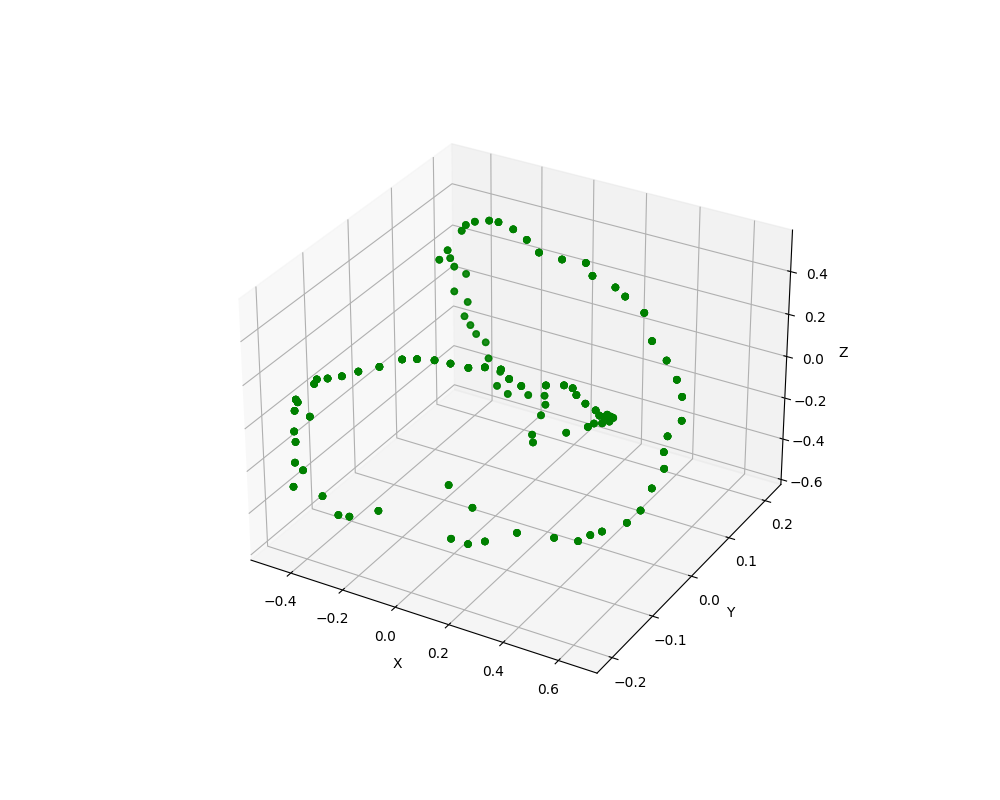} \\
		\midrule
		Run 2 & Run 4 & Run 6 \\
		\midrule
		MSE loss 0.0016 & MSE loss 0.0017 & MSE loss 0.0013 \\
		Topo loss 0.2945 & Topo loss 0.2127 & Topo loss 0.1057 \\
		Total loss 0.0566 & Total loss 0.0316 & Total loss 0.0162 \\
		\addlinespace[1em]
		\includegraphics[width=0.25\textwidth]{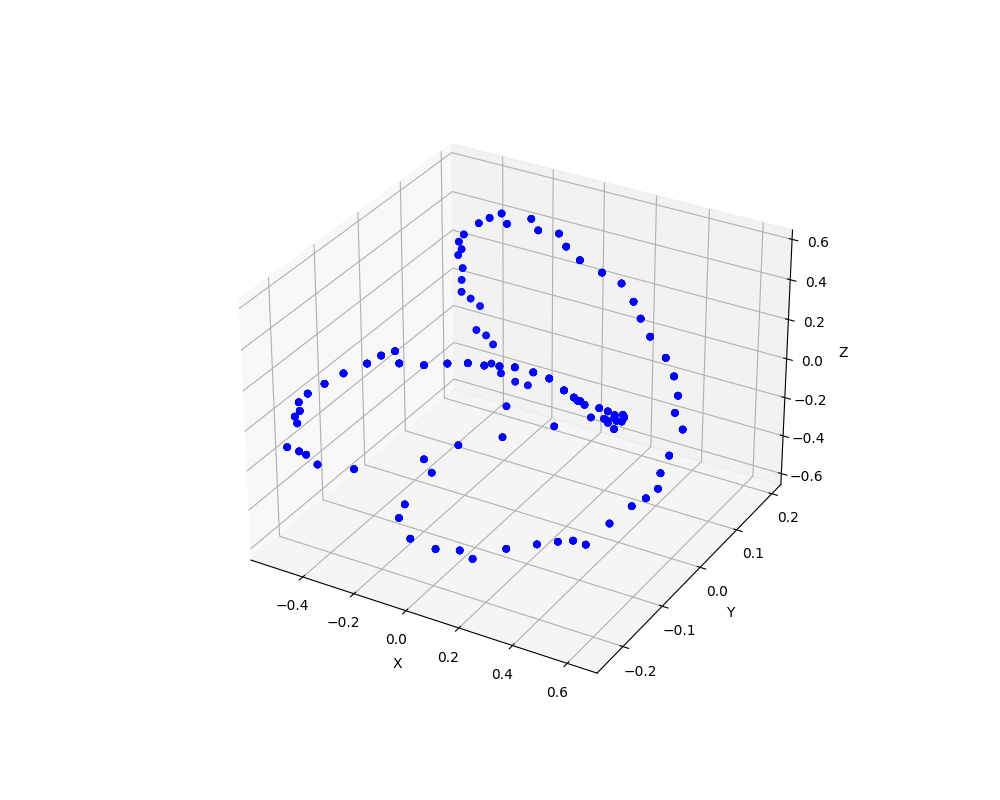} &
		\includegraphics[width=0.25\textwidth]{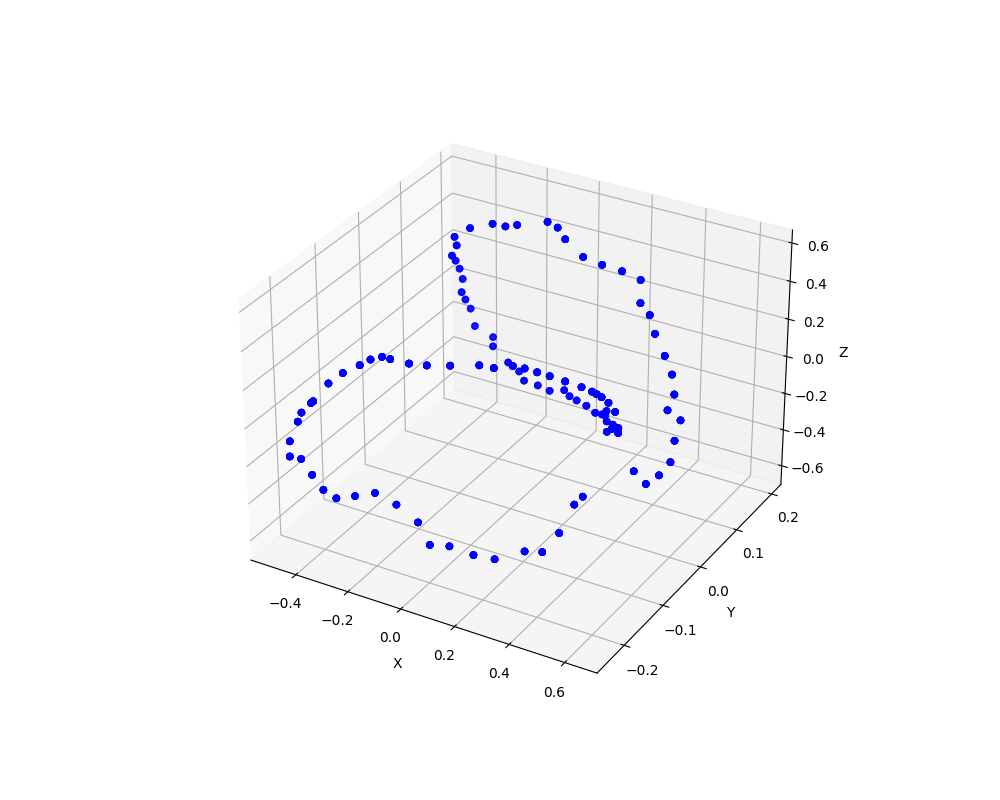} &
		\includegraphics[width=0.25\textwidth]{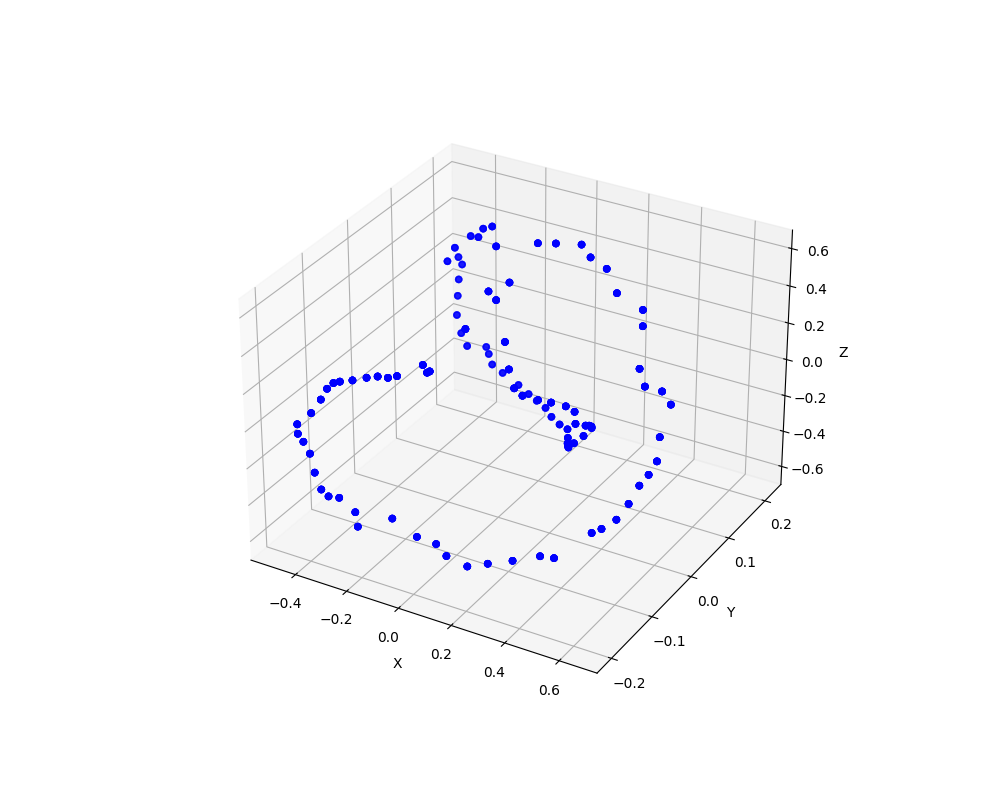} \\
		\midrule
		Run 14 & Run 27 & Run 37 \\
		\midrule
		MSE loss 0.0012 & MSE loss 0.0010 & MSE loss 0.0011 \\
		Topo loss 0.2685 & Topo loss 0.1568 & Topo loss 0.3063 \\
		Total loss 0.0513 & Total loss 0.0167 & Total loss 0.0509 \\
		\addlinespace[1em]
		\includegraphics[width=0.25\textwidth]{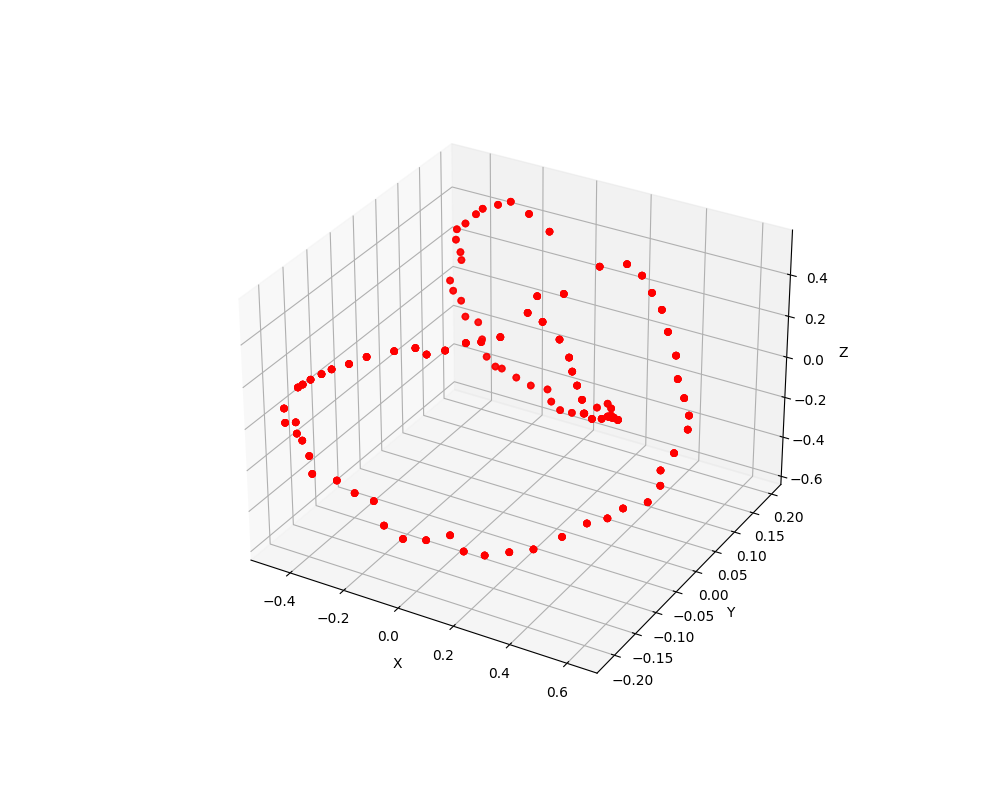} &
		\includegraphics[width=0.25\textwidth]{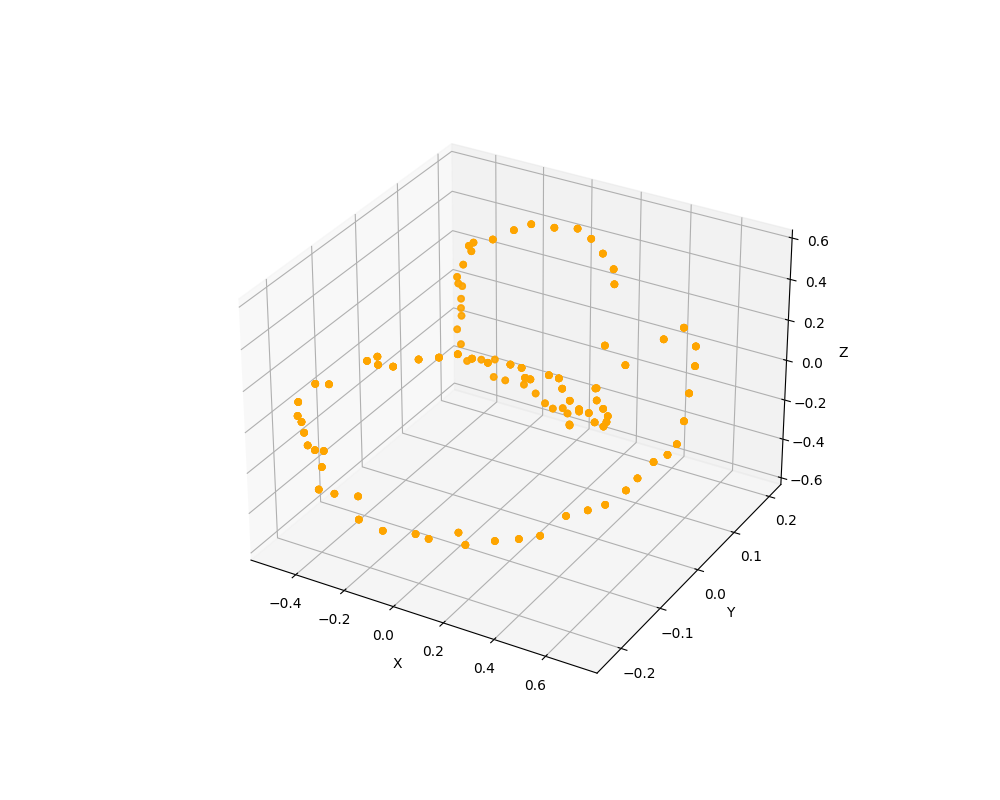} &
		\\[-1em]
		\midrule
		Run 22 & Run 15 & \\
		\midrule
		MSE loss 0.0012 & MSE loss 0.0029 & \\
		Topo loss 0.2512 & Topo loss 0.2002 & \\
		Total loss 0.0263 & Total loss 0.0310 & \\
	\end{tabular}
	\caption{Top row: Torsional outputs for the topological autoencoder. Middle row: Outputs with lowest MSE loss. Bottom row: Outputs with lowest total and topological loss.}
	\label{fig:topoae_visuals}
\end{figure}

\noindent Also the topological autoencoder could not reconstruct the triple loop structure, as shown in Figure~\ref{fig:topomod3_visuals}.

\begin{figure}[H]
	\centering
	\renewcommand{\arraystretch}{1.2}
	\setlength{\tabcolsep}{10pt}
	\begin{tabular}{ccc}
		\includegraphics[width=0.25\textwidth]{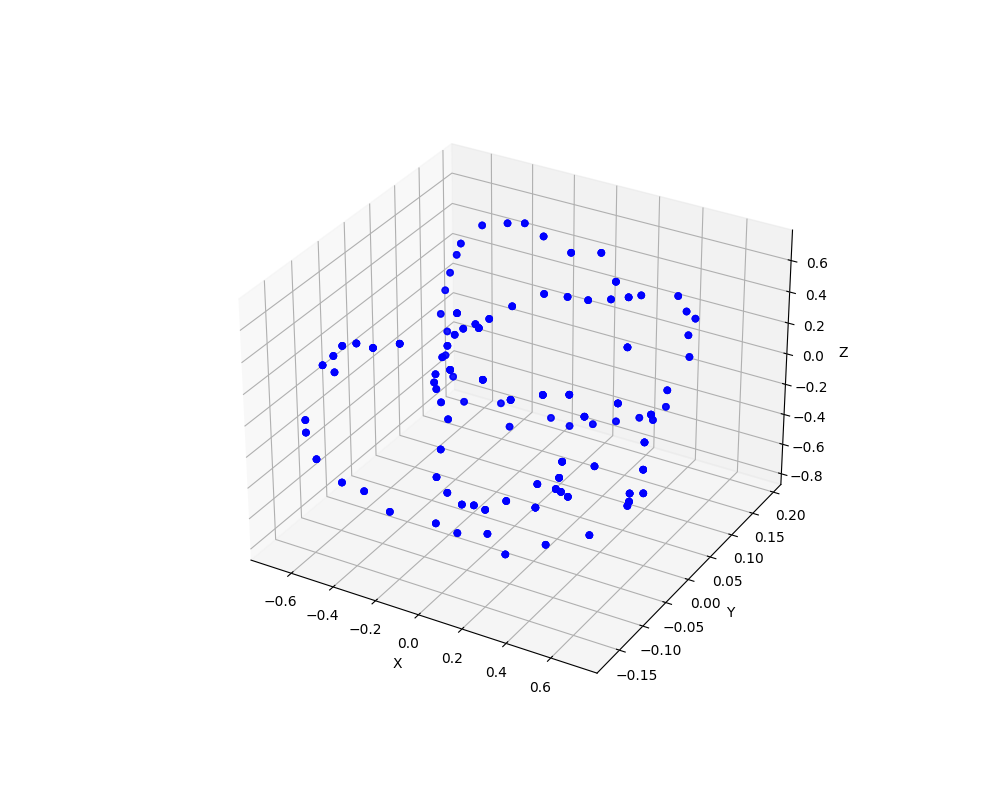} &
		\includegraphics[width=0.25\textwidth]{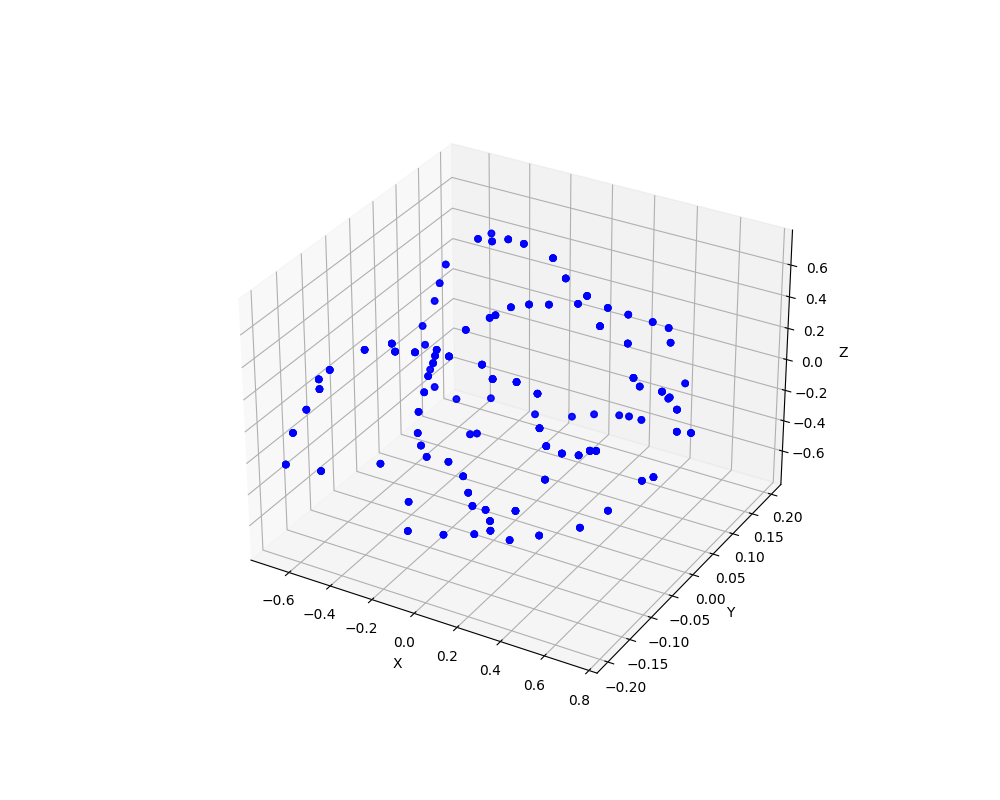} &
		\includegraphics[width=0.25\textwidth]{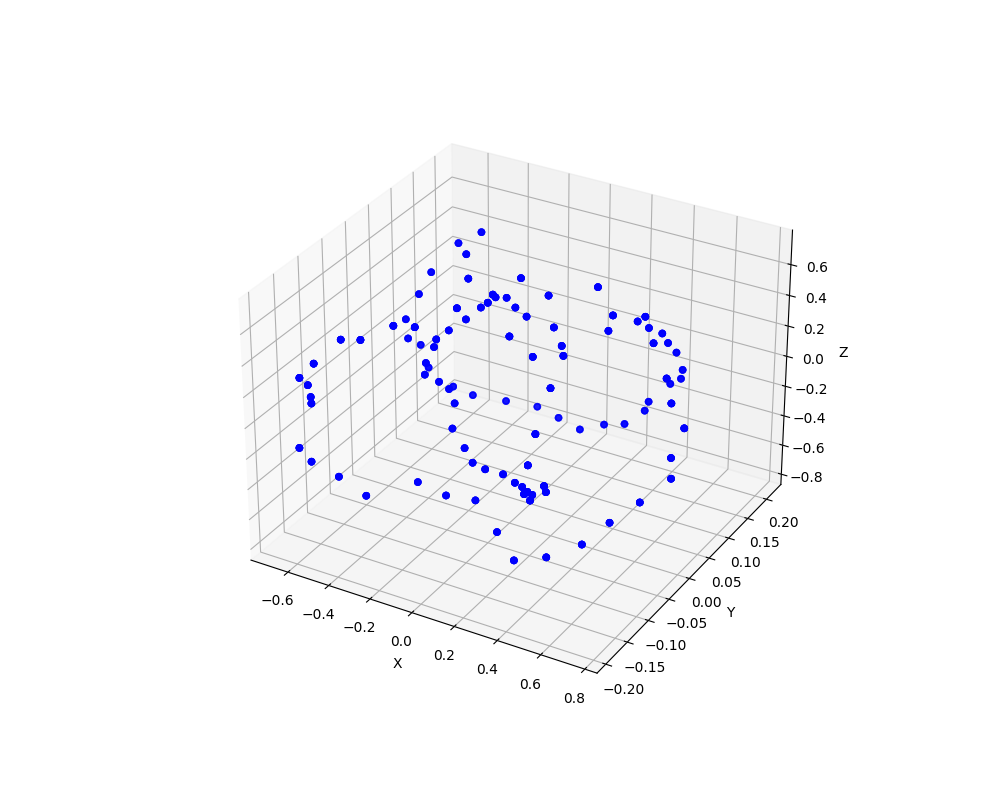} \\
		\midrule
		Run 13 & Run 33 & Run 15 \\
		\midrule
		MSE loss 0.0021 & MSE loss 0.0025 & MSE loss 0.0026 \\
		Topo loss 0.2695 & Topo loss 0.2698 & Topo loss 0.3142 \\
		Total loss 0.0290 & Total loss 0.0295 & Total loss 0.0362 \\
		\addlinespace[1em]
		\includegraphics[width=0.25\textwidth]{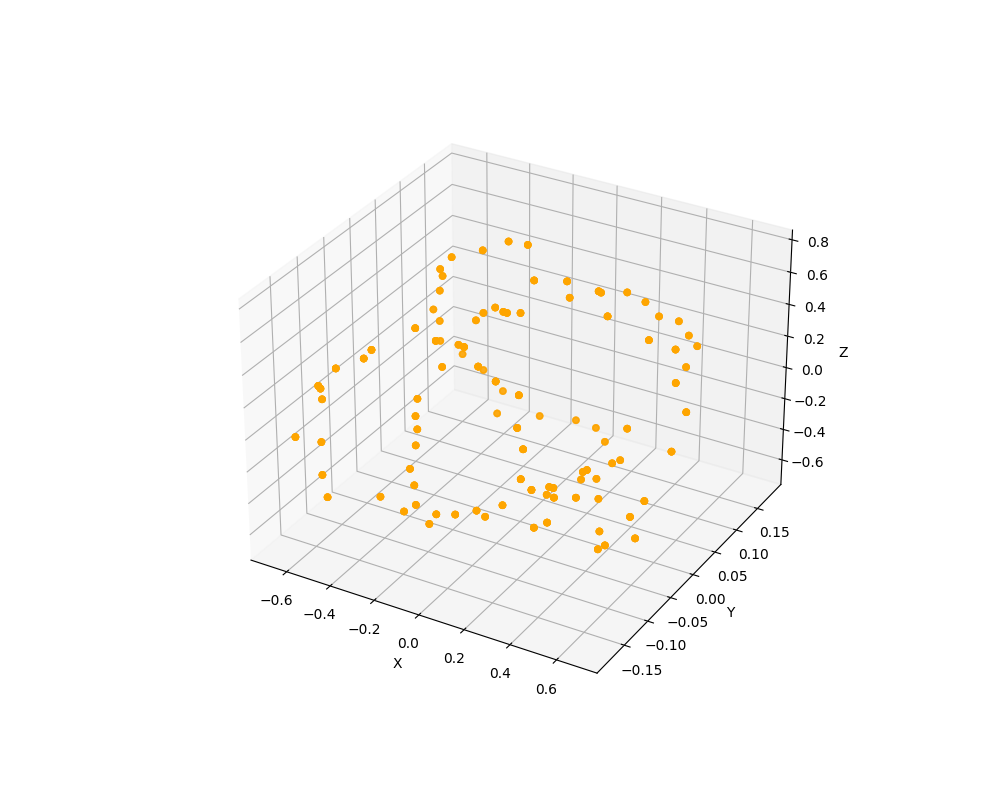} &
		\includegraphics[width=0.25\textwidth]{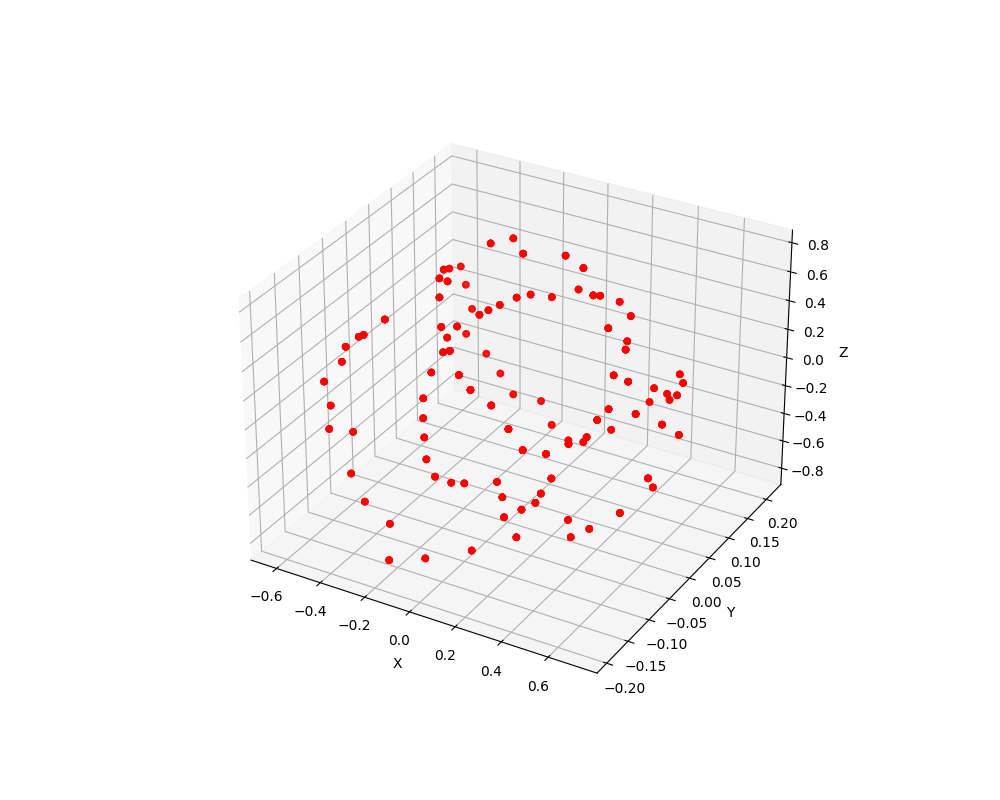} &
		\\
		\midrule
		Run 21 & Run 28 & \\
		\midrule
		MSE loss 0.0033 & MSE loss 0.0026 & \\
		Topo loss 0.2840 & Topo loss 0.2919 & \\
		Total loss 0.0450 & Total loss 0.0318 & \\
	\end{tabular}
	\caption{Top row: Runs with lowest MSE loss for the topological autoencoder. Bottom row: Outputs with best topological and total loss.}
	\label{fig:topomod3_visuals}
\end{figure}

\newpage

\subsection{RTD Autoencoder}
For the RTD autoencoder, the three runs with the lowest MSE loss depicted in Figure~\ref{fig:rtd_table_visuals} exhibit torsion. However, also a minimum RTD loss does not guarantee reconstructability of torsional structures.

\begin{figure}[H]
	\centering
	\renewcommand{\arraystretch}{1.2}
	\setlength{\tabcolsep}{10pt}
	\begin{tabular}{ccc}
		\includegraphics[width=0.25\textwidth]{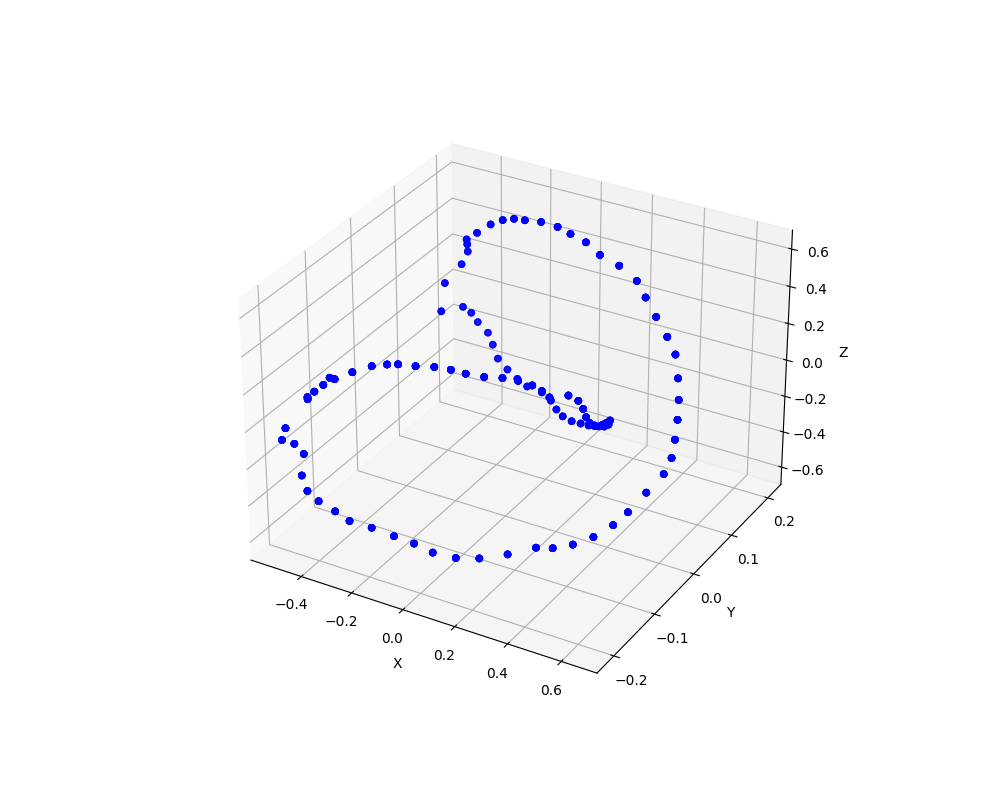} &
		\includegraphics[width=0.25\textwidth]{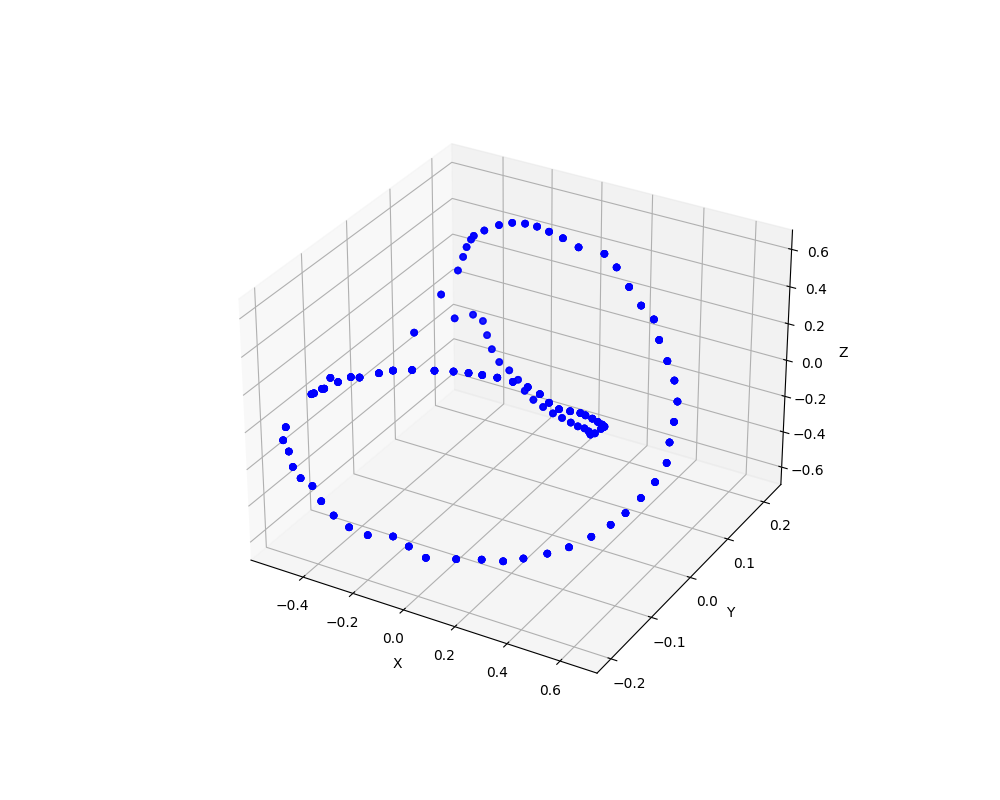} &
		\includegraphics[width=0.25\textwidth]{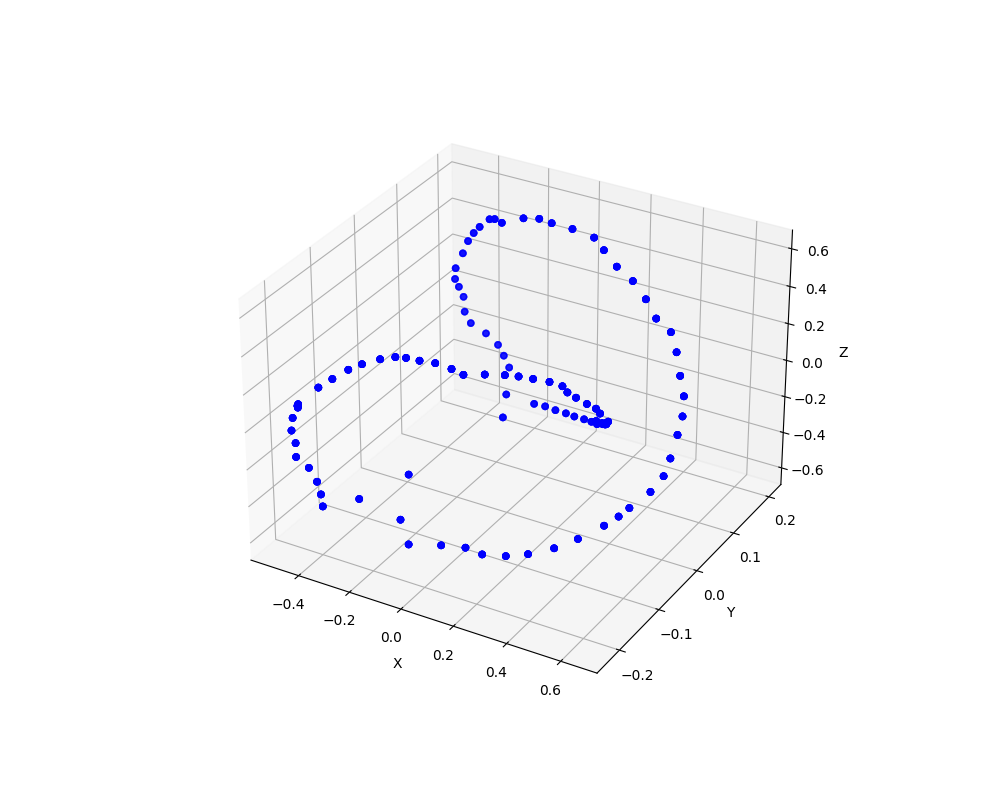} \\
		\midrule
		Run 25 & Run 39 & Run 35 \\
		\midrule
		MSE loss 0.000066 & MSE loss 0.000147 & MSE loss 0.000151 \\
		RTD loss 3.361837 & RTD loss 2.749048 & RTD loss 3.055326 \\
		Total loss 3.361903 & Total loss 2.749195 & Total loss 3.055477 \\
		\addlinespace[1em]
		\includegraphics[width=0.25\textwidth]{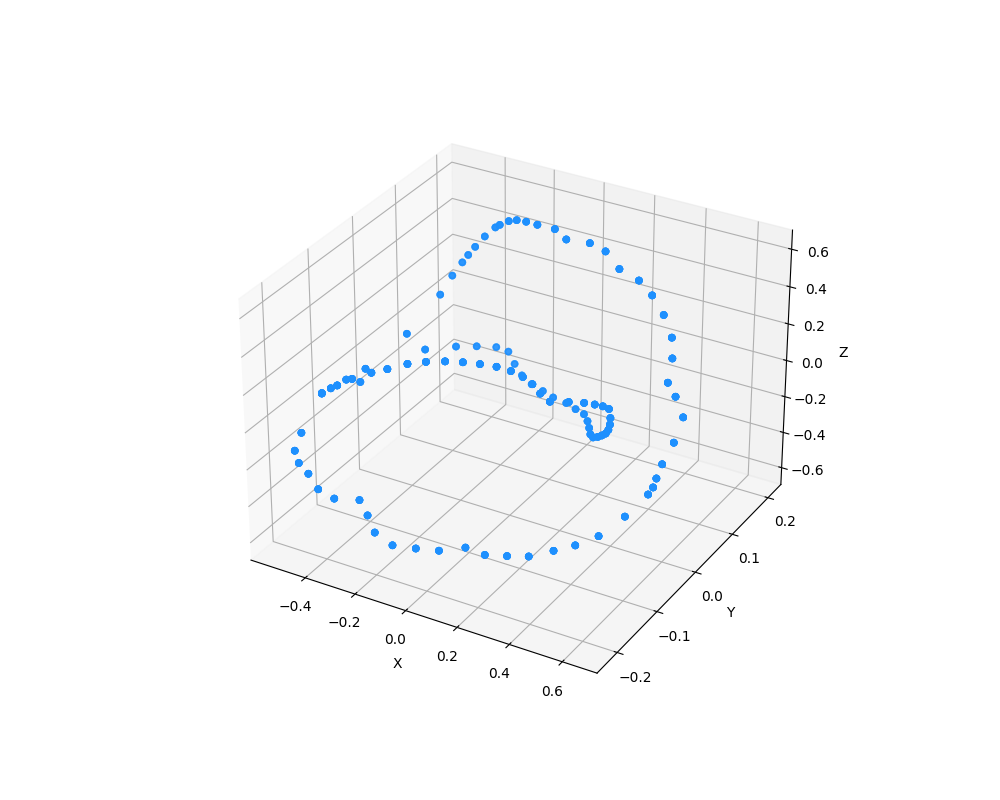} &
		\includegraphics[width=0.25\textwidth]{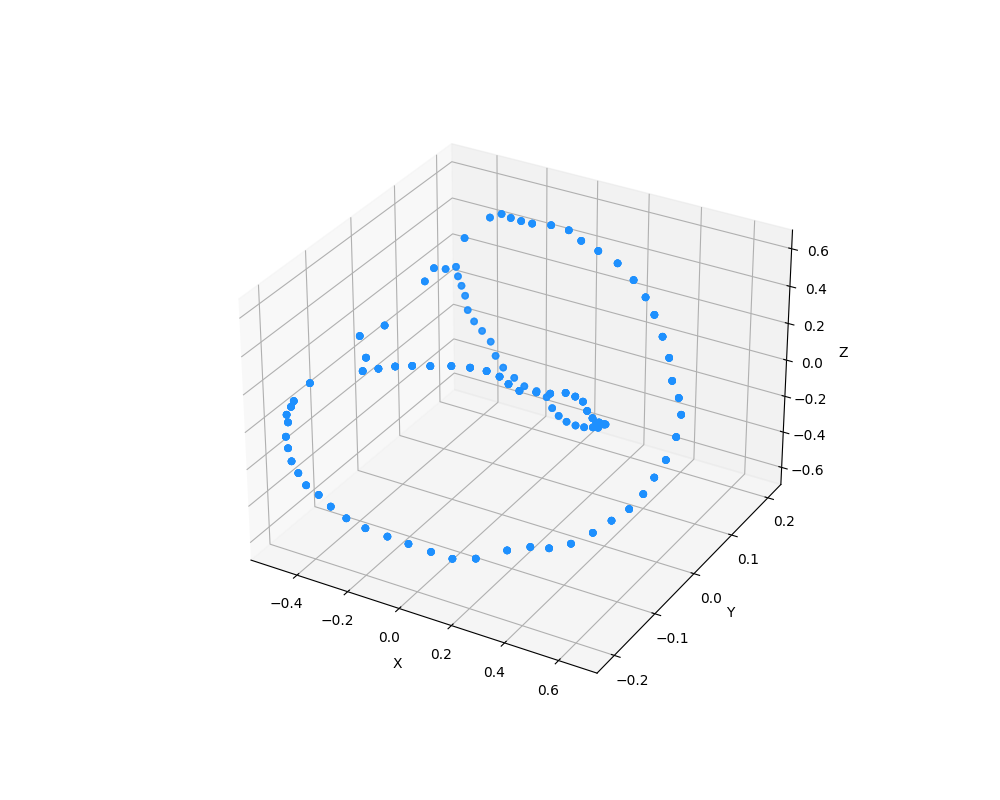} &
		\includegraphics[width=0.25\textwidth]{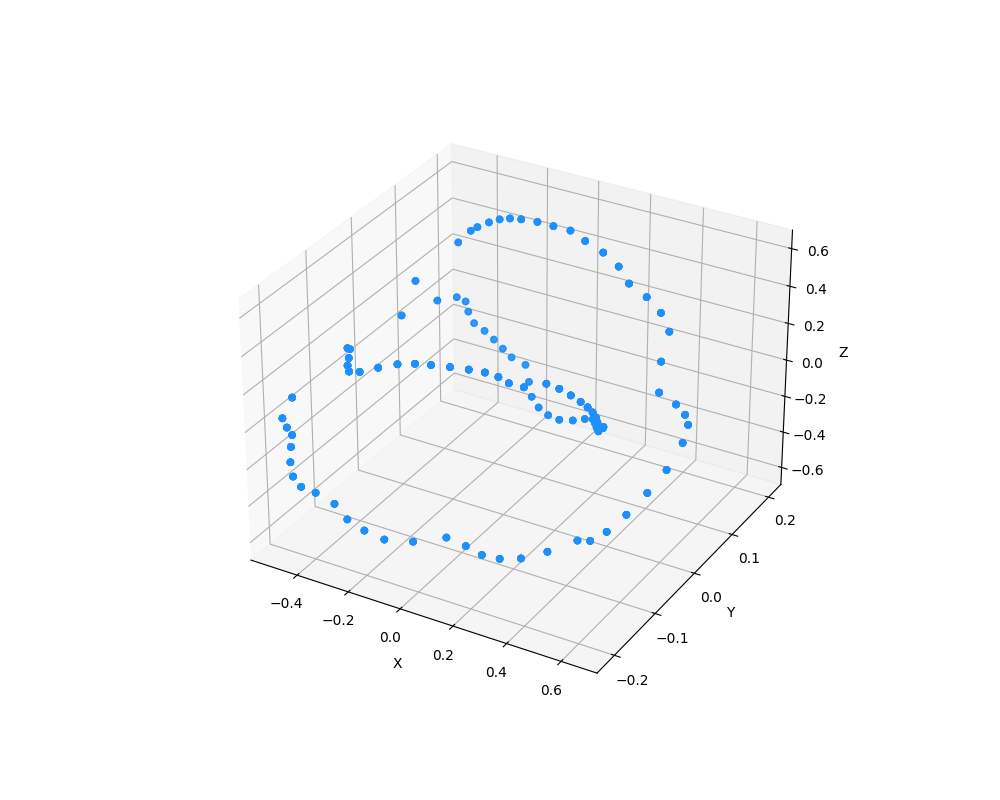} \\
		\midrule
		Run 9 & Run 4 & Run 1 \\
		\midrule
		MSE loss 0.000454 & MSE loss 0.000220 & MSE loss 0.000330 \\
		RTD loss 0.444102 & RTD loss 0.535345 & RTD loss 0.539877 \\
		Total loss 0.444556 & Total loss 0.535565 & Total loss 0.540207 \\
		\addlinespace[1em]
		\includegraphics[width=0.25\textwidth]{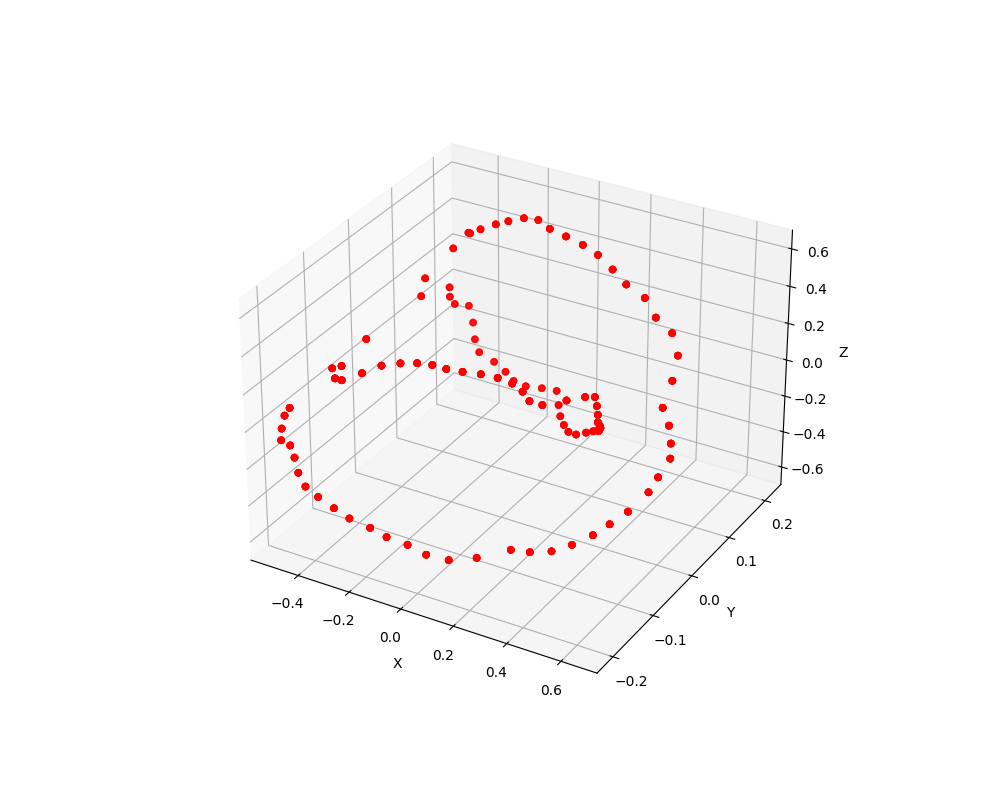} &
		& \\
		\midrule
		Run 10 & & \\
		\midrule
		MSE loss 0.000199 & & \\
		RTD loss 0.618562 & & \\
		Total loss 0.618761 & & \\
	\end{tabular}
	\caption{Top row: Runs with lowest MSE loss for the RTD autoencoder. Middle row: Runs with lowest RTD loss. Bottom row: Run with lowest total loss (MSE + RTD).}
	\label{fig:rtd_table_visuals}
\end{figure}

\noindent For the triple loop, also the RTD autoencoder could neither reconstruct the global structure nor torsion, as depicted in Figure~\ref{fig:bestrtdmod3}.

\begin{figure}[H]
	\centering
	\renewcommand{\arraystretch}{1.2}
	\setlength{\tabcolsep}{10pt}
	\begin{tabular}{ccc}
		\includegraphics[width=0.25\textwidth]{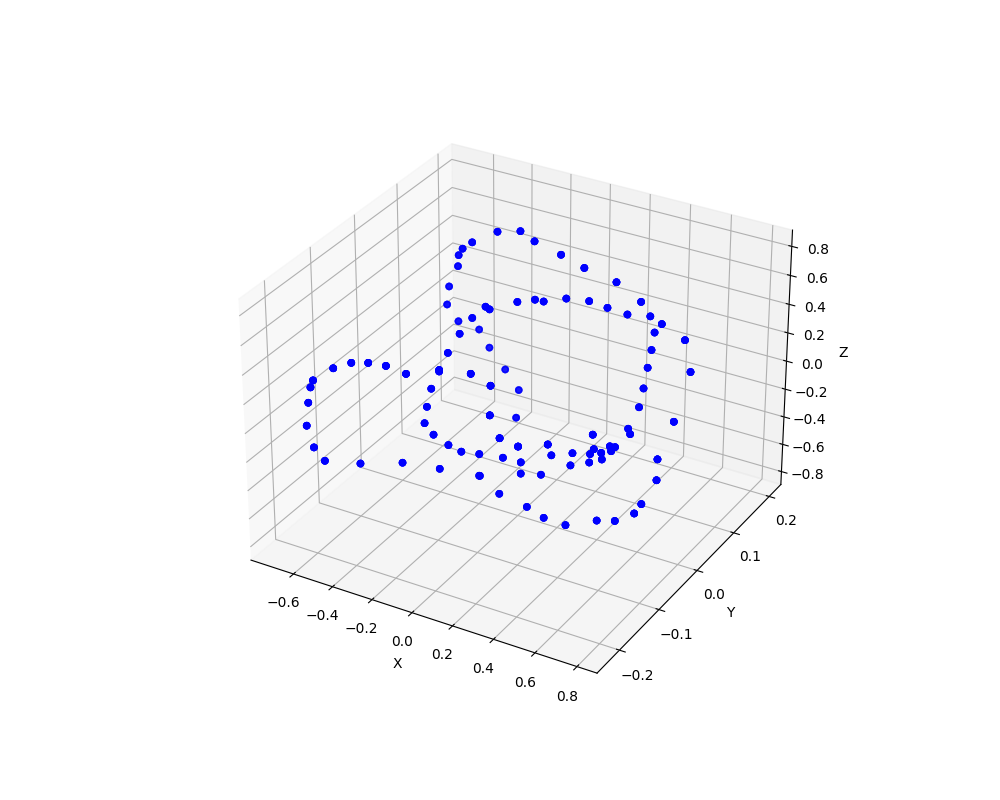} &
		\includegraphics[width=0.25\textwidth]{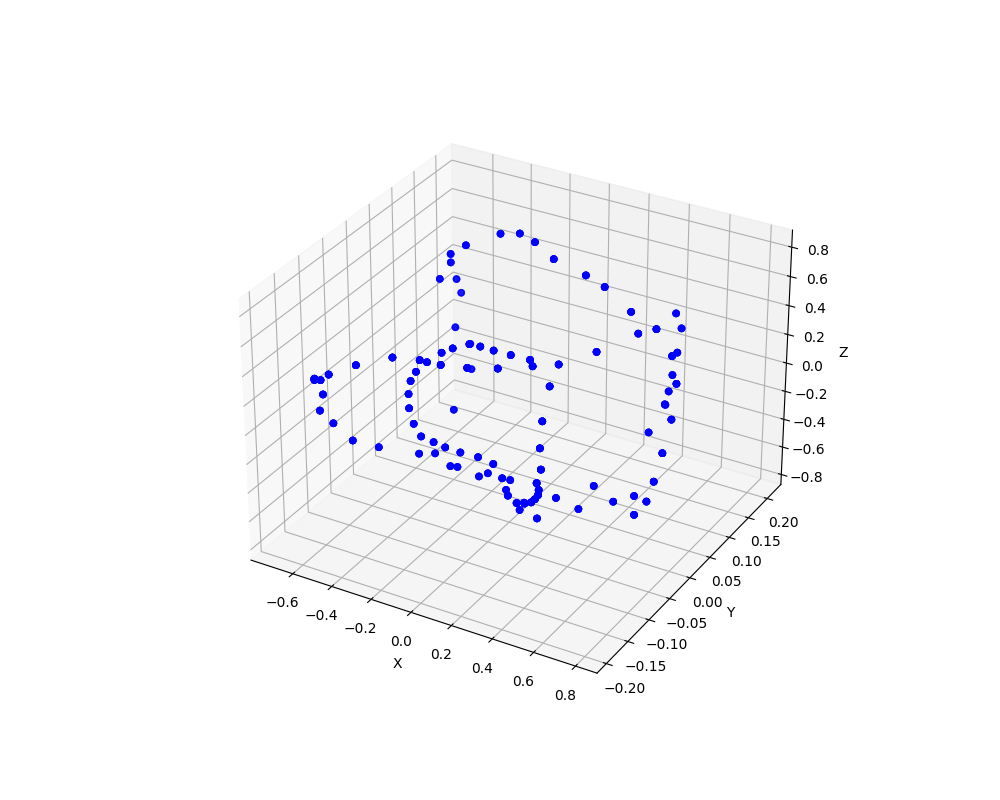} &
		\includegraphics[width=0.25\textwidth]{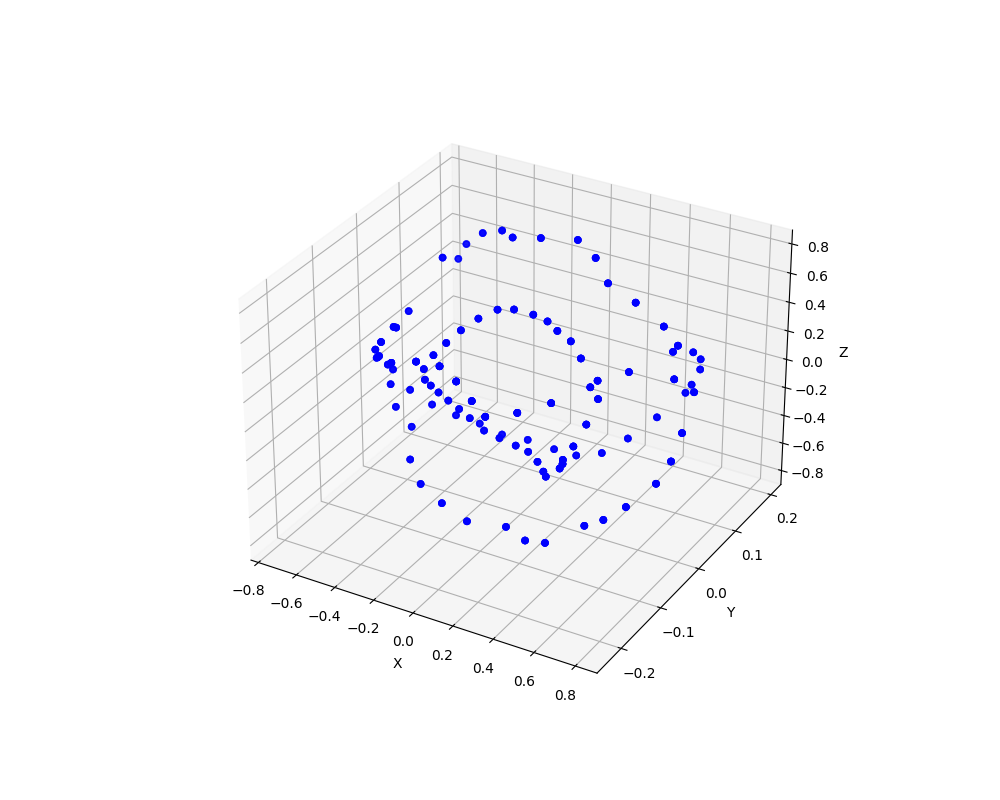} \\
		\midrule
		Run 28 & Run 11 & Run 12 \\
		\midrule
		MSE loss 0.002108 & MSE loss 0.002630 & MSE loss 0.002725 \\
		RTD loss 2.781865 & RTD loss 2.947375 & RTD loss 1.309629 \\
		Total loss 2.783973 & Total loss 2.950005 & Total loss 1.312354 \\
		\addlinespace[1em]
		\includegraphics[width=0.25\textwidth]{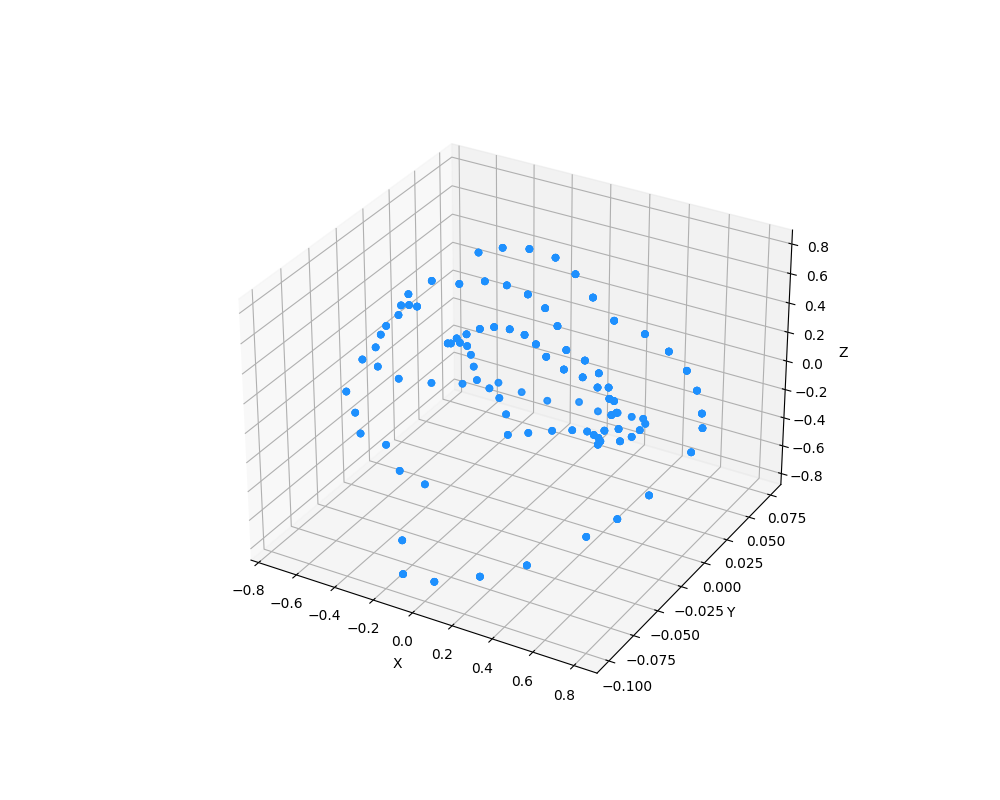} &
		\includegraphics[width=0.25\textwidth]{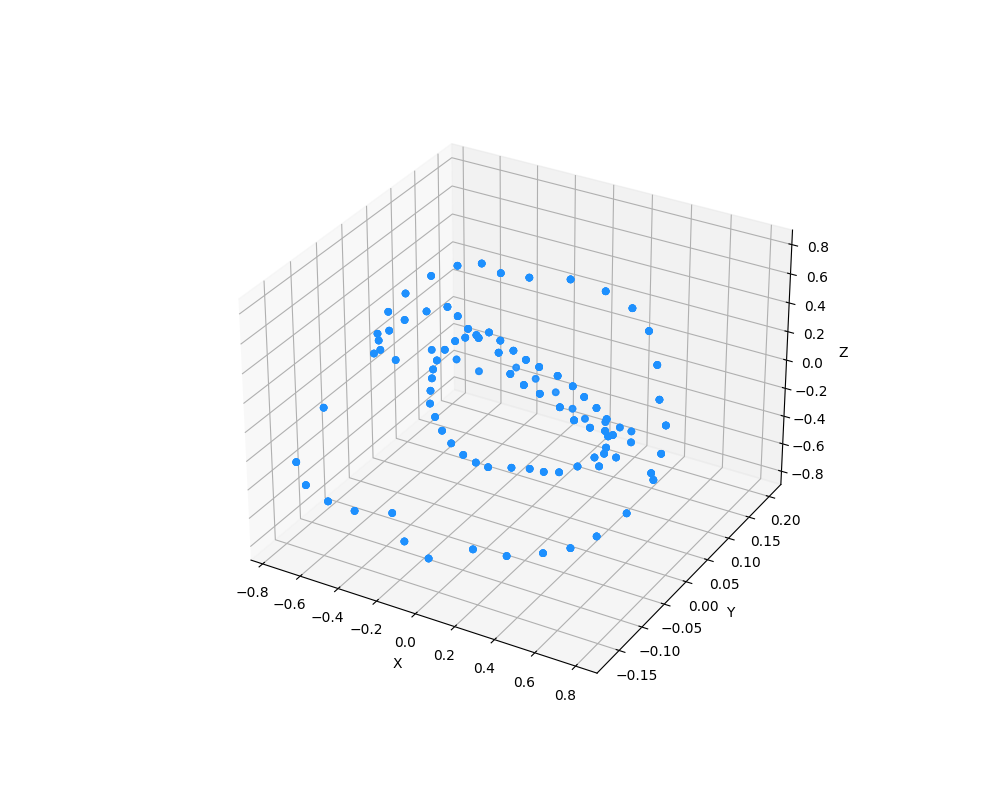} &
		\includegraphics[width=0.25\textwidth]{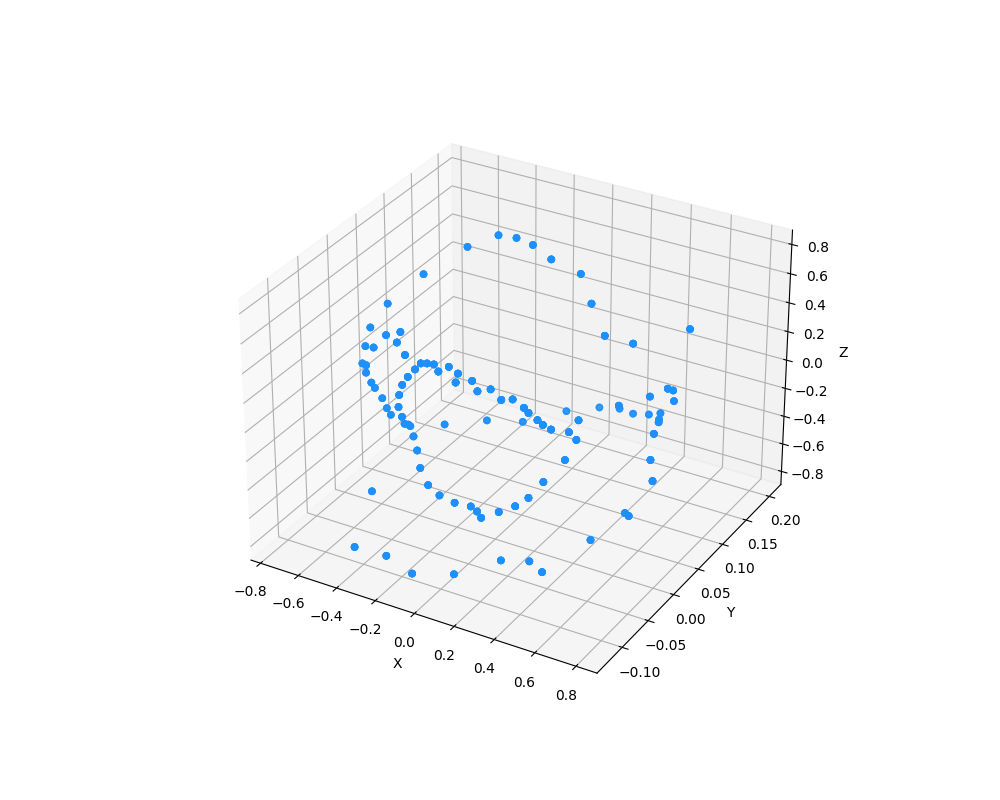} \\
		\midrule
		Run 10 & Run 7 & Run 30 \\
		\midrule
		MSE loss 0.005802 & MSE loss 0.003749 & MSE loss 0.003717 \\
		RTD loss 1.127720 & RTD loss 1.202779 & RTD loss 1.212620 \\
		Total loss 1.133522 & Total loss 1.206528 & Total loss 1.216337 \\
		\addlinespace[1em]
		\includegraphics[width=0.25\textwidth]{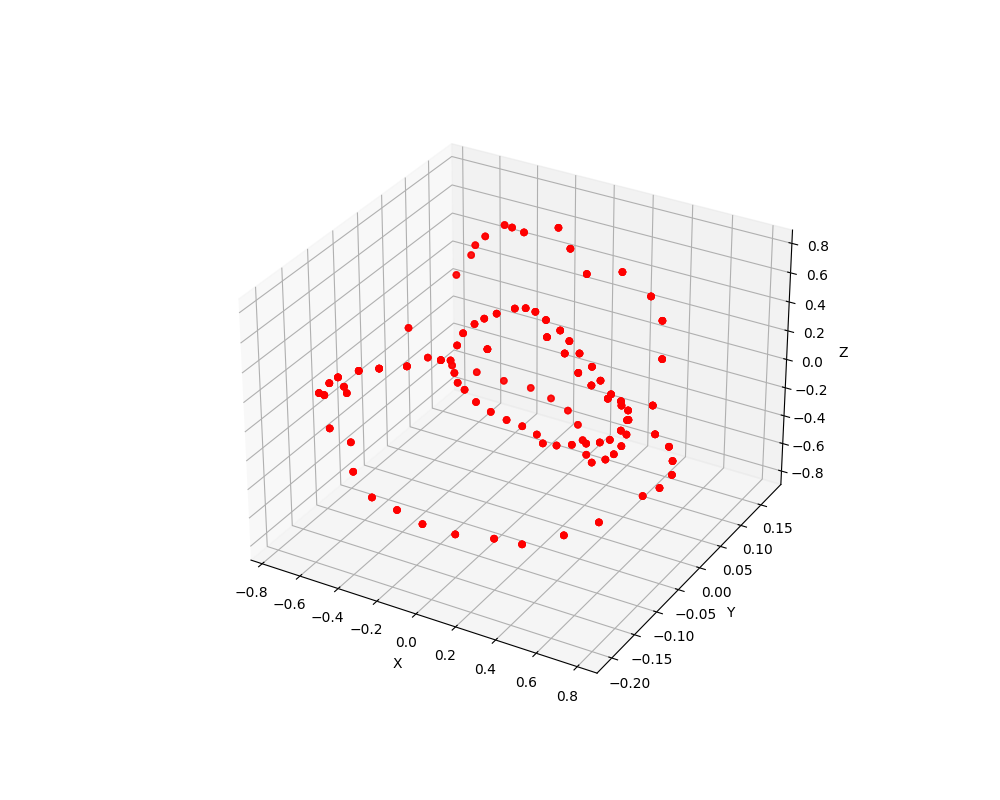} &
		& \\
		\midrule
		Run 8 & & \\
		\midrule
		MSE loss 0.003103 & & \\
		RTD loss 1.443048 & & \\
		Total loss 1.446151 & & \\
	\end{tabular}
	\caption{Top row: Runs with lowest MSE loss for the RTD-modified autoencoder. Middle row: Runs with lowest RTD loss. Bottom row: Run with lowest total loss (MSE + RTD).}
	\label{fig:bestrtdmod3}
\end{figure}

\clearpage

\section{Hyperparameters for High-Dimensional Point Cloud Reconstructions}
To ensure optimal reconstruction performance on the high-dimensional torsional point cloud data, we conducted a systematic hyperparameter search using the Optuna framework. The optimization targeted the minimization of the validation mean squared error (MSE) over 30 trials, each training a multi-layer perceptron autoencoder with varying architectural and training parameters.

\subsection{$d=10$}
Table~\ref{tab:combined-hyperparams-10d} summarizes the best hyperparameter configuration found during the hyperparameter optimization process for $10d$ input data and outlines the full search space explored. The optimal model used a latent dimension of 8, in line with our fixed dimensionality constraint. It employed moderately sized hidden layers and a relatively higher learning rate compared to the $13d$ setting, reaching a validation MSE of $1.19 \times 10^{-2}$ after 50 training epochs.

\begin{table}[H]
	\centering
	\caption{Optimal hyperparameters and corresponding search space for 10D torsional point cloud data}
	\label{tab:combined-hyperparams-10d}
	\begin{tabular}{@{}lll@{}}
		\toprule
		\textbf{Hyperparameter} & \textbf{Optimal Value} & \textbf{Search Space} \\
		\midrule
		Latent dimension        & 8                        & \{2, 3, 4, 5, 6, 7, 8\} \\
		Hidden layer 1 size     & 128                      & \{64, 128, 256\} \\
		Hidden layer 2 size     & 64                       & \{32, 64, 128\} \\
		Learning rate           & 0.001366                 & log-uniform $[10^{-4}, 10^{-2}]$ \\
		Weight decay            & $2.43 \times 10^{-5}$    & log-uniform $[10^{-6}, 10^{-3}]$ \\
		Batch size              & 32                       & \{32, 64, 80, 128\} \\
		\addlinespace
		\textbf{Validation MSE} & \textbf{0.011936}        & --- \\
		\bottomrule
	\end{tabular}
\end{table}

\subsection{$d=13$}

Table~\ref{tab:combined-hyperparams-13d} summarizes the best hyperparameter configuration found during the hyperparameter optimization process for $13d$ input data and outlines the full search space explored. The optimal model used a latent dimension of 32; however, we manually fixed the latent dimension to $8$ in our final setup. Additionally, the model employed relatively small hidden layers and a modest learning rate, achieving a validation MSE of $1.7 \times 10^{-5}$ after 50 training epochs.
\begin{table}[H]
	\centering
	\caption{Optimal hyperparameters and corresponding search space for 13D torsional point cloud data}
	\label{tab:combined-hyperparams-13d}
	\begin{tabular}{@{}lll@{}}
		\toprule
		\textbf{Hyperparameter} & \textbf{Optimal Value} & \textbf{Search Space} \\
		\midrule
		Latent dimension        & 32                      & \{4, 8, 16, 32\} \\
		Hidden layer 1 size     & 64                      & \{64, 128, 256\} \\
		Hidden layer 2 size     & 32                      & \{32, 64, 128\} \\
		Learning rate           & 0.00042818              & log-uniform $[10^{-4}, 10^{-2}]$ \\
		Weight decay            & $1.21 \times 10^{-5}$   & log-uniform $[10^{-6}, 10^{-3}]$ \\
		Batch size              & 128                     & \{32, 64, 80, 128\} \\
		\addlinespace
		\textbf{Validation MSE} & \textbf{0.000017}       & --- \\
		\bottomrule
	\end{tabular}
\end{table}

\section{Reconstructability of Torsion in High Dimensional Point Clouds}

The following tables report the minimum values of the MSE, topological, RTD, and total losses, along with an indication of whether torsion was detected in the output, for the 10-dimensional and 13-dimensional inputs discussed in Section~\ref{sec:highdpc}.

\subsection{Topological Autoencoder}
\begin{table}[H]
	\centering
		\begin{tabular}{|c|c|c|c|c|}
			\hline
			\textbf{Trial} & \cellcolor{orange!40}\textbf{MSE Loss} & \cellcolor{yellow!40}\textbf{Topo Loss} & \cellcolor{red!40}\textbf{Total Loss} & \textbf{Torsion} \\
			\hline
			\textbf{472} & 0.0164 & -      & -        & No Torsion \\
			\textbf{649} & 0.0164 & -      & -        & \textbf{Torsion: (2, 19485)} \\
			\textbf{626} & 0.0169 & -      & -        & No Torsion \\
			\hline
			\hline
			\textbf{681} & -      & 4.8263 & -        & No Torsion \\
			\textbf{119} & -      & 4.9249 & -        & No Torsion \\
			\textbf{478} & -      & 4.9636 & -        & No Torsion \\
			\hline
			\hline
			\textbf{472} & -      & -      & 0.0168   & No Torsion \\
			\textbf{649} & -      & -      & 0.0168   & \textbf{Torsion: (2, 19485)} \\
			\textbf{626} & -      & -      & 0.0174   & No Torsion \\
			\hline
		\end{tabular}
	\caption{Three trials with the lowest MSE, topological, and total loss values for $10d$ data $\eta = 0.000082$. Torsional trials are highlighted in bold.}
	\label{tab:topo10d}
\end{table}

\begin{table}[ht]
	\centering
		\begin{tabular}{|c|c|c|c|c|}
			\hline
			\textbf{Trial} & \cellcolor{orange!40}\textbf{MSE Loss} & \cellcolor{yellow!40}\textbf{Topo Loss} & \cellcolor{red!40}\textbf{Total Loss} & \textbf{Torsion} \\
			\hline
			\textbf{430} & 0.0325 & -      & -       & No Torsion \\
			\textbf{87}  & 0.0326 & -      & -       & No Torsion \\
			\textbf{119} & 0.0328 & -      & -       & No Torsion \\
			\hline
			\hline
			\textbf{516} & -      & 5.7202 & -       & \textbf{Torsion: (2, 18140)} \\
			\textbf{252} & -      & 6.1430 & -       & No Torsion \\
			\textbf{374} & -      & 6.2096 & -       & No Torsion \\
			\hline
			\hline
			\textbf{430} & -      & -      & 0.0331  & No Torsion \\
			\textbf{87}  & -      & -      & 0.0332  & No Torsion \\
			\textbf{119} & -      & -      & 0.0333  & No Torsion \\
			\hline
		\end{tabular}
	
	\caption{Three trials with the lowest MSE, topological, and total loss values for $13d$ data $\eta = 0.000091$.  Torsional trials are highlighted in bold.}
	\label{tab:topo13d}
\end{table}

\newpage
\subsection{RTD Autoencoder}

\begin{table}[htb]
	\centering
		\begin{tabular}{|c|c|c|c|c|}
			\hline
			\textbf{Trial} & \cellcolor{orange!40}\textbf{MSE Loss} & \cellcolor{cyan!30}\textbf{RTD Loss} & \cellcolor{red!40}\textbf{Total Loss} & \textbf{Torsion} \\
			\hline
			\textbf{478} & 0.0141 & -       & -        & No Torsion \\
			\textbf{227} & 0.0142 & -       & -        & No Torsion \\
			\textbf{794} & 0.0143 & -       & -        & No Torsion \\
			\hline
			\hline
			\textbf{794} & -      & 34.3383 & -        & No Torsion \\
			\textbf{710} & -      & 34.6614 & -        & \textbf{Torsion: (2, 18131)} \\
			\textbf{544} & -      & 34.7148 & -        & No Torsion \\
			\hline
			\hline
			\textbf{478} & -      & -       & 0.0145   & No Torsion \\
			\textbf{227} & -      & -       & 0.0146   & No Torsion \\
			\textbf{794} & -      & -       & 0.0147   & No Torsion \\
			\hline
		\end{tabular}
	
	\caption{Three trials with the lowest MSE, RTD, and total loss values for $\chi = 0.000011$. Torsional trials are highlighted in bold.}
	\label{tab:rtdloss10d}
\end{table}

\begin{table}[htb]
	\centering
		\begin{tabular}{|c|c|c|c|c|}
			\hline
			\textbf{Trial} & \cellcolor{orange!40}\textbf{MSE Loss} & \cellcolor{cyan!30}\textbf{RTD Loss} & \cellcolor{red!40}\textbf{Total Loss} & \textbf{Torsion} \\
			\hline
			\textbf{614} & 0.0271 & -       & -        & No Torsion \\
			\textbf{399} & 0.0274 & -       & -        & No Torsion \\
			\textbf{817} & 0.0279 & -       & -        & \textbf{Torsion: (2, 17840)} \\
			\hline
			\hline
			\textbf{455} & -      & 52.7213 & -        & \textbf{Torsion: (2, 15519)} \\
			\textbf{453} & -      & 53.7707 & -        & No Torsion \\
			\textbf{119} & -      & 54.7875 & -        & No Torsion \\
			\hline
			\hline
			\textbf{614} & -      & -       & 0.0282   & No Torsion \\
			\textbf{399} & -      & -       & 0.0285   & No Torsion \\
			\textbf{817} & -      & -       & 0.0289   & \textbf{Torsion: (2, 17840)} \\
			\hline
		\end{tabular}
	
	\caption{Three trials with the lowest MSE, RTD, and total loss values for $\chi = 0.000019$. Torsional trials are highlighted in bold.}
	\label{tab:rtdloss13d}
\end{table}

\end{document}